\definecolor{darkred}{rgb}{.75,0,0}
\definecolor{darkgreen}{rgb}{0, .6, 0}
\newcommand{\be}{\begin{equation}}
\newcommand{\ee}{\end{equation}}
\newcommand{\defi}[1]{\textbf{#1}}
\newcommand{\ds}{\displaystyle}
\newcommand{\iden}{\mathbbm{1}}
\newcommand{\id}{\varepsilon}
\newcommand{\landslide}{Landslide sandpile model}
\newcommand{\In}{\mathrm{In}}
\newcommand{\LL}{{\mathscr L}}
\newcommand{\ov}[1]{\ensuremath{\overline {#1}}}
\newcommand{\Out}{\mathrm{Out}}
\renewcommand{\P}{\mathbb{P}}
\newcommand{\rootnode}{{\mathsf{r}}}
\newcommand{\su}{\mathop{\boldsymbol{s}}}
\newcommand{\source}{\sigma}
\newcommand{\tree}{\mathcal{T}}
\newcommand{\trickle}{Trickle-down sandpile model}
\newcommand{\wreath}{\gamma}
\newcommand{\Z}{\mathbb{Z}}
\newcommand{\dom}{\unlhd}
\newcommand{\RR}{{\mathscr R}}
\newcommand{\JJ}{{\mathscr J}}
\newcommand{\suchthat}{\mid}
\newskip\@bigflushglue \@bigflushglue = -100pt plus 1fil
\def\bigcentering{\let\\\@centercr\rightskip\@bigflushglue%
\leftskip\@bigflushglue
\parindent\z@\parfillskip\z@skip}
\newtheorem{thm}{Theorem}[section]
\newtheorem{cor}[thm]{Corollary}
\newtheorem{lem}[thm]{Lemma}
\newtheorem{prop}[thm]{Proposition}
{\theoremstyle{definition}
}
{\theoremstyle{remark}
\newtheorem{rem}[thm]{Remark}}
\newtheorem{conj}[thm]{Conjecture}
{\theoremstyle{remark}
\newtheorem{eg}[thm]{Example}}
{\theoremstyle{remark}
}
\numberwithin{equation}{section}
\begin{document}

\title{Directed Nonabelian Sandpile Models on Trees}

\author[A. Ayyer]{Arvind Ayyer}
\curraddr{Department of Mathematics, Department of Mathematics, Indian Institute of Science, Bangalore - 560012, India.}
\email{arvind@math.iisc.ernet.in}

\address{Department of Mathematics, UC Davis, One Shields Ave., Davis, CA 95616-8633, U.S.A.}

\author[A. Schilling]{Anne Schilling}
\address{Department of Mathematics, UC Davis, One Shields Ave., Davis, CA 95616-8633, U.S.A.}
\email{anne@math.ucdavis.edu}

\author[B. Steinberg]{Benjamin Steinberg}
\address{Department of Mathematics, City College of New York, Convent Avenue at 138th Street,
New York, NY 10031, U.S.A.}
\email{bsteinberg@ccny.cuny.edu}

\author[N. M. Thi\'ery]{Nicolas M.~Thi\'ery}
\address{Univ Paris-Sud, Laboratoire de Math\'ematiques d'Orsay,
  Orsay, F-91405; CNRS, Orsay, F-91405, France}
\email{Nicolas.Thiery@u-psud.fr}

\begin{abstract}
We define two general classes of nonabelian sandpile models on
directed trees (or arborescences), as models of nonequilibrium
statistical physics. 
Unlike usual applications of the well-known abelian sandpile model,
these models have the property that sand grains
can enter only through specified reservoirs.

In the \trickle, sand grains are allowed to move one at a time.  For
this model, we show that the stationary distribution is of product
form.  In the \landslide, all the grains at a vertex topple at once,
and here we prove formulas for all eigenvalues, their multiplicities,
and the rate of convergence to stationarity.  The proofs use wreath
products and the representation theory of monoids.
\end{abstract}

\date{\today}

\maketitle

\section{Introduction}

Abelian sandpile models (ASMs) form one of the best understood classes of
models in statistical physics motivated by the problem of
understanding self-organized criticality~\cite{btw1987}. They can be
defined for any graph, directed or otherwise.  The models are
stochastic and simple to describe.  At any given time, each vertex of
the graph contains a certain number of grains of sand less than its
degree (outdegree in case of a directed graph).  At each time step, a
grain of sand is added to a random vertex.  If the number of grains is
still less than its degree, this is the new configuration.  On the
other hand if, as a result, the number of grains at that vertex
becomes more than its degree, then the vertex is said to be unstable.
It then topples, giving one grain to each of its neighbors along the
edges. If more vertices become unstable as a result, they too topple.
The model is defined by generators which describe the toppling for
each vertex.  The remarkable property of the abelian sandpile model is
that these generators commute. This makes the models particularly
amenable to computations of interest to physicists such as the
distribution of avalanches. For physically motivated reviews of
self-organized criticality and the abelian sandpile model,
see~\cite{dhar1990,ip1998,dhar1999}.

The model was also introduced around the same time by mathematicians under the 
name of chip-firing-games on graphs~\cite{bls1991}. ASMs on a graph are naturally 
related to other structures on the graph such as its sandpile group (also known as its
critical group) \cite{biggs1999}, spanning trees and the Tutte
polynomial~\cite{coriborgne2003}. For mathematically oriented reviews
of the abelian sandpile model, see~\cite{hlmppw2008}
and~\cite[Appendix]{postnikov.shapiro.2004}.

Nonequilibrium statistical physics largely deals with the study of systems 
in contact with (infinite) reservoirs. The classical example of such a system is a metal
bar, both of whose ends are kept at different temperatures by means of baths (i.e. reservoirs). 
Although it is clear that heat will flow from the higher temperature reservoir to the lower one, 
specific statistical properties are not known. In fact, very few universal laws are known for such 
systems. It is therefore of great interest to understand toy examples of such dynamical systems 
in detail. We will model such systems by irreversible Markov chains, where there is a clear 
direction of the flow of particles.

We are interested in understanding real {\em finite} systems which interact with reservoirs only 
at the boundary, such as the metal bar example above. By that, we mean that we would not 
only like to understand the stationary distribution of these models, but also transient quantities, 
such as the time they take to reach the stationary distribution. We would also like our model to 
be {\em generic} in the sense that hopping rates for the particles are not chosen so that miraculous 
simplifications occur, and the model becomes tractable. In other words, we want our model to have 
``disordered'' hopping rates. Lastly, we would like to prove rigorous statements about the behavior 
of the models. The standard ASM fails the reservoir criterion because grains are usually added to 
all sites, not just at the boundary.  One could force grains to be added only at the 
reservoirs, but the resulting Markov chain could then fail to be ergodic. Even if that were not the case, 
all the dynamics happens because of grains being added externally and there is no intrinsic 
bulk motion, which is what we are interested in studying here.

There are exceptionally few Markov processes which satisfy all the conditions listed above 
and are not one-dimensional. 
Some examples known to us are the Manna model \cite{manna1991,dhar-manna1999},
the stochastic sandpile model \cite{sd2009}, the asymmetric annihilation process \cite{ayyer_strehl_2010}, 
the asymmetric Glauber model \cite{ayyer_2011} and the de-Bruijn process \cite{ayyer_strehl_2013}. 
But these are also one-dimensional. There are very few nontrivial models of nonequilibrium statistical physics 
with reservoirs in higher dimensions where rigorous results are known, but most do 
not seem to be disordered; some examples are given in \cite{dhar2006}. 
We will present results for directed trees, which can be thought of as
quasi two-dimensional objects since they can be embedded in the plane. 

We introduce two new kinds of sandpile models on arbitrary rooted trees,
which we call the \defi{\trickle{}} and the \defi{\landslide}.
Although neither model is abelian, they have beautiful
properties. The stationary distribution of the \trickle{} has a product  form,
which means that the height distributions are independent and there are no correlations.
The \landslide{} has a remarkably simple formula for the
eigenvalues of the transition matrix and their multiplicities.
It also has a fast mixing time (i.e. convergence to steady state) 
which is approximately proportional to the square of
the size of the rooted tree. The underlying basis for this fact, explained below, is
the notion of $\RR$-triviality. This idea is a precise mathematical formulation of 
forgetfulness of the initial distribution for a Markov chain.
Some other examples of ``$\RR$-trivial'' statistical physical models are given
in~\cite{ayyer_strehl_2010,ayyer_2011,ayyer_strehl_2013}. The stationary distribution of the \landslide{} is nontrivial. 
It would be very interesting to calculate physically relevant quantities such as average avalanche sizes, their 
exponents, and various correlation functions.

A novel element of this paper is also our techniques. Influential work
of Diaconis~\cite{DiaconisLectures} and others, going back to the
eighties, has made the character and representation theory of
finite groups extremely relevant to the analysis of Markov chains.  In
groundbreaking work, Bidigare, Hanlon and Rockmore~\cite{BHR}
introduced the new technique of monoid representation theory into the
study of Markov chains and showed how this theory leads to an elegant
analysis of the eigenvalues for Markov chains like the Tsetlin library
and riffle-shuffling.  This approach was further developed by Brown
and Diaconis~\cite{DiaconisBrown}, Brown~\cite{BrownLRB},
Bj\"orner~\cite{bjorner1,bjorner2}, and Chung and
Graham~\cite{GrahamChung}.  The types of monoids used in this theory
are fairly restrictive and Diaconis asked in his 1998 ICM address how
far the monoid techniques can be pushed~\cite{DiaconisICM}.  The third
author initiated a theory for random walks on more general monoids
in~\cite{steinberg.2006,steinberg.2008}.  The first two authors in collaboration with
Klee used these techniques to analyze Markov chains associated to
Sch\"utzenberger's promotion operators on posets~\cite{ayyer_klee_schilling.2012}. 
A multitude of further examples is presented
in~\cite{ayyer_schilling_steinberg_thiery.2013}. The results of these
papers, and this one, rely on the representation theory of the important
class of \defi{$\RR$-trivial monoids}~\cite{EilenbergB}. A key feature of $\RR$-trivial monoids is
that any matrix representation of an $\RR$-trivial monoid can be triangularized.
The point here is that eigenvalues for upper triangular matrices are particularly
easy to compute.

Another new feature in this paper is the use of self-similarity in the
\defi{wreath product} of monoids to analyze Markov chains, and in
particular to compute stationary distributions.  Such techniques have
already been used to great effect for analyzing random walks and the
spectrum of the discrete Laplacian on infinite
groups~\cite{GrigZuk,GNS,KSS}, but they have never before
been used in the monoid context or for finite state Markov chains.

As a side remark, we note that ASMs have also been studied from the
monoid point of view in~\cite{toumpakari2005}.

The paper is organized as follows. In Section~\ref{section.models} we
introduce both variants of the directed nonabelian sandpile model and
state the main results.  Section~\ref{section.monoid sandpile}
reformulates the directed nonabelian sandpile model in terms of wreath
products. In Section~\ref{section.stationary} we present the proofs
for the stationary distributions using the wreath product
approach. For the \trickle{} we provide another proof using a master
equation; it becomes clear that the wreath product approach is
superior in this setting.  Section~\ref{section.Rtrivial} gives a
proof of $\RR$-triviality for the monoid of the \landslide,
which yields the statements about the eigenvalues. We also prove the rate
of convergence.

\subsection*{Acknowledgments}
All the authors would like to thank ICERM, where part of this work was performed, for its hospitality.
AS was partially supported by NSF grants DMS--1001256, OCI--1147247,
and a grant from the Simons Foundation (\#226108 to Anne Schilling).
This work was partially supported by a grant from the Simons Foundation (\#245268
to Benjamin Steinberg).

We would like to thank D. Dhar, F. Bergeron and an anonymous referee for comments.

This research was driven by computer exploration using Maple${}^{{\text TM}}$,
{\sc Sage}~\cite{sage} and {\sc Sage-combinat}~\cite{sage-combinat}.
The {\sc Sage} code is available by request and the Maple package \texttt{NonabelianSandpiles.maple} can be downloaded
from the \texttt{arXiv} source.

\section{Definition of models and statement of results}
\label{section.models}

A tree is a graph without cycles. An \defi{arborescence}, or \defi{out-tree},
is a directed graph
with a special vertex called the \defi{root} such that there is exactly one
directed path from any vertex to the root.\footnote{
Note that many graph theorists prefer the opposite convention
for an arborescence where the path goes
from the root to any vertex (also known as an in-tree) \cite[Section 9.6]{deo1974}.
}
Note that an arborescence on $n$ vertices has exactly $n-1$ directed
edges.  Vertices of degree one in an arborescence are called
\defi{leaves}. An example is given in Figure~\ref{figure.arborescence}.

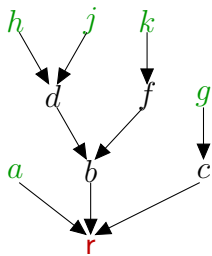
\begin{figure}
\begin{center}
\begin{tikzpicture} [>=triangle 45, x=0.5cm,y=0.5cm]
\draw (1,6) node {{\color{darkgreen}$h$}};
\draw (3,6) node {{\color{darkgreen}$j$}};
\draw (4.5,6) node {{\color{darkgreen}$k$}};
\draw (2,4) node {$d$};
\draw (4.5,4) node {$f$};
\draw (6,4) node {{\color{darkgreen}$g$}};
\draw (1,2) node {{\color{darkgreen}$a$}};
\draw (3,2) node {$b$};
\draw (6,2) node {$c$};
\draw (3,0) node {{\color{darkred}$\rootnode$}};
\draw [->] (1.1,5.7) -- (1.9,4.3);
\draw [->] (2.9,5.7) -- (2.1,4.3);
\draw [->] (4.5,5.7) -- (4.5,4.3);
\draw [->] (2.1,3.7) -- (2.9,2.3);
\draw [->] (4.4,3.7) -- (3.1,2.3);
\draw [->] (6,3.7) -- (6,2.3);
\draw [->] (1.1,1.7) -- (2.9,0.3);
\draw [->] (3,1.7) -- (3,0.3);
\draw [->] (5.9,1.7) -- (3.1,0.3);
\end{tikzpicture}
\caption{An arborescence with {\color{darkgreen} leaves} a, g, h, j, k and
{\color{darkred}root} $\rootnode$
\label{figure.arborescence}}
\end{center}
\end{figure}

We will now informally define our nonabelian sandpile models on
arborescences.  These models will be considered as (discrete-time)
Markov chains. This is a very well-developed theory, see
\cite{MarkovMixing}, for instance. A Markov chain can be thought of as
a random walk on an appropriate graph. For our purposes, we will need
the following facts. If the graph is \defi{strongly connected}, then
the Markov chain is \defi{recurrent}, meaning one can get from any
configuration to any other configuration. If in addition, there is a
single loop in the graph, then the chain is \defi{aperiodic} and it
converges exponentially fast to its unique stationary
distribution. The stationary distribution is, in our convention, the
right eigenvector with eigenvalue 1 of the transition matrix. The
eigenvector is normalized so that the sum of the entries is 1. We
call the normalization factor, albeit with some abuse of terminology,
the \defi{partition function}.

We will define two Markov chains on configurations of arborescences.
In both models, each vertex has a threshold, which is the maximum
number of sand grains that it can accommodate. Sand enters from the
leaves with a certain probability depending on the leaf, one at a
time, flow down the tree, and leave at the root.  Moreover, the
interior vertices can topple with a certain probability. The
difference in the models is in the way in which the interior vertices topple.

In the \trickle, the toppling at a vertex $v$ affects only one grain
of sand at that vertex. That grain moves from $v$, along the directed
path to the root $\rootnode$,
until it finds a vertex $w$ which does not have its threshold number
of sand grains, and settles there. In other words, the number of sand grains at $v$
reduces by 1 and those at $w$ increases by 1. If no such $w$ exists (i.e. all
vertices along the path are filled to capacity), the sand grain exits
the arborescence at the root.

In the \landslide, the toppling at $v$ removes all the grains of sand
at that vertex. These grains are then transferred systematically to the vertices
along the path from $v$ to $\rootnode$. If there are still some grains remaining at
the end, these grains leave the arborescence at the root. Note that if the vertex being
toppled is the root $\rootnode$, then all the sand grains at $\rootnode$ exit the
arborescence.

\begin{figure}[h]
\begin{center}
\begin{tikzpicture} [>=triangle 45, x=0.5cm,y=0.5cm]
\draw (0,4) node {$a$};
\draw (6,4) node {$b$};
\draw (3,0) node {$\rootnode$};
\draw [->] (0.3,3.7) -- (2.7,0.3);
\draw [->] (5.7,3.7) -- (3.3,0.3);
\end{tikzpicture}
\caption{Arborescence $\tree_3$ for Example~\ref{example.simple tree}
\label{figure.simple tree}}
\end{center}
\end{figure}

\begin{eg} \label{example.simple tree}
Let $\tree_{3}$ be the arborescence consisting of two leaves and a root,
shown in Figure~\ref{figure.simple tree}, with all thresholds equal to 1.
Note that both the \trickle{} and the \landslide{} are equivalent
in this case. There are 8 states in the Markov chain, which are given by
binary vectors of size 3, denoting the number of grains in vertices $a,b,\rootnode$
in that order. Let the probability for grains entering at vertices $a$ and $b$
be $y_a$ and $y_b$, respectively, and the probability for toppling at the nodes
be $x_a, x_b$ and $x_\rootnode$, with $y_a+y_b+x_a+x_b+x_\rootnode=1$. The graph for the Markov chain is given in Figure~\ref{figure.G tree3}

\begin{figure}
  \begin{bigcenter}
    $\vcenter{
      \includegraphics[width=.6\textwidth]{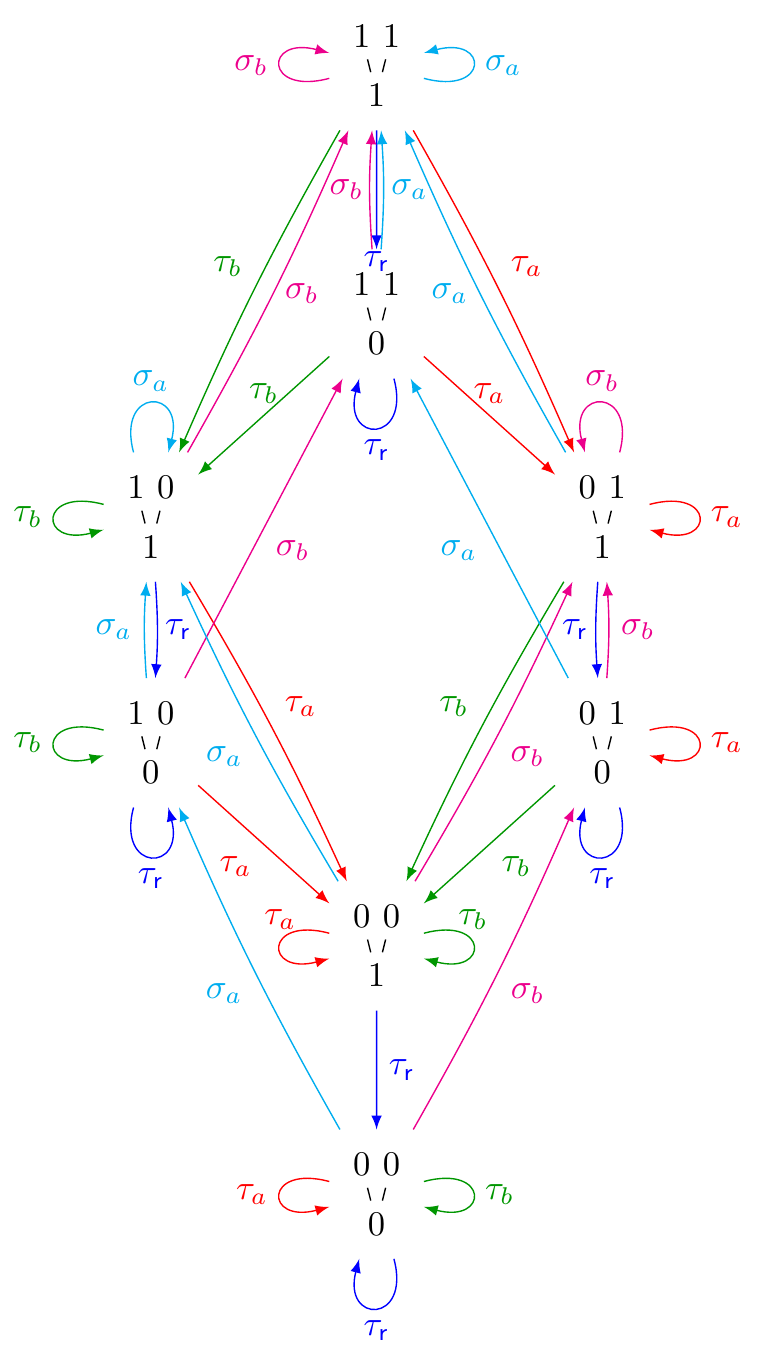}
    }$
    \end{bigcenter}
    \caption{The graph $G_\theta=G_\tau$ of the Markov chain with unit
      thresholds for the arborescence of Example~\ref{example.simple
        tree}.
    \label{figure.G tree3}
  }
\end{figure}

Our convention for the transition matrix for the chain $M$ is that $M_{i,j}$ is
the probability of going from state $j$ to state $i$ so that the
column sums are 1. The rows and columns of $M$ are labelled by the states in
lexicographic order, that is $\{000,001,010,011,100,101,110,111\}$,
\[
M =  \left(
\begin {array}{cccccccc}
*&x_{{{\rootnode}}}&0&0&0&0&0&0\\
\noalign{\medskip}0&*&x_{{b}}&x_{{b}}&x_{{a}}&x_{{a}}&0&0\\
\noalign{\medskip}y_{{b}}&0&*&x_{{{\rootnode}}}&0&0&0&0\\
\noalign{\medskip}0&y_{{b}}&y_{{b}}&*&0&0&x_{{a}}&x_{{a}}\\
\noalign{\medskip}y_{{a}}&0&0&0&*&x_{{{\rootnode}}}&0&0\\
\noalign{\medskip}0&y_{{a}}&0&0&y_{{a}}&*&x_{{b}}&x_{{b}}\\
\noalign{\medskip}0&0&y_{{a}}&0&y_{{b}}&0&*&x_{{{\rootnode}}}\\
\noalign{\medskip}0&0&0&y_{{a}}&0&y_{{b}}&y_{{a}}+y_{{b}}&*
\end {array}
\right),
\]
where the entries on the diagonal are such that column sums are 1. One
can verify that the nonzero entries in $M$ precisely correspond to the
directed arrows is Figure~\ref{figure.G tree3}.

The stationary distribution $\P$ is then the column (right)
eigenvector of $M$ with eigenvalue 1, properly normalized.
The probability for each state is then given by
\begin{align*}
\P(0,0,0)&={\frac {x_{{a}}x_{{b}}x_{{{\rootnode}}}}{ Z }}, \quad
\P(0,0,1)={\frac { x_{{a}}x_{{b}}\left( y_{{a}}+y_{{b}} \right) }{ Z }}, \\
\P(0,1,0)&={\frac {x_{{a}}y_{{b}}x_{{{\rootnode}}}}{ Z }}, \quad
\P(0,1,1)={\frac {x_{{a}} y_{{b}} \left( y_{{a}}+y_{{b}} \right) }{ Z }}, \\
\P(1,0,0)&={\frac {y_{{a}} x_{{b}} x_{{{\rootnode}}}}{ Z }}, \quad
\P(1,0,1)={\frac { y_{{a}}x_{{b}} \left( y_{{a}}+y_{{b}} \right)}{ Z }}, \\
\P(1,1,0)&={\frac {y_{{a}}y_{{b}}x_{{{\rootnode}}}}{ Z }}, \quad
\P(1,1,1)={\frac {y_{{a}} y_{{b}} \left( y_{{a}}+y_{{b}} \right) }{ Z }},
\end{align*}
where $Z$ is the normalization factor, often called the nonequilibrium
partition function,
\[
Z =  \left(x_{{a}}+y_{{a}} \right)  \left( x_{{b}}+y_{{b}} \right) \left( y_{{a}}+y_{{b}}+x_{{{\rootnode}}} \right).
\]
One can see that this is of product form. This property will generalize to the
\trickle{} for all arborescences.

The eigenvalues of $M$ are given by
\begin{align*}
&0, x_{{a}}, x_{{{\rootnode}}}, x_{{b}},
x_{{a}}+x_{{b}}, y_{{a}}+x_{{a}}+x_{{{\rootnode}}},\\ & y_{{b}}+x_{{b}}+x_{{{\rootnode}}},
x_{{b}}+x_{{{\rootnode}}}+y_{{b}}+y_{{a}}+x_{{a}}=1.
\end{align*}
This property of the eigenvalues being partial sums of the probabilities
will persist for the \landslide{} in general.
\end{eg}

The plan for the rest of this section is as follows.
We will first define the state space of our models in Section~\ref{subsection.trees}.
The \trickle{} is defined in Section~\ref{subsection.single grain}, where we also state the
stationary distribution for this model. The \landslide{} is introduced in Section~\ref{subsection.entire}
together with its stationary distribution and precise formulas for the eigenvalues of the transition matrix.
In Section~\ref{subsection.rates} we state the rate of convergence and mixing time for the
\landslide. Finally in Section~\ref{subsection.1dim}, we discuss the specialization of the Markov chains to the case when
the tree is just a one-dimensional line.

\subsection{Arborescences}
\label{subsection.trees}

Let $V$ be the vertex set of the arborescence. We only consider arborescences with finitely
many vertices. The special root vertex is denoted by $\rootnode$.

To each vertex $v\in V$, we associate a \defi{threshold} $T_v$. The state space of our Markov chain
is defined to be
\begin{equation} \label{equation.state space}
	\Omega = \Omega(\tree) := \{(t_v)_{v\in V} \mid 0\le t_v \le T_v\}.
\end{equation}
In other words, at vertex $v$ there can be at most $T_v$ grains. We gather all thresholds in a tuple as
$T=(T_v)_{v\in V}$. The arborescence with its vertices $V$, edges $E$ and thresholds $T$ is denoted
by $\tree=(V,E,T)$. An example of a configuration $t\in \Omega(\tree)$ is given in Figure~\ref{figure.configuration}.

\begin{figure}
\begin{center}
\vspace{-0.5cm}
\begin{tikzpicture} [>=triangle 45, x=0.5cm,y=0.5cm]
\draw (1,6) node {$0$};
\draw (3,6) node {$1$};
\draw (4.5,6) node {$2$};
\draw (2,4) node {$1$};
\draw (4.5,4) node {$0$};
\draw (6,4) node {$0$};
\draw (1,2) node {$1$};
\draw (3,2) node {$2$};
\draw (6,2) node {$1$};
\draw (3,0) node {$1$};
\draw [->] (1.1,5.7) -- (1.9,4.3);
\draw [->] (2.9,5.7) -- (2.1,4.3);
\draw [->] (4.5,5.7) -- (4.5,4.3);
\draw [->] (2.1,3.7) -- (2.9,2.3);
\draw [->] (4.4,3.7) -- (3.1,2.3);
\draw [->] (6,3.7) -- (6,2.3);
\draw [->] (1.1,1.7) -- (2.9,0.3);
\draw [->] (3,1.7) -- (3,0.3);
\draw [->] (5.9,1.7) -- (3.1,0.3);
\end{tikzpicture}
\vspace{-0.5cm}
\caption{A configuration for the arboresence from Figure~\ref{figure.arborescence} with all thresholds $T_v=2$.
\label{figure.configuration}}
\end{center}
\end{figure}
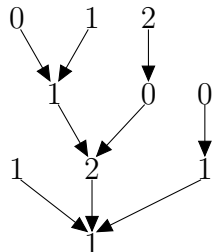

The Markov chain is defined by certain \defi{toppling} and \defi{source operators} on the state space.
We associate a toppling operator $\theta_v\colon \Omega \to \Omega$ to each vertex $v\in V$ for the
\trickle{} (respectively $\tau_v\colon \Omega \to \Omega$ for the \landslide). They topple grains from vertex $v$
along the unique outgoing edge. (Here we assume that an outgoing edge is attached to the root $\rootnode$).
The precise definitions are stated in the next subsections.
In addition, let $L$ be the set of leaves of the arborescence. The source operators
$\source_\ell\colon \Omega\to \Omega$ for $\ell\in L$ are certain operators adding grains at the leaves.

Let $\{x_v,y_\ell \mid v\in V, \ell \in L\}$ be a probability distribution on toppling and source operators,
that is, $x_v$ is the probability of choosing $\theta_v$ (respectively $\tau_v$) and $y_\ell$ is the probability
of choosing $\source_\ell$. We assume that
\begin{enumerate}
\item
$0< x_v, y_\ell\le 1$
\item
$\sum_{v\in V} x_v + \sum_{\ell \in L} y_\ell = 1$
\end{enumerate}
to make it into a proper probability distribution. But in principle these constraints can be relaxed.
This defines for us Markov chains as random walks on graphs whose states are the elements
of $\Omega$ and whose weighted edges are given by the toppling and source operators.

Next we define both models in detail.

\subsection{\trickle{}}
\label{subsection.single grain}

For a vertex $v\in V$ let
\begin{equation}\label{equation.path}
	v^\downarrow=(v= v_0 \to v_1\to \cdots \to v_a=\rootnode)
\end{equation}
be the path from $v$ to the root $\rootnode$; we also use this notation for the set of vertices of the path (the downset of $v$).

\vspace{4mm}
\noindent
{\bf Source operator:} For each leaf $v\in L$, we define a source
operator $\source_v\colon \Omega \to \Omega$ as follows.  As stated
before, the source operator follows the path $v^\downarrow$ from the leaf $v$ to
the root $\rootnode$ and adds a grain to the first vertex along the
way that has not yet reached its threshold, if such a vertex exists.
The precise definition (retaining the notation of \eqref{equation.path}) is: given $t=(t_w)_{w\in V}\in \Omega$, define
$\source_v(t)=t'$ as follows. Let $k\ge 0$ be smallest such that
$t_{v_k}\neq T_{v_k}$. In other words, $t_{v_0}=T_{v_0},\ldots,
t_{v_{k-1}} = T_{v_{k-1}}$, but $t_{v_k} < T_{v_k}$.  Then $t_w'=t_w$
for all $w\in V$ except for $w=v_k$, and $t_{v_k}'=t_{v_k}+1$. If no
such $k$ exists, then $t'=t$.

\vspace{4mm}
\noindent
{\bf Topple operator:} For each vertex $v\in V$, we define a topple operator $\theta_v\colon \Omega\to \Omega$.
Intuitively, $\theta_v$ takes a grain from vertex $v$ and adds it to the first possible site along the path
from $v$ to the root. If there is no available site,
the grain drops out after the root.  Let us give the formal definition.
Let $t=(t_w)_{w\in V}\in \Omega$ and put $\theta_v(t)=t'$ defined as follows.
Consider the path $v^\downarrow$ as in \eqref{equation.path}. If $t_v=0$, then  $\theta_v(t)=t$.
Otherwise $t_v>0$ and let $k\ge 1$ be smallest such that $t_{v_k}\neq T_{v_k}$. In other words,
$t_{v_1}=T_{v_1},\ldots, t_{v_{k-1}} = T_{v_{k-1}}$, but $t_{v_k} < T_{v_k}$.
Then $t_w'=t_w$ except $t_v'=t_v-1$ and $t_{v_k}'=t_{v_k}+1$. If no such $k$ exists, then $t_w'=t_w$
except $t_v'=t_v-1$.

In particular for $v=\rootnode$ the root, we have $t_\rootnode'=\max\{0,t_\rootnode-1\}$ and all other $t_w$ are unchanged.

Examples for the source and topple operators for the \trickle{} are given in Figures~\ref{figure.source operators}
and~\ref{figure.trickle operators}, respectively.

\begin{figure}
\begin{tabular}{c c c c}
$\source_j:$ &
\parbox[c]{4cm}{
\begin{tikzpicture} [>=triangle 45, x=0.5cm,y=0.5cm]
\draw (1,6) node {$0$};
\draw (3,6) node {$2$};
\draw [->] [color=blue] (3,7) -- (3,6.3);
\draw (4.5,6) node {$2$};
\draw (2,4) node {$1$};
\draw (4.5,4) node {$1$};
\draw (6,4) node {$2$};
\draw (1,2) node {$1$};
\draw (3,2) node {$1$};
\draw (6,2) node {$2$};
\draw (3,0) node {$2$};
\draw [->] (1.1,5.7) -- (1.9,4.3);
\draw [->] (2.9,5.7) -- (2.1,4.3);
\draw [->] (4.5,5.7) -- (4.5,4.3);
\draw [->] (2.1,3.7) -- (2.9,2.3);
\draw [->] (4.4,3.7) -- (3.1,2.3);
\draw [->] (6,3.7) -- (6,2.3);
\draw [->] (1.1,1.7) -- (2.9,0.3);
\draw [->] (3,1.7) -- (3,0.3);
\draw [->] (5.9,1.7) -- (3.1,0.3);
\end{tikzpicture}
}
&
$ \mapsto $
&
\parbox[c]{4cm}{
\begin{tikzpicture} [>=triangle 45, x=0.5cm,y=0.5cm]
\draw (1,6) node {$0$};
\draw (3,6) node [color=blue]{$2$};
\draw (4.5,6) node {$2$};
\draw (2,4) node [color=blue]{$2$};
\draw (4.5,4) node {$1$};
\draw (6,4) node {$2$};
\draw (1,2) node {$1$};
\draw (3,2) node {$1$};
\draw (6,2) node {$2$};
\draw (3,0) node {$2$};
\draw [->] (1.1,5.7) -- (1.9,4.3);
\draw [->] (2.9,5.7) -- (2.1,4.3);
\draw [->] (4.5,5.7) -- (4.5,4.3);
\draw [->] (2.1,3.7) -- (2.9,2.3);
\draw [->] (4.4,3.7) -- (3.1,2.3);
\draw [->] (6,3.7) -- (6,2.3);
\draw [->] (1.1,1.7) -- (2.9,0.3);
\draw [->] (3,1.7) -- (3,0.3);
\draw [->] (5.9,1.7) -- (3.1,0.3);
\end{tikzpicture}
}
\end{tabular}

\begin{tabular}{c c c c}
$\source_g:$ &
\parbox[c]{4cm}{
\begin{tikzpicture} [>=triangle 45, x=0.5cm,y=0.5cm]
\draw (1,6) node {$0$};
\draw (3,6) node {$2$};
\draw (4.5,6) node {$2$};
\draw (2,4) node {$1$};
\draw (4.5,4) node {$1$};
\draw (6,4) node [color=blue]{$2$};
\draw [->] [color=blue] (6,5) -- (6,4.3);
\draw (1,2) node {$1$};
\draw (3,2) node {$1$};
\draw (6,2) node {$2$};
\draw (3,0) node {$2$};
\draw [->] (1.1,5.7) -- (1.9,4.3);
\draw [->] (2.9,5.7) -- (2.1,4.3);
\draw [->] (4.5,5.7) -- (4.5,4.3);
\draw [->] (2.1,3.7) -- (2.9,2.3);
\draw [->] (4.4,3.7) -- (3.1,2.3);
\draw [->] (6,3.7) -- (6,2.3);
\draw [->] (1.1,1.7) -- (2.9,0.3);
\draw [->] (3,1.7) -- (3,0.3);
\draw [->] (5.9,1.7) -- (3.1,0.3);
\end{tikzpicture}
}
&
$ \mapsto $
&
\parbox[c]{4cm}{
\begin{tikzpicture} [>=triangle 45, x=0.5cm,y=0.5cm]
\draw (1,6) node {$0$};
\draw (3,6) node {$2$};
\draw (4.5,6) node {$2$};
\draw (2,4) node {$1$};
\draw (4.5,4) node {$1$};
\draw (6,4) node [color=blue]{$2$};
\draw (1,2) node {$1$};
\draw (3,2) node {$1$};
\draw (6,2) node [color=blue]{$2$};
\draw (3,0) node [color=blue]{$2$};
\draw [->] (1.1,5.7) -- (1.9,4.3);
\draw [->] (2.9,5.7) -- (2.1,4.3);
\draw [->] (4.5,5.7) -- (4.5,4.3);
\draw [->] (2.1,3.7) -- (2.9,2.3);
\draw [->] (4.4,3.7) -- (3.1,2.3);
\draw [->] (6,3.7) -- (6,2.3);
\draw [->] (1.1,1.7) -- (2.9,0.3);
\draw [->] (3,1.7) -- (3,0.3);
\draw [->] (5.9,1.7) -- (3.1,0.3);
\end{tikzpicture}
}
\end{tabular}
\caption{Example of source operator actions on states in $\Omega(\tree)$ for $\tree$ as in Figure~\ref{figure.arborescence}.
\label{figure.source operators}}
\end{figure}

\begin{figure}
\begin{tabular}{c c c c}
$\theta_k:$ &
\parbox[c]{4cm}{
\begin{tikzpicture} [>=triangle 45, x=0.5cm,y=0.5cm]
\draw (1,6) node {$0$};
\draw (3,6) node {$2$};
\draw (4.5,6) node [color=blue]{$2$};
\draw (2,4) node {$1$};
\draw (4.5,4) node {$1$};
\draw (6,4) node {$2$};
\draw (1,2) node {$1$};
\draw (3,2) node {$1$};
\draw (6,2) node {$2$};
\draw (3,0) node {$2$};
\draw [->] (1.1,5.7) -- (1.9,4.3);
\draw [->] (2.9,5.7) -- (2.1,4.3);
\draw [->] (4.5,5.7) -- (4.5,4.3);
\draw [->] (2.1,3.7) -- (2.9,2.3);
\draw [->] (4.4,3.7) -- (3.1,2.3);
\draw [->] (6,3.7) -- (6,2.3);
\draw [->] (1.1,1.7) -- (2.9,0.3);
\draw [->] (3,1.7) -- (3,0.3);
\draw [->] (5.9,1.7) -- (3.1,0.3);
\end{tikzpicture}
}
&
$ \mapsto $
&
\parbox[c]{4cm}{
\begin{tikzpicture} [>=triangle 45, x=0.5cm,y=0.5cm]
\draw (1,6) node {$0$};
\draw (3,6) node {$2$};
\draw (4.5,6) node [color=blue]{$1$};
\draw (2,4) node {$2$};
\draw (4.5,4) node [color=blue]{$2$};
\draw (6,4) node {$2$};
\draw (1,2) node {$1$};
\draw (3,2) node {$1$};
\draw (6,2) node {$2$};
\draw (3,0) node {$2$};
\draw [->] (1.1,5.7) -- (1.9,4.3);
\draw [->] (2.9,5.7) -- (2.1,4.3);
\draw [->] (4.5,5.7) -- (4.5,4.3);
\draw [->] (2.1,3.7) -- (2.9,2.3);
\draw [->] (4.4,3.7) -- (3.1,2.3);
\draw [->] (6,3.7) -- (6,2.3);
\draw [->] (1.1,1.7) -- (2.9,0.3);
\draw [->] (3,1.7) -- (3,0.3);
\draw [->] (5.9,1.7) -- (3.1,0.3);
\end{tikzpicture}
}
\end{tabular}

\begin{tabular}{c c c c}
$\theta_g:$ &
\parbox[c]{4cm}{
\begin{tikzpicture} [>=triangle 45, x=0.5cm,y=0.5cm]
\draw (1,6) node {$0$};
\draw (3,6) node {$2$};
\draw (4.5,6) node {$2$};
\draw (2,4) node {$1$};
\draw (4.5,4) node {$1$};
\draw (6,4) node [color=blue]{$2$};
\draw (1,2) node {$1$};
\draw (3,2) node {$1$};
\draw (6,2) node {$2$};
\draw (3,0) node {$2$};
\draw [->] (1.1,5.7) -- (1.9,4.3);
\draw [->] (2.9,5.7) -- (2.1,4.3);
\draw [->] (4.5,5.7) -- (4.5,4.3);
\draw [->] (2.1,3.7) -- (2.9,2.3);
\draw [->] (4.4,3.7) -- (3.1,2.3);
\draw [->] (6,3.7) -- (6,2.3);
\draw [->] (1.1,1.7) -- (2.9,0.3);
\draw [->] (3,1.7) -- (3,0.3);
\draw [->] (5.9,1.7) -- (3.1,0.3);
\end{tikzpicture}
}
&
$ \mapsto $
&
\parbox[c]{4cm}{
\begin{tikzpicture} [>=triangle 45, x=0.5cm,y=0.5cm]
\draw (1,6) node {$0$};
\draw (3,6) node {$2$};
\draw (4.5,6) node {$1$};
\draw (2,4) node {$2$};
\draw (4.5,4) node {$2$};
\draw (6,4) node [color=blue]{$1$};
\draw (1,2) node {$1$};
\draw (3,2) node {$1$};
\draw (6,2) node [color=blue]{$2$};
\draw (3,0) node [color=blue]{$2$};
\draw [->] (1.1,5.7) -- (1.9,4.3);
\draw [->] (2.9,5.7) -- (2.1,4.3);
\draw [->] (4.5,5.7) -- (4.5,4.3);
\draw [->] (2.1,3.7) -- (2.9,2.3);
\draw [->] (4.4,3.7) -- (3.1,2.3);
\draw [->] (6,3.7) -- (6,2.3);
\draw [->] (1.1,1.7) -- (2.9,0.3);
\draw [->] (3,1.7) -- (3,0.3);
\draw [->] (5.9,1.7) -- (3.1,0.3);
\end{tikzpicture}
}
\end{tabular}
\caption{Example of topple operator actions of the \trickle{} on states in $\Omega(\tree)$ for $\tree$
as in Figure~\ref{figure.arborescence}.
\label{figure.trickle operators}}
\end{figure}

\begin{prop} \label{prop:strong1}
The directed graph $G_{\theta}$ whose vertex set is $\Omega$ and whose
edges are given by the operators $\source_\ell$ for $\ell\in L$ and
$\theta_v$ for $v\in V$ is strongly connected and the corresponding
Markov chain is ergodic.
\end{prop}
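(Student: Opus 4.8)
The plan is to establish strong connectivity of $G_\theta$ by showing that from any configuration one can reach the all-zeros configuration $\mathbf{0} = (0)_{v \in V}$, and conversely that $\mathbf{0}$ can reach any configuration; strong connectivity then follows by composition. For ergodicity we additionally exhibit a self-loop (an operator fixing some state) to guarantee aperiodicity, after which the facts quoted in the excerpt about strongly connected, aperiodic chains yield the result.

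First I would show that every state can reach $\mathbf{0}$. The key observation is that for the root, $\theta_\rootnode$ sends $t_\rootnode \mapsto \max\{0, t_\rootnode - 1\}$ and leaves all other coordinates unchanged. More generally, I claim that repeated application of topple operators drains all sand. I would process vertices in order of increasing distance to the root (or argue by a monovariant such as the total number of grains $\sum_{v} t_v$ together with a secondary statistic measuring how close the grains are to the root). Concretely: pick any vertex $v$ with $t_v > 0$ that is closest to the root among occupied vertices; applying $\theta_v$ either moves a grain strictly closer to the root along $v^\downarrow$, or—if $v = \rootnode$ or the path to $\rootnode$ is saturated—ejects a grain entirely. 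In either case a suitable potential function strictly decreases, so after finitely many topplings we reach $\mathbf{0}$. The mild subtlety here is choosing a potential that provably decreases even when a grain relocates to an interior vertex rather than exiting; weighting each grain by (threshold capacity remaining below it along its downpath) or by its graph-distance to the root handles this, since a topple from $v$ deposits the grain at the first non-full site strictly below $v$, strictly decreasing that weight.

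Next I would show $\mathbf{0}$ reaches every configuration $t = (t_v)$. Here the source operators do the work: for a target $t$, I would fill vertices from the root upward. Since every vertex lies on some leaf-to-root path (because each vertex has a unique directed path to $\rootnode$, and can be reached from a leaf above it), a source operator $\source_\ell$ adds a grain to the first non-full vertex along $\ell^\downarrow$. By filling the deeper vertices (those nearer the root) to their target values first, and then working outward toward the leaves, each source application deposits a grain exactly where desired, since all vertices strictly below the current target along its path are already at the desired (and here, full relative to the partial target) height. I would formalize this by induction on distance from the root, using that $t$ is an arbitrary valid configuration in $\Omega$, i.e. $0 \le t_v \le T_v$.

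The main obstacle is the second direction: the source operators are constrained to add grains only at the \emph{first} available site along a path, so one cannot freely place a grain at an arbitrary interior vertex in a single step. The argument must respect this greedy filling rule, which is exactly why the root-upward induction order is essential—when targeting vertex $v$ we must ensure every vertex strictly below $v$ on the relevant path is already saturated at its target height, so that the source grain skips past them and lands at $v$. Finally, aperiodicity is immediate: taking any state $t$ with $t_\rootnode = 0$, the operator $\theta_\rootnode$ fixes $t$ (since $\max\{0, -1\}$ does not apply when $t_\rootnode=0$, giving $\theta_\rootnode(t) = t$), producing a self-loop in $G_\theta$. Combined with strong connectivity and the quoted Markov chain facts, this gives ergodicity.
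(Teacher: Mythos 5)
Your draining argument and your aperiodicity observation are both sound (the paper simply asserts the draining step without a potential function, and obtains a loop from the fact that the topple operators fix the all-zeros state rather than from $\theta_\rootnode$ fixing states with $t_\rootnode=0$; either works). But the filling direction has a genuine gap, located exactly at the point you flag as the ``main obstacle.'' The source operator $\source_\ell$ deposits its grain at the first vertex along $\ell^\downarrow$, \emph{starting at the leaf $\ell$ itself}, that has not reached its \emph{threshold}. So for a grain to skip past a vertex $w$ and land at an interior target $v$, two things are needed that your plan does not provide: first, the skipped vertices are those between $\ell$ and $v$, i.e. strictly above $v$, not the already-filled vertices below $v$ nearer the root, which the grain never reaches at all; second, skipping requires those vertices to be saturated at their full thresholds $T_w$, not at their target heights $t_w$, which may satisfy $t_w < T_w$. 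Consequently your root-upward, sources-only induction fails at the very first step: from the all-zeros state, no sequence of source operators alone can place $t_\rootnode < T_\rootnode$ grains at the root while leaving the rest of the tree at prescribed sub-threshold values, since the grain added by $\source_\ell$ always lands at the topmost non-full vertex --- the leaf itself, when the leaf is empty.

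The repair is to interleave topple operators as transport, which is what the paper does. Fix a leaf $\ell$ with path $v_0=\ell \to v_1 \to \cdots \to v_a = \rootnode$ and fill this path from the root end upward: to give $v_k$ its value $t_k$ while $v_0,\ldots,v_k$ are still empty, apply $\tau_{v_{k-1}}\cdots\tau_{v_1}\tau_{v_0}\source_\ell^{t_k}$. The $t_k$ pumped grains pile up near the leaf and are then cascaded downward by the topples; they cannot overshoot $v_k$ because $t_k \le T_{v_k}$ and $v_k$ is empty, so all of them settle exactly at $v_k$ and every other vertex is unchanged. Repeating for $k=a, a-1, \ldots, 0$ and then proceeding from leaf to leaf realizes any $t\in\Omega$; since $\tau_v=\theta_v^{T_v}$, all of these moves are available in $G_\theta$ as well. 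So your overall architecture (drain to zero, fill from zero, loop for aperiodicity) matches the paper's, but the fill step cannot be done with sources alone.
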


We defer the proof of Proposition~\ref{prop:strong1} until Section~\ref{subsection.entire}.
Examples of $G_\theta$ are given in Figure~\ref{figure.G tree3} and
Figure~\ref{figure.G theta1d}.

For $v \in V$, let $L_v$ be the set of all sources $\ell\in L$ whose downset $\ell^\downarrow$ contains $v$.
More precisely,
\[
 	L_v := \{ \ell \in L \mid v \in \ell^\downarrow\}.
\]

Moreover, let $Y_v := \sum_{\ell \in L_v} y_\ell$.  For $v \in V$ and $0 \leq h
\leq T_v$, let
\be \label{defrho}
 	\rho_v(h) := \dfrac{Y_v^h \;x_v^{T_v-h}}{\sum_{i=0}^{T_v} Y_v^i\;
x_v^{T_v-i}}.
\ee
Then the following theorem completely describes the stationary distribution.

\begin{thm}\label{stationaryforfirstvarianttree}
The stationary distribution of the Trickle-down sandpile Markov chain
defined on $G_\theta$ is given by the product measure
\be \label{probtree1}
\P(t) = \prod_{v\in V} \rho_v(t_v).
\ee
\end{thm}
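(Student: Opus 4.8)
The plan is to verify directly that the claimed product measure $\P(t) = \prod_{v\in V}\rho_v(t_v)$ is the stationary distribution by checking that it is the fixed point of the Markov chain's transition operator. Since Proposition~\ref{prop:strong1} guarantees ergodicity, the stationary distribution is unique, so it suffices to exhibit any probability vector that is invariant under the dynamics. I would write down the master (balance) equation: for each state $t\in\Omega$, the stationary condition requires that the total probability flowing into $t$ equals the total probability flowing out of $t$ under the weighted action of the operators $\source_\ell$ (with weight $y_\ell$) and $\theta_v$ (with weight $x_v$).

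The key structural observation I would try to exploit is that the proposed measure factorizes over vertices, with each factor $\rho_v$ depending only on $Y_v$ (the aggregate source rate reaching $v$) and $x_v$. This suggests that the balance should decouple vertex by vertex, so that the global stationarity reduces to a one-dimensional birth-death type balance at each vertex $v$: grains arrive at $v$ at effective rate $Y_v$ (whenever a source or toppled grain trickles down to $v$ as the first non-full site) and leave at rate $x_v$ (when $v$ itself topples and its grain moves further down). From \eqref{defrho} one sees that $\rho_v(h)/\rho_v(h-1) = Y_v/x_v$, which is exactly the detailed-balance ratio one expects for such a local birth-death chain. The first main step is therefore to formalize this decoupling: I would argue that the combined effect of sources and topplings, restricted to how they change $t_v$, behaves like an independent height process at each vertex whose up-rate is $Y_v$ and whose down-rate is $x_v$, and that the cross-terms arising from a single grain passing through several vertices cancel in the balance equation.

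The hard part will be handling the nonlocal, nonabelian nature of the operators: a single application of $\source_\ell$ or $\theta_v$ can increment the height at a vertex far down the path while depending on whether intermediate vertices are saturated, so the transitions do not literally factor as independent coordinatewise moves. The crux is to show that these "skip-ahead" transitions nevertheless produce a net flow consistent with the product form. I expect the cleanest route is the wreath-product and monoid approach advertised in the introduction (Section~\ref{section.monoid sandpile} and Section~\ref{section.stationary}): one realizes the operators as elements acting in a self-similar wreath product, where the stationary measure of the whole tree is built recursively from the stationary measures of the subtree rooted at each child together with a local factor at the root. In that framework the product form emerges naturally from the multiplicativity of stationary distributions under the wreath product, reducing the tree case to a recursion on the subtrees. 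The remaining routine step would be the base case (a single vertex with incoming rate $Y_v$ and toppling rate $x_v$, giving exactly $\rho_v$) and the verification that the recursion's gluing factor matches $\rho_v$ at each internal vertex; this is where I would spend the bulk of the bookkeeping, and where the explicit $\rho_v$ formula \eqref{defrho} must be checked against the inductive hypothesis.
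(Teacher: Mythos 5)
Your proposal is, in outline, the paper's own: the paper proves Theorem~\ref{stationaryforfirstvarianttree} twice, once by the master-equation route you sketch first (Section~\ref{subsection.master equation}, which uses a ``partitioned balance'' grouping of incoming transitions against outgoing ones rather than detailed balance --- the chain is nonreversible, so the decoupling you hope for does not hold transition-by-transition, only after the right grouping), and once by the wreath-product recursion you favor (Section~\ref{subsection.stationary}).

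Two points in your recursive scheme would need correcting before the bookkeeping can start. First, the direction of the recursion: you propose to decompose at the root into the subtrees of its children together with a local root factor, but the root's transitions depend on whether paths inside the subtrees are saturated, so no clean wreath/product structure is available in that direction. The paper instead peels a \emph{leaf} $\ell$, writing $\Omega(\tree)=[T_\ell]\times\Omega(\nabla_\ell\tree)$; the leaf coordinate sits at the top of the flow, its height dynamics is autonomous, and its influence on the remaining tree is captured exactly by the wreath product through $\source_{\su(\ell)}$. Second, there is no general ``multiplicativity of stationary distributions under wreath products'' to invoke; the gluing step is the concrete Proposition~\ref{stationaryrecursion}, a block-tridiagonal matrix computation hinging on the ratio $\rho_v(h)/\rho_v(h-1)=Y_v/x_v$ that you correctly identified. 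Moreover, closing the induction forces an enlargement of the model class you did not anticipate: removing $\ell$ transfers its source weight to $\su(\ell)$, which need not be a leaf, so one must prove the stronger statement allowing a source $\source_v$ with probability $y_v$ at \emph{every} vertex and specialize $y_v=0$ off the leaves at the end; the derived chain's probabilities are rescaled by $1/(1-x_\ell)$, and the identity $\widetilde Y_v=Y_v/(1-x_\ell)$ shows each factor $\rho_v$ is invariant under this rescaling, which is precisely what makes the induction close. With those corrections your plan coincides with the paper's second proof.
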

A proof of Theorem~\ref{stationaryforfirstvarianttree} using master
equations is given in Section~\ref{subsection.master equation}.  An
alternative proof with an algebraic flavor is presented in
Section~\ref{subsection.stationary}.

The theorem implies that the random variables giving the number of grains at vertex $v$ and at vertex $u$ are independent if we sample from the stationary distribution, regardless of where $u$ and $v$ are located on the tree.

Recall that the (nonequilibrium) partition function of a Markov chain
is the least common denominator of the stationary probabilities. The
following is an immediate corollary of
Theorem~\ref{stationaryforfirstvarianttree}.

\begin{cor} \label{cor:partfn1}
The partition function $Z_\theta$ of the Trickle-down sandpile Markov chain defined on $G_\theta$ is
\[
Z_\theta = \prod_{v \in V} \bigl(\sum_{i=0}^{T_v} Y_v^i x_v^{T_v-i}\bigr).
\]
\end{cor}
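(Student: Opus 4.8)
The plan is to read off $Z_\theta$ directly from Theorem~\ref{stationaryforfirstvarianttree}, so that the work is essentially algebraic bookkeeping. First I would substitute the definition \eqref{defrho} of $\rho_v$ into the product measure \eqref{probtree1} to obtain
\[
\P(t) = \frac{\prod_{v\in V} Y_v^{t_v}\, x_v^{T_v-t_v}}{\prod_{v\in V}\bigl(\sum_{i=0}^{T_v} Y_v^i\, x_v^{T_v-i}\bigr)}.
\]
Setting $w_v(h):=Y_v^h x_v^{T_v-h}$ and $Z_v:=\sum_{i=0}^{T_v} w_v(i)$, this exhibits $\P(t)=w(t)/Z_\theta$ with numerator the polynomial $w(t):=\prod_{v\in V} w_v(t_v)$ and common denominator exactly the claimed $Z_\theta=\prod_{v\in V} Z_v$. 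Hence $Z_\theta$ is at least a common denominator of all stationary probabilities.

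Next I would verify that $Z_\theta$ is the genuine normalizing constant by summing over $\Omega$. Since $\Omega=\{(t_v)_{v\in V}\mid 0\le t_v\le T_v\}$ is a product set, the sum factorizes termwise:
\[
\sum_{t\in\Omega} w(t) = \prod_{v\in V}\Bigl(\sum_{h=0}^{T_v} w_v(h)\Bigr) = \prod_{v\in V} Z_v = Z_\theta,
\]
so that $\sum_{t\in\Omega}\P(t)=Z_\theta/Z_\theta=1$. This simultaneously re-confirms that \eqref{probtree1} defines a probability measure and pins $Z_\theta$ down as its partition function, matching the asserted formula.

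The only point requiring care is the adjective ``least'': I must check that no common polynomial factor cancels between $w(t)$ and $Z_\theta$ across the whole family of states. The numerator $w(t)$ is a product of powers of the topple variables $x_v$ and of the leaf-sums $Y_v=\sum_{\ell\in L_v} y_\ell$, so its irreducible factors are variables $x_v$, individual $y_\ell$, and the linear forms $Y_v$. On the other hand, over $\mathbb{C}$ each factor $Z_v$ with $T_v\ge 1$ splits into linear forms $Y_v-\zeta x_v$ as $\zeta$ runs over the nontrivial $(T_v+1)$-th roots of unity; since $\zeta\neq 0$ and $Y_v$ contains at least one leaf variable, each such form genuinely involves both $x_v$ and the $y_\ell$ with $\ell\in L_v$, and so is proportional to none of the irreducible factors of $w(t)$. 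Therefore $\gcd(w(t),Z_\theta)=1$ for every $t$, each $\P(t)$ is already in lowest terms, and $Z_\theta$ is the least common denominator. This non-cancellation check is the main (and only) obstacle; everything preceding it is an immediate rearrangement of Theorem~\ref{stationaryforfirstvarianttree}, which is exactly why the result is recorded as a corollary.
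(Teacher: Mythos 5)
Your proof is correct and takes essentially the same route as the paper, which states Corollary~\ref{cor:partfn1} as an immediate consequence of Theorem~\ref{stationaryforfirstvarianttree}: one simply reads off the common normalization factor $\prod_{v\in V}\bigl(\sum_{i=0}^{T_v} Y_v^i x_v^{T_v-i}\bigr)$ from the product measure, exactly as you do. Your extra verification that nothing cancels --- splitting each factor into the linear forms $Y_v-\zeta x_v$ over the nontrivial $(T_v+1)$-st roots of unity, none of which can divide the numerators $\prod_v Y_v^{t_v}x_v^{T_v-t_v}$ --- is a correct justification of the ``least common denominator'' wording that the paper silently takes for granted.
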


\subsection{\landslide{}}
\label{subsection.entire}

In this model, we define the paths $v^\downarrow$ from a vertex $v\in V$ to the root $\rootnode$ as in~\eqref{equation.path}
and the source operators as in Section~\ref{subsection.single grain}. The topple operator on the other hand will topple the
entire site instead of just a single grain.

\vspace{4mm}
\noindent
{\bf Topple operator:} For each vertex $v\in V$, we define a topple
operator $\tau_v\colon \Omega\to \Omega$. As stated before, $\tau_v$ empties
site $v$ and transfers all grains at site $v$ to the first available
sites on the path from $v$ to the root. If there are still grains
remaining, they exit the system from the root.  Formally, we can define $\tau_v=\theta_v^{T_v}$, that is $\tau_v$ is defined to be applying $\theta_v$ as many times as the threshold $T_v$ of $v$.

Examples for the topple operators for the \landslide{} are given in Figure~\ref{figure.landslide operators}.

\begin{figure}
\begin{tabular}{c c c c}
$\tau_k:$ &
\parbox[c]{4cm}{
\begin{tikzpicture} [>=triangle 45, x=0.5cm,y=0.5cm]
\draw (1,6) node {$0$};
\draw (3,6) node {$2$};
\draw (4.5,6) node [color=blue]{$2$};
\draw (2,4) node {$1$};
\draw (4.5,4) node {$1$};
\draw (6,4) node {$2$};
\draw (1,2) node {$1$};
\draw (3,2) node {$1$};
\draw (6,2) node {$2$};
\draw (3,0) node {$2$};
\draw [->] (1.1,5.7) -- (1.9,4.3);
\draw [->] (2.9,5.7) -- (2.1,4.3);
\draw [->] (4.5,5.7) -- (4.5,4.3);
\draw [->] (2.1,3.7) -- (2.9,2.3);
\draw [->] (4.4,3.7) -- (3.1,2.3);
\draw [->] (6,3.7) -- (6,2.3);
\draw [->] (1.1,1.7) -- (2.9,0.3);
\draw [->] (3,1.7) -- (3,0.3);
\draw [->] (5.9,1.7) -- (3.1,0.3);
\end{tikzpicture}
}
&
$ \mapsto $
&
\parbox[c]{4cm}{
\begin{tikzpicture} [>=triangle 45, x=0.5cm,y=0.5cm]
\draw (1,6) node {$0$};
\draw (3,6) node {$2$};
\draw (4.5,6) node [color=blue]{$0$};
\draw (2,4) node {$2$};
\draw (4.5,4) node [color=blue]{$2$};
\draw (6,4) node {$2$};
\draw (1,2) node {$1$};
\draw (3,2) node [color=blue]{$2$};
\draw (6,2) node {$2$};
\draw (3,0) node {$2$};
\draw [->] (1.1,5.7) -- (1.9,4.3);
\draw [->] (2.9,5.7) -- (2.1,4.3);
\draw [->] (4.5,5.7) -- (4.5,4.3);
\draw [->] (2.1,3.7) -- (2.9,2.3);
\draw [->] (4.4,3.7) -- (3.1,2.3);
\draw [->] (6,3.7) -- (6,2.3);
\draw [->] (1.1,1.7) -- (2.9,0.3);
\draw [->] (3,1.7) -- (3,0.3);
\draw [->] (5.9,1.7) -- (3.1,0.3);
\end{tikzpicture}
}
\end{tabular}

\begin{tabular}{c c c c}
$\tau_g:$ &
\parbox[c]{4cm}{
\begin{tikzpicture} [>=triangle 45, x=0.5cm,y=0.5cm]
\draw (1,6) node {$0$};
\draw (3,6) node {$2$};
\draw (4.5,6) node {$2$};
\draw (2,4) node {$1$};
\draw (4.5,4) node {$1$};
\draw (6,4) node [color=blue]{$2$};
\draw (1,2) node {$1$};
\draw (3,2) node {$1$};
\draw (6,2) node {$2$};
\draw (3,0) node {$2$};
\draw [->] (1.1,5.7) -- (1.9,4.3);
\draw [->] (2.9,5.7) -- (2.1,4.3);
\draw [->] (4.5,5.7) -- (4.5,4.3);
\draw [->] (2.1,3.7) -- (2.9,2.3);
\draw [->] (4.4,3.7) -- (3.1,2.3);
\draw [->] (6,3.7) -- (6,2.3);
\draw [->] (1.1,1.7) -- (2.9,0.3);
\draw [->] (3,1.7) -- (3,0.3);
\draw [->] (5.9,1.7) -- (3.1,0.3);
\end{tikzpicture}
}
&
$ \mapsto $
&
\parbox[c]{4cm}{
\begin{tikzpicture} [>=triangle 45, x=0.5cm,y=0.5cm]
\draw (1,6) node {$0$};
\draw (3,6) node {$2$};
\draw (4.5,6) node {$1$};
\draw (2,4) node {$2$};
\draw (4.5,4) node {$2$};
\draw (6,4) node [color=blue]{$0$};
\draw (1,2) node {$1$};
\draw (3,2) node {$1$};
\draw (6,2) node [color=blue]{$2$};
\draw (3,0) node [color=blue]{$2$};
\draw [->] (1.1,5.7) -- (1.9,4.3);
\draw [->] (2.9,5.7) -- (2.1,4.3);
\draw [->] (4.5,5.7) -- (4.5,4.3);
\draw [->] (2.1,3.7) -- (2.9,2.3);
\draw [->] (4.4,3.7) -- (3.1,2.3);
\draw [->] (6,3.7) -- (6,2.3);
\draw [->] (1.1,1.7) -- (2.9,0.3);
\draw [->] (3,1.7) -- (3,0.3);
\draw [->] (5.9,1.7) -- (3.1,0.3);
\end{tikzpicture}
}
\end{tabular}
\caption{Example of topple operator actions of the \landslide{} on states in $\Omega(\tree)$ for $\tree$
as in Figure~\ref{figure.arborescence}.
\label{figure.landslide operators}}
\end{figure}

\begin{rem}
\label{remark.trickle and landslide equivalence}
If the thresholds are all one, that is, $T_v=1$ for all $v\in V$, then the
Trickle-down and Landslide sandpile models are equivalent.
\end{rem}

\begin{rem}
The directed nonabelian sandpile models can also be defined recursively by successively removing leaves.
This approach is taken in Sections~\ref{section.monoid sandpile} through~\ref{section.Rtrivial} when we prove
the stationary distributions and $\RR$-triviality of the underlying monoid.
\end{rem}

\begin{prop} \label{prop:strong2}
The directed graph $G_{\tau}$ whose vertex set is $\Omega$ and whose
edges are given by the operators $\source_\ell$ for $\ell\in L$ and
$\tau_v$ for $v\in V$ is strongly connected and the corresponding
Markov chain is ergodic.
\end{prop}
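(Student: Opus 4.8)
The plan is to prove strong connectivity of $G_\tau$ by showing that the empty configuration $\mathbf{0}=(0)_{v\in V}$ is a universal hub: every configuration can reach $\mathbf 0$, and $\mathbf 0$ can reach every configuration. Strong connectivity then follows since any $s$ reaches any $t$ through $s\to\mathbf 0\to t$, and ergodicity is a formal consequence of strong connectivity together with a single self-loop, using the facts recalled in Section~\ref{section.models}. The same two reachability statements, read with single grains, will simultaneously yield the deferred Proposition~\ref{prop:strong1} for $G_\theta$.

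For the direction ``any $t\in\Omega$ to $\mathbf 0$'', I would order the vertices $v_1,\dots,v_n$ by weakly decreasing distance to the root and apply $\tau_{v_1},\dots,\tau_{v_n}$ in this order. Each $\tau_{v_j}$ sets the $v_j$-coordinate to $0$. The key observation is that grains only ever move toward the root: $\tau_{v_k}$ alters only $v_k$ and the vertices lying strictly below it on $v_k^\downarrow$, all of which are strictly closer to the root than $v_k$, hence than every $v_j$ with $j<k$. Consequently no later toppling can refill an already-emptied vertex, and after the $n$ topples the chain is at $\mathbf 0$.

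For the direction ``$\mathbf 0$ to an arbitrary $t=(t_v)$'', I would instead process the vertices in weakly \emph{increasing} distance to the root, maintaining the invariant that, when $v$ is treated, every vertex strictly below $v$ on $v^\downarrow$ already carries its target value while every vertex strictly above $v$ is still empty. To raise the $v$-coordinate from $0$ to $t_v$, I repeat $t_v$ times the following ``source-and-march'' sequence: choose a leaf $\ell$ with $v\in\ell^\downarrow$, apply $\source_\ell$ (which, the cells above $v$ being empty, deposits a grain at $\ell$), and then apply $\tau$ successively at the vertices of the path from $\ell$ down to the vertex $w$ immediately above $v$ (so that $w\to v$ is an edge). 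Each of these topples carries the single grain one step closer to the root, so it finally lands on $v$, while every conduit vertex is returned to $0$. Since the sequence never topples $v$ nor any vertex below it, the already-fixed coordinates are undisturbed, and after $t_v$ repetitions the $v$-coordinate equals $t_v$ with all cells above $v$ empty again; iterating over all vertices produces $t$. In these sequences every conduit cell holds a single grain, so $\theta_v$ and $\tau_v=\theta_v^{T_v}$ act identically, which is why the very same construction settles Proposition~\ref{prop:strong1}.

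The step that needs the most care is this last construction: one must check that the interplay between $\source_\ell$, which fills the topmost non-full vertex of a path, and the topples, which push grains toward the root, really deposits each grain on the intended vertex and resets each conduit cell to $0$, and that this behaves correctly on a branching arborescence rather than merely on a line. Granting strong connectivity, ergodicity is then immediate: a finite strongly connected chain is irreducible, and the self-loop at $\mathbf 0$ furnished by $\tau_\rootnode$, which fixes $\mathbf 0$ with probability $x_\rootnode>0$, makes it aperiodic, hence ergodic with a unique stationary distribution as recalled above.
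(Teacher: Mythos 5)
Your proof is correct and follows essentially the same route as the paper: the zero configuration as a hub, reached from any state by toppling every vertex (the paper just says ``apply the $\tau_v$ sufficiently often''), rebuilt to an arbitrary $t$ by processing vertices from the root upward with a source-then-topple cascade along a leaf path, and aperiodicity from the self-loop at $\mathbf{0}$, with $\tau_v=\theta_v^{T_v}$ transferring everything to $G_\theta$. The only (inessential) difference is granularity: you march grains to their target vertex one at a time, whereas the paper pumps all $t_k$ grains in at once via $\source_\ell^{t_k}$ and then sweeps them down with a single application of $\tau_{v_{k-1}}\cdots\tau_{v_0}$.
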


We prove Propositions~\ref{prop:strong1} and~\ref{prop:strong2} simultaneously.
\begin{proof}
First we prove that $G_{\tau}$ is strongly connected.
By applying the operators $\tau_v$ with $v\in V$ sufficiently often, we can transform
any state to the zero state $(0)_{v\in V}$.

To go from $(0)_{v\in V}$ to any $t\in \Omega$ we use the following strategy.  Let $\ell$ be a
leaf and consider the path \eqref{equation.path}. Suppose that $t'$ satisfies $t'_{v_0}=t'_{v_1}=\cdots=t'_{v_k}=0$.
Then $\tau_{v_{k-1}}\cdots\tau_{v_1}\tau_{v_0}\source_{\ell}^{t_k}t'$ agrees with $t'$ at each vertex except $v_k$,
which will now have $t_k$ grains. Thus applying successively operations of this form, starting with $k=a$, we
can transform $(0)_{v\in V}$ to a vector which agrees with $t$ on all vertices of $\ell^\downarrow$ and has $0$ at all
remaining vertices.  Then proceeding from leaf to leaf, we can eventually reach the vector $t$.  Thus $G_\tau$ is strongly connected.  Observing that $\tau_v=\theta_v^{T_v}$ it immediately follows that $G_\theta$ is also strongly connected.

Both chains are aperiodic because $\tau_v$ and $\theta_v$ fix  $(0)_{v\in V}$ and so both digraphs contain loop edges.
\end{proof}

Since, when all thresholds are one, the \landslide{} is the same as
the \trickle{} (see Remark~\ref{remark.trickle and landslide
  equivalence}), Figure~\ref{figure.G tree3} and Figure~\ref{figure.G
  theta1d} also serve as examples for $G_\tau$.

The stationary distribution in this model is not a product measure  in general.
However, it is in one special case. Let
\begin{equation}\label{definemu}
\mu_v(h) := \begin{cases}
\frac{\ds Y_v^h x_v}{\ds (Y_v + x_v)^{h+1}} & \text{if }h < T_v,\\
\\
\frac{\ds Y_v^{T_v}}{\ds (Y_v + x_v)^{T_v}} & \text{if }h=T_v,
\end{cases}
\end{equation}
where as in Section~\ref{subsection.single grain} we have $Y_v = \sum_{\ell \in L_v} y_\ell$.
It is easy to check that $\ds \sum_{h=1}^{T_v} \mu_v(h)=1$ for all $v\in V$.

\begin{thm} \label{thm.stationarymu}
Let $T_v=1$ for all $v\in V$, $v\neq \rootnode$ and $T_\rootnode = m$ for some positive integer $m$. Then
the stationary distribution of the \landslide{} defined on $G_\tau$ is given by the product measure
\[
\P(t) = \prod_{v\in V} \mu_v(t_v).
\]
\end{thm}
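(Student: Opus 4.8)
The plan is to verify directly that the candidate product measure $\P(t)=\prod_{v\in V}\mu_v(t_v)$ is invariant for the transition matrix of the \landslide{} on $G_\tau$, and then invoke ergodicity (Proposition~\ref{prop:strong2}) together with uniqueness of the stationary distribution to finish. I write a configuration as a pair $(b,c)$, where $b=(t_v)_{v\neq\rootnode}\in\{0,1\}^{V\setminus\{\rootnode\}}$ records the unit-threshold vertices and $c=t_\rootnode\in\{0,\dots,m\}$ is the root occupation. The starting remark is that the only operator in which the \landslide{} differs from the \trickle{} is the root toppling: $\tau_\rootnode$ resets $c$ to $0$, whereas $\theta_\rootnode$ merely decrements it, while every other operator ($\source_\ell$ for $\ell\in L$ and $\theta_v=\tau_v$ for $v\neq\rootnode$) acts exactly as in the \trickle{}.

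First I would establish that the projection $(b,c)\mapsto b$ is a lumping: whether a firing operator routes a grain into the root depends only on $b$ (namely, on whether the relevant portion of the path $v^\downarrow$ is saturated), never on the current value of $c$. Consequently the induced chain on the $b$-coordinate is itself Markov, and it coincides with the \trickle{} (equivalently, by Remark~\ref{remark.trickle and landslide equivalence}, the \landslide{}) with unit thresholds on the forest of subtrees hanging below $\rootnode$, grains exiting whenever they would pass the root. Since stationary distributions depend only on the ratios of the operator weights, Theorem~\ref{stationaryforfirstvarianttree} applied to each subtree identifies the unique stationary law of this $b$-chain as $\prod_{v\neq\rootnode}\rho_v(b_v)=\prod_{v\neq\rootnode}\mu_v(b_v)$, using $\rho_v=\mu_v$ for unit-threshold $v$. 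In particular the non-root marginal of any stationary law of the full chain is forced to equal $\prod_{v\neq\rootnode}\mu_v$, so it remains only to pin down the conditional law of $c$ given $b$.

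For the root coordinate I would set up the global balance $\P(b,c)=\sum_{(b',c')}M_{(b,c),(b',c')}\,\P(b',c')$ and verify it level by level in $c$, summing the contributions of $\tau_\rootnode$ (which feeds only $c=0$, with total inflow $x_\rootnode\prod_{v\neq\rootnode}\mu_v(b_v)$) and of the grain-routing operators (which raise $c$ by one when their path is saturated and $c<m$). Using the stationarity of $\prod_{v\neq\rootnode}\mu_v$ for the $b$-chain to collapse the non-root bookkeeping, the balance at each level $0\le h<m$ should reduce to the birth-and-reset recursion $\mu_\rootnode(h)=\tfrac{Y_\rootnode}{Y_\rootnode+x_\rootnode}\,\mu_\rootnode(h-1)$, while the saturated boundary level $h=m$ produces the exceptional mass $\mu_\rootnode(m)=\bigl(Y_\rootnode/(Y_\rootnode+x_\rootnode)\bigr)^m$; the normalization $\sum_{v\in V}x_v+\sum_{\ell\in L}y_\ell=1$ supplies the cancellation that makes the two sides agree. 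Together with the uniqueness above, this completes the proof.

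The main obstacle is the middle of the last step. The instantaneous rate at which grains arrive at the root, the total weight of operators whose path out of their firing vertex to $\rootnode$ is saturated in $b$, genuinely depends on $b$ and is not the constant $Y_\rootnode$; indeed it can vanish. The content of the theorem is that, when weighted against the stationary measure $\prod_{v\neq\rootnode}\mu_v$, this $b$-dependent modulation nevertheless preserves the product form level by level, the effective upward flux averaging to exactly $Y_\rootnode$. Making this precise requires exploiting how the saturated-path events are correlated with the non-root configuration, which is most transparently organized through the wreath-product decomposition of the transition monoid advertised in the introduction; I would use that self-similar structure to carry out the level-by-level cancellation cleanly rather than by brute force.
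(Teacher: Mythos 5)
Your reduction is sound as far as it goes: the lumping observation is correct (the action of every generator on the non-root coordinates $b$ is independent of the root occupation $c$ — even at the boundary $c=m$, where a blocked source acts as the identity on $b$ as well), the induced $b$-chain is the unit-threshold \trickle{} on the forest $V\setminus\{\rootnode\}$ up to a harmless holding probability $x_\rootnode$, and Theorem~\ref{stationaryforfirstvarianttree} forces its stationary law to be $\beta(b)=\prod_{v\neq\rootnode}\mu_v(b_v)$. But there is a genuine gap exactly where you place your ``main obstacle,'' and it is sharper than your closing paragraph suggests. Write $D(b)$ for the stationary flux of transitions that simultaneously deposit a grain at the root and land the non-root coordinate at $b$. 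Checking balance at a \emph{fixed} state $(b,c)$ — which is what stationarity of the product measure requires, not balance summed over $b$ at fixed $c$ — and using $\beta$-stationarity together with $\mu_\rootnode(h)=\tfrac{Y_\rootnode}{Y_\rootnode+x_\rootnode}\mu_\rootnode(h-1)$, the mid-level equations collapse to the identity
\begin{equation*}
D(b)\;=\;Y_\rootnode\,\beta(b)\qquad\text{for every } b\,,
\end{equation*}
and the levels $h=0$ and $h=m$ need exactly the same identity. In other words, under $\beta$ the event ``a grain reaches the root this step'' must be exactly independent of the landing configuration, with probability $Y_\rootnode$; your phrase ``the effective upward flux averaging to exactly $Y_\rootnode$'' (the summed statement $\sum_b D(b)=Y_\rootnode$) is strictly weaker and does not suffice. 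This pointwise factorization is the entire content of the theorem, and your proposal defers it to an unexecuted appeal to the wreath product. That appeal will not work as stated: the paper's wreath recursion (Proposition~\ref{stationaryrecursion}) is adapted to peeling a \emph{leaf}, where the coupling between the peeled coordinate and the rest has the simple block-tridiagonal form of \eqref{monomialoperators}; your decomposition peels the \emph{root}, a wreath product nested the other way around ($g(b,c)=(gb,g_b(c))$ with $g_b$ depending on the saturation pattern of $b$), and no analogue of Proposition~\ref{stationaryrecursion} is available for that nesting.

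The gap is closable, but it needs its own argument: the identity $D(b)=Y_\rootnode\,\beta(b)$ is essentially the boundary computation in the master-equation proof of Theorem~\ref{stationaryforfirstvarianttree} — the partition of incoming transitions ``ending with a sand grain leaving from the root,'' whose total weight against the product measure is shown there, via the filled-branch bookkeeping, to be $\bigl(\sum_{\ell\in L} y_\ell\bigr)\P(t)$ — applied to the forest $V\setminus\{\rootnode\}$ while keeping track of the landing state. For comparison, the paper avoids the issue entirely by decomposing at the opposite end: since $\tau_v=\theta_v$ whenever $T_v=1$, the leaf-peeling induction of Theorem~\ref{stationaryforfirstvarianttree} applies verbatim to the \landslide{} under the theorem's hypotheses, and only the base case changes — after all leaves are stripped one is left with the single root of threshold $m$, whose chain (add a grain with probability $\widetilde y_\rootnode$, reset with probability $\widetilde x_\rootnode$) is the classical winning-streak chain with stationary law \eqref{definemu}. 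That choice of where to split is what turns the proof into a two-line reduction instead of the flux computation your route still owes.
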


The proof of Theorem~\ref{thm.stationarymu} is given in
Section~\ref{subsection.stationary}. The following is an immediate
consequence of Theorem~\ref{thm.stationarymu}.

\begin{cor} \label{cor:partfn2}
Let $T_v=1$ for all $v\in V$, $v\neq \rootnode$ and $T_\rootnode = m$.
Then the partition function $Z_\tau$ of the \landslide{} defined on
$G_\tau$ is
\[
Z_\tau = \prod_{v \in V} (Y_{v}+x_{v})^{T_{v}}.
\]
\end{cor}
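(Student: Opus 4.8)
The plan is to read $Z_\tau$ directly off the product formula for the stationary distribution supplied by Theorem~\ref{thm.stationarymu}, using the characterization of the partition function as the least common denominator of the stationary probabilities. By that theorem, in the regime $T_v=1$ for $v\neq\rootnode$ and $T_\rootnode=m$, every stationary probability factors as $\P(t)=\prod_{v\in V}\mu_v(t_v)$. Hence it suffices to determine, at each vertex separately, the least common denominator of the single-vertex factors $\mu_v(h)$ as $h$ ranges over $0,\dots,T_v$, and then to multiply across vertices.

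First I would put each $\mu_v(h)$ over the candidate common denominator $(Y_v+x_v)^{T_v}$. From \eqref{definemu}, for $h<T_v$ the factor $\mu_v(h)=Y_v^h x_v/(Y_v+x_v)^{h+1}$ becomes $Y_v^h x_v(Y_v+x_v)^{T_v-h-1}$ over this denominator, while $\mu_v(T_v)=Y_v^{T_v}/(Y_v+x_v)^{T_v}$ already carries denominator $(Y_v+x_v)^{T_v}$. The largest power of $(Y_v+x_v)$ appearing in any denominator is therefore exactly $T_v$, attained for instance at $h=T_v-1$, where $\mu_v(T_v-1)=Y_v^{T_v-1}x_v/(Y_v+x_v)^{T_v}$.

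Next I would argue that no cancellation lowers this exponent, so that $(Y_v+x_v)^{T_v}$ is genuinely the least common denominator at $v$. This is the only point requiring care. Since $Y_v$ is a sum of leaf variables $y_\ell$ and $x_v$ is a separate indeterminate, the linear form $Y_v+x_v$ is irreducible and coprime to the monomial numerators $Y_v^{T_v-1}x_v$ and $Y_v^{T_v}$, so $\mu_v(T_v-1)$ is already in lowest terms with denominator $(Y_v+x_v)^{T_v}$. Moreover, the variable $x_v$ occurs only in the factor attached to $v$, so the forms $Y_v+x_v$ for distinct vertices are pairwise coprime even though the various $Y_v$ may share leaf variables. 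Consequently the least common denominator of the whole family $\{\P(t):t\in\Omega\}$ factors as $\prod_{v\in V}(Y_v+x_v)^{T_v}$, which is the claimed $Z_\tau$.

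As a consistency check I would match this against $\sum_{t\in\Omega}\P(t)=1$. Writing $\mu_v(h)=\nu_v(h)/(Y_v+x_v)^{T_v}$ with $\nu_v(h)$ the numerators above, the normalization $\sum_{h=0}^{T_v}\mu_v(h)=1$ noted after \eqref{definemu} gives $\sum_{h=0}^{T_v}\nu_v(h)=(Y_v+x_v)^{T_v}$, whence $\sum_{t}\prod_{v}\nu_v(t_v)=\prod_{v}(Y_v+x_v)^{T_v}=Z_\tau$, confirming that $\prod_v(Y_v+x_v)^{T_v}$ is indeed the normalization factor. There is no substantial obstacle here: the corollary is immediate once the coprimality argument pins down the least common denominator.
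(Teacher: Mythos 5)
Your proof is correct and matches the paper's approach: the paper states this corollary as an immediate consequence of Theorem~\ref{thm.stationarymu}, i.e.\ precisely the deduction you carry out, reading the common denominator $(Y_v+x_v)^{T_v}$ off the product measure \eqref{definemu}. Your coprimality argument (irreducibility of the linear forms $Y_v+x_v$ and their pairwise distinctness via the variables $x_v$) and the normalization check simply make explicit the details the paper leaves to the reader.
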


The eigenvalues for the transition matrices for the \landslide{} Markov chain
are given by a very elegant formula. Let $M_{\tau}$ be the transition matrix
for the Markov chain. For $S \subseteq V$, let
\begin{equation} \label{equation.xy}
y_S = \sum_{\ell\in L, \ell^{\downarrow} \subseteq S} y_\ell\;
\text{ and } \;
x_S = \sum_{v \in S} x_v,
\end{equation}
where $\ell^{\downarrow}$ is the set of all vertices on the path from $\ell$ to $\rootnode$.

\begin{thm}\label{charpolysandpile2}
The characteristic polynomial of $M_{\tau}$ is given by
\[
\det(M_{\tau} - \lambda \iden) =
\prod_{S \subseteq V} (\lambda-y_{S} -x_{S})^{T_{S^c}},
\]
where $S^{c} = V\setminus S$ and $T_{S} = \prod_{v \in S} T_v$.
\end{thm}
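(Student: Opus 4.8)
The plan is to realize $M_\tau$ as the transition operator of the random walk driven by the monoid $\Monoid=\langle \tau_v,\source_\ell \mid v\in V,\ \ell\in L\rangle$ acting on $\Omega$, and to read off its spectrum from the structure of this monoid. On $\mathbb{C}\Omega$ we have $M_\tau=\sum_{v\in V}x_v\,\tau_v+\sum_{\ell\in L}y_\ell\,\source_\ell$, where each generator contributes the $0$-$1$ matrix of its action (one $1$ in each column, so that columns sum to $1$). The engine of the argument is the $\RR$-triviality of $\Monoid$ proved in Section~\ref{section.Rtrivial}: an $\RR$-trivial monoid has only one-dimensional simple modules \cite{EilenbergB}, so $\mathbb{C}\Omega$ admits a composition series by $\Monoid$-submodules with one-dimensional quotients. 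Hence $M_\tau$ is simultaneously triangularizable and $\det(M_\tau-\lambda\iden)=\prod_i(\lambda-c_i)$, where $c_i$ is the scalar by which $M_\tau$ acts on the $i$-th quotient. The theorem is thus equivalent to two combinatorial statements: each scalar equals $\lambda_S:=y_S+x_S$ for a well-defined $S\subseteq V$, and the factor $\lambda_S$ occurs with multiplicity exactly $T_{S^c}=\prod_{v\notin S}T_v$.

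To index the eigenvalues I would first introduce the \emph{support homomorphism} $\supp\colon\Monoid\to(2^V,\cup)$ into the join-semilattice of subsets of $V$ under union, determined on generators by $\supp(\tau_v)=\{v\}$ and $\supp(\source_\ell)=\ell^\downarrow$; one checks this is a monoid homomorphism, so it induces a filtration of $\mathbb{C}\Omega$ indexed by $S\subseteq V$. The composition factor attached to $S$ is the one on which a generator $g$ acts as the identity precisely when $\supp(g)\subseteq S$ and raises the support otherwise, so its scalar is the total weight of such generators, namely $\sum_{v\in S}x_v+\sum_{\ell:\,\ell^\downarrow\subseteq S}y_\ell=x_S+y_S=\lambda_S$. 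This matches the definition \eqref{equation.xy}, and it recovers the extremes $\lambda_\emptyset=0$ (no generator has empty support) and $\lambda_V=\sum_v x_v+\sum_\ell y_\ell=1$.

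The delicate point, which I expect to be the main obstacle, is the multiplicity count: the one-dimensional quotient carrying $\lambda_S$ must appear exactly $T_{S^c}$ times in $\mathbb{C}\Omega$. This is \emph{not} visible on the standard basis $\{e_t\}_{t\in\Omega}$, since the naive diagonal entry of $M_\tau$ at a state $t$ is the weight of the generators literally fixing $t$, namely $\sum_{v:\,t_v=0}x_v+\sum_{\ell:\,t_w=T_w\ \forall w\in\ell^\downarrow}y_\ell$, which is \emph{not} of the form $\lambda_S$ (already for $\tree_3$ the correct eigenvalue for $S=\{a,\rootnode\}$ carries an extra $y_a$). The true multiplicity must instead be extracted from the associated graded of the support filtration. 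I would compute it by identifying, for each $S$, the idempotent $e_S\in\Monoid$ obtained by toppling and filling along $S$ until stable, describing its fixed states in $\Omega$, and applying the general multiplicity formula for random walks on $\RR$-trivial monoids of \cite{ayyer_schilling_steinberg_thiery.2013}. The bookkeeping should collapse by Möbius inversion over the Boolean lattice $2^V$,
\[
\sum_{S'\supseteq S}(-1)^{|S'\setminus S|}\prod_{v\notin S'}(T_v+1)=\prod_{v\notin S}T_v=T_{S^c},
\]
matching the count $\#\{t\in\Omega : t_v=0 \text{ exactly for } v\in S\}=\prod_{v\notin S}T_v$. Here the self-similar wreath-product structure of $\Monoid$ that drives the stationary-distribution computations in Section~\ref{section.stationary} should do the real work, organizing the idempotents $e_S$ and making an induction on leaves go through.

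As a consistency check the proposed factorization accounts for the full characteristic polynomial, since $\sum_{S\subseteq V}\prod_{v\notin S}T_v=\prod_{v\in V}(T_v+1)=|\Omega|$, which equals $\deg\det(M_\tau-\lambda\iden)$.
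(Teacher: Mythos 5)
Your proposal follows essentially the same route as the paper's own proof in Section~\ref{subsection.eigenvalues}: triangularize via the $\RR$-triviality of $M(\tree)$ (Theorem~\ref{isRtrivial}), index the one-dimensional characters by subsets $S\subseteq V$ through the idempotents $e_S=\bigl(\prod_{v\in S}\tau_v\bigr)^{\omega}$ --- your support homomorphism with $\supp(\tau_v)=\{v\}$ and $\supp(\source_\ell)=\ell^{\downarrow}$ is exactly the paper's identification of $\Lambda(M(\tree))$ with the Boolean lattice via Lemma~\ref{findLequiv} --- then read off $\lambda_S=x_S+y_S$ from Theorem~\ref{Rtrivialwalk} and obtain the multiplicities $T_{S^c}$ by counting fixed points $|e_S\Omega|=\prod_{v\notin S}(T_v+1)$ and M\"obius-inverting over $2^V$, which is precisely your displayed identity. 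The one caveat is that your phrase ``one checks this is a monoid homomorphism'' conceals the real technical content: well-definedness of the support map on the transformation monoid is equivalent to the paper's Lemma~\ref{constantstuff} and Lemma~\ref{findLequiv}, established by the leaf-induction in wreath-product coordinates that you correctly anticipate as doing ``the real work.''
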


We defer the proof of this theorem to Section~\ref{subsection.eigenvalues},
where monoid theoretic techniques are used.

\subsection{Rate of convergence for the \landslide}
\label{subsection.rates}

For the \landslide{}, we can make explicit statements about the rate of convergence
to stationarity and mixing times. Let $P^k$ be the distribution after
$k$ steps. The \defi{rate of convergence} is the total variation
distance from stationarity after $k$ steps, that is,
\[||P^k-\pi ||=\frac{1}{2}\sum_{t\in \Omega}|P^k(t)-\pi(t)|\] where $\pi$ is the stationary distribution.

\begin{thm}
  \label{theorem.rate of convergence}
  Define $p_x:=\min\{x_v \suchthat v\in V\}$ and $n:=|V|$. Then, as
  soon as $k\ge (n-1)/p_x$, the distance to stationarity of the
  \landslide{} satisfies
  \begin{displaymath}
    ||P^k-\pi || \leq \exp\left(- \frac{(kp_x-(n-1))^2}{2kp_x}\right)\,.
  \end{displaymath}
\end{thm}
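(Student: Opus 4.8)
The plan is to bound the mixing time via a coupling/stopping-time argument that exploits the $\RR$-triviality of the \landslide{} monoid. The key observation is that once every interior vertex has been toppled (i.e.\ each operator $\tau_v$ has been applied at least once) after the last time a given vertex received grains, the configuration becomes a deterministic function of the sequence of operators applied, independent of the starting state. This is precisely the ``forgetfulness'' encoded by $\RR$-triviality: there is a \emph{strong stationary time} (or coupling time) $\Lambda$ such that, for $k \ge \Lambda$, the distribution after $k$ steps equals the stationary distribution exactly. Concretely, I would argue that if the random word of operators chosen in the first $k$ steps contains each toppling generator $\tau_v$ (for $v \in V$) in an appropriate order that ``flushes'' the tree from leaves to root, then the resulting state no longer depends on the initial configuration, so the chain has coupled.

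The first step is to identify the coupling event. Since the monoid is $\RR$-trivial, there is a finite product of generators that acts as a constant map on $\Omega$ (a so-called left zero / reset word); the strategy in Proposition~\ref{prop:strong2}, which drives any state to $(0)_{v\in V}$ by applying the $\tau_v$ sufficiently often, exhibits exactly such a reset. The cleanest bound comes from noting that applying the $n-1$ toppling operators of the non-root interior vertices (or all $n$, to be safe) in a suitable topological order from the leaves down to the root produces a fixed configuration regardless of where we started. Thus coupling has occurred by step $k$ as soon as these $n-1$ resets have each been selected in the correct relative order within the first $k$ draws.

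The second step is the probabilistic estimate. Each toppling operator $\tau_v$ is selected at each step with probability $x_v \ge p_x$, so the number of ``successful'' resets accumulated in $k$ steps stochastically dominates a sum of Bernoulli trials with success probability $p_x$; its mean is $k p_x$, and we need at least $n-1$ of them. The event that fewer than $n-1$ successes occur in $k$ trials is a lower-tail event for a Binomial$(k,p_x)$ random variable, and the total variation distance to stationarity is bounded by the probability that coupling has \emph{not} yet happened, namely $\Pr[\,\mathrm{Bin}(k,p_x) < n-1\,]$. Applying a Chernoff--Hoeffding lower-tail bound with mean $\mu = k p_x$ and deviation $\mu - (n-1) = k p_x - (n-1)$ yields exactly
\[
  \Pr[\mathrm{Bin}(k,p_x) < n-1] \le \exp\!\left(-\frac{(kp_x-(n-1))^2}{2kp_x}\right),
\]
valid once $k p_x \ge n-1$, which is the stated hypothesis $k \ge (n-1)/p_x$.

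The main obstacle will be the first step: proving rigorously that a word containing each $\tau_v$ once, in the right order, genuinely resets the configuration to a state independent of the initial condition, and that the \emph{relative order} of the first occurrences is what matters rather than raw counts. This requires care because the source operators $\source_\ell$ interleave with the topplings and can refill vertices, so one must argue that it suffices to control the \emph{last} occurrence of each $\tau_v$ after all later source insertions, or equivalently that the minimal ideal / set of constant maps in the monoid is reached. I expect this to be handled by the recursive leaf-removal structure mentioned in the remarks and by the $\RR$-triviality established in Section~\ref{section.Rtrivial}: in an $\RR$-trivial monoid the relevant reset is a function only of which generators have appeared and in what order relative to one another, which reduces the combinatorics to counting first occurrences and makes the Binomial comparison legitimate.
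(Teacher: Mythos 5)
Your overall skeleton matches the paper's proof: bound $\|P^k-\pi\|$ by the probability that the composed random map is not yet a constant function (the paper invokes the Brown--Diaconis coupling-from-the-past bound, Theorem~\ref{thm.couplingpast}, in place of your grand-coupling inequality --- and note that what you describe is a coalescence time, not a strong stationary time, though the resulting inequality is still valid), then argue that progress toward a constant map occurs with probability at least $p_x$ per step, and finish with the Chernoff lower tail for $\mathrm{Bin}(k,p_x)$, valid exactly when $k\ge (n-1)/p_x$. However, the step you yourself flag as ``the main obstacle'' is a genuine gap, and it is precisely where the paper does its real work. Your claim that only the relative order of first occurrences of the $\tau_v$ matters is not self-evident: a later application of $\tau_w$ at a vertex $w$ that is not yet under control dumps an \emph{unknown} number of grains onto the vertices below $w$, so a vertex that was already ``reset'' can become dependent on the initial state again; source operators similarly push grains through the tree. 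The paper closes this by attaching to each monoid element $m$ its minimal \emph{deterministic upset} $U(m)$, the smallest upset $U$ such that $mt=mt'$ whenever $t\equiv_U t'$, and setting $u(m)=|U(m)|$. Two claims are then proved: $u$ is non-increasing along the right random walk, and right-multiplying by $\tau_v$ for $v$ a \emph{minimal} element of $U(m)$ strictly decreases $u$ (since $(\tau_v t)_v=0$ forces the value at $v$). This monotone statistic replaces your word combinatorics entirely and makes ``each step succeeds with probability at least $p_x$'' immediate, which is what legitimizes the binomial comparison. To repair your forward version you would need the analogous invariant: the set of coordinates already determined can be maintained as an upset (a downward grain path meets a determined upset in a prefix, so its effect there is deterministic), and a vertex may be credited as newly determined only when every vertex above it already is --- essentially rediscovering the paper's Claims~1 and~2.

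There is also a small counting slip that your hedging ``or all $n$, to be safe'' conceals: you do need all $n$ topplings including the root, since after flushing everything above it the root's content is still unknown. With $n$ required successes, the failure event is ``at most $n-1$ successes in $k$ trials,'' i.e.\ $\Pr[\mathrm{Bin}(k,p_x)\le n-1]$ (your display should have $\le n-1$ rather than $<n-1$), and the Chernoff bound with mean $kp_x$ and threshold $n-1$ then yields exactly the stated exponent $-(kp_x-(n-1))^2/(2kp_x)$, matching the theorem; with only $n-1$ required successes the natural threshold would be $n-2$ and the displayed bound would not come out on the nose.
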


The proof of Theorem~\ref{theorem.rate of convergence} is given in
Section~\ref{subsection.rate of convergence}. Note that the
bound does not depend on the thresholds.

The \defi{mixing time}~\cite{MarkovMixing} is the number of steps $k$ until $||P^k-\pi || \le e^{-c}$
(where different authors use different conventions for the value of $c$). Using
Theorem~\ref{theorem.rate of convergence} we require
\[
(kp_x-(n-1))^2 \ge 2kp_xc\,,
\]
which shows that the mixing time is at
most $\frac{2(n+c-1)}{p_x}$.
If the probability distribution $\{x_v, y_\ell \suchthat v\in V, \ell
\in L\}$ is uniform, then $p_x$ is of order $1/n$ and the mixing time is
of order at most $n^2$.

The above bounds could be further improved.

\subsection{The one-dimensional models}
\label{subsection.1dim}

When the arborescence is a line, both the \trickle{} and the
\landslide{} simplify considerably.
First of all, the notation can be made more concrete.  We may assume that the set of vertices $V$ is
$\{1,\dots,n\}$, which are labeled consecutively from the unique
source $1$ to the root $n$. The threshold vector $T$ is
considered as an $n$-tuple of positive integers.  We denote the
probability of the source operator $\source_{1}$ by $y = y_{1}$ and
the probability of toppling at vertex $i$ by $x_{i}$ in both models
for the sake of consistency. Note that $Y_{i} = y$ for all $i$.
Examples of the one-dimensional models of length 2 and 3 are illustrated
in Figure~\ref{figure.G theta1d}.

The \trickle{} can be thought of as a natural variant of the totally
asymmetric simple exclusion process (TASEP), whose stationary
distribution was computed exactly in \cite{dehp1993}.  In the case
when all the thresholds are equal and all rates 1, this model has been
introduced under the name of the \defi{drop-push process} on the ring
\cite{srb1996}. The model has been generalized to include arbitrary
thresholds and probabilities, but still on the ring \cite{bt1997}. It
is also related to the $m$-TASEP, which has been studied on $\Z$
\cite{sw1998}.  Toppling operations on partitions and compositions
have also been studied from an order-theoretic point of view
\cite{gmp2002}.

\begin{figure}
  \begin{bigcenter}
    $\vcenter{
      \begin{tabular}{c c}
        \raisebox{2cm}{\includegraphics[width=.48\textwidth]{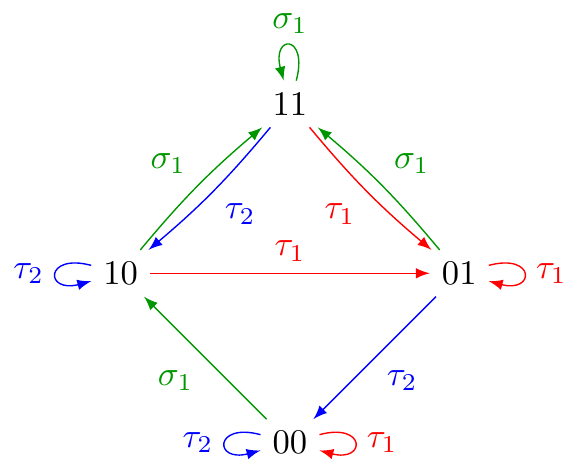}}
        &
        \includegraphics[width=.6\textwidth]{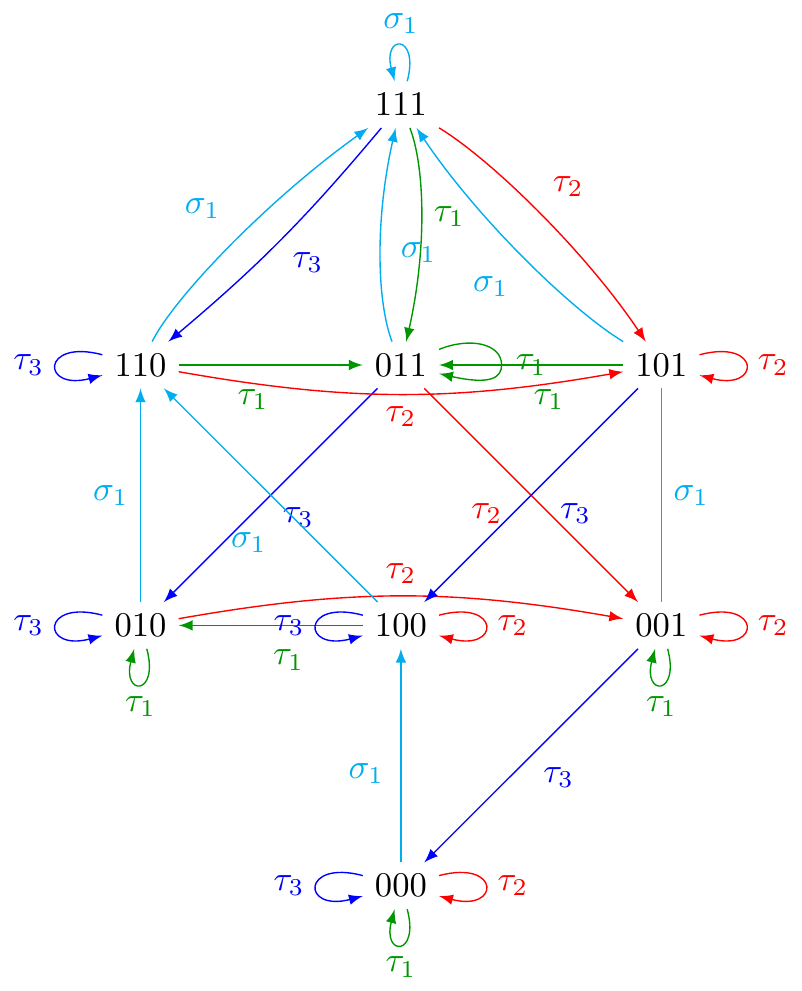}
      \end{tabular}
    }$
    \end{bigcenter}
    \caption{
      The graph $G_\theta=G_\tau$ of the Markov chain with unit
      thresholds for the line $1\rightarrow 2$, the line
      $1\rightarrow 2\rightarrow 3$, respectively.
    \label{figure.G theta1d}
  }
\end{figure}

The stationary distribution for the \trickle{} is a
product measure for all $n$ and any transition probabilities, unlike
for the TASEP.  This follows from
Theorem~\ref{stationaryforfirstvarianttree}.

\begin{cor}\label{cor:stationaryforfirstvariant1d}
The stationary distribution of the Markov chain defined by $G_{\theta}$ is a
product measure,
\[
\P(v) = \prod_{j=1}^n \rho_j(v_j),
\]
where $\rho$ is defined in \eqref{defrho}.
\end{cor}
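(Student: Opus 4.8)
The plan is to obtain this as a direct specialization of Theorem~\ref{stationaryforfirstvarianttree}, which already supplies the stationary distribution of the Trickle-down sandpile model on an \emph{arbitrary} arborescence as a product measure. No new dynamics or spectral input is needed: the only task is to verify that the general product formula collapses to the stated one-dimensional expression, so the argument is essentially bookkeeping.

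First I would record the combinatorial data of the line. When the arborescence is the line $1\to 2\to \cdots \to n$, there is a single leaf, namely the source vertex $1$, and its downset $1^\downarrow$ is the entire vertex set $\{1,\dots,n\}$. Consequently, for every vertex $j$ the set $L_j=\{\ell\in L \mid j\in \ell^\downarrow\}$ equals $\{1\}$, so that $Y_j=\sum_{\ell\in L_j} y_\ell = y_1 = y$, exactly as observed in the preamble to this subsection. This is the one point where anything at all must be checked, and it rests entirely on the elementary fact that on a line every vertex lies on the unique root-path of the single source.

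Next I would substitute $Y_v=y$ into the definition~\eqref{defrho} of $\rho_v$, which yields
\[
\rho_j(h)=\frac{y^h\, x_j^{T_j-h}}{\sum_{i=0}^{T_j} y^i\, x_j^{T_j-i}},
\]
precisely the weight $\rho_j$ appearing in the statement. Applying Theorem~\ref{stationaryforfirstvarianttree} with $V=\{1,\dots,n\}$ then gives the stationary distribution as $\P(v)=\prod_{j=1}^n \rho_j(v_j)$, which is manifestly of product form. There is no genuine obstacle here, since all of the analytic content resides in Theorem~\ref{stationaryforfirstvarianttree}; the corollary follows by pure specialization once the identity $Y_j=y$ has been noted.
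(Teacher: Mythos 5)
Your proof is correct and matches the paper's approach exactly: the paper also derives Corollary~\ref{cor:stationaryforfirstvariant1d} as an immediate specialization of Theorem~\ref{stationaryforfirstvarianttree}, having noted in the preamble that $Y_i = y$ for all $i$ on the line. Your explicit verification that $L_j=\{1\}$ for every vertex $j$ is the same bookkeeping the paper leaves implicit.
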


The \landslide{} is a natural model for the transport of large
self-organizing objects such as macromolecules. These have been
considered in biophysics since at least the 1960s
\cite{ktasep1,ktasep2}.  However, no exact results are known for such
models to the best of our knowledge.

For nonequilibrium statistical systems, it is very rare to have a
concrete example of Markov chains where all the eigenvalues of the
transition matrix are known for any choice of rates. Some examples are
given in \cite{ayyer_strehl_2010,ayyer_2011,ayyer_strehl_2013}. For
models related to abelian sandpiles, there are some conjectures
about eigenvalues in \cite{sd2009, dhar-manna1999}.

\begin{cor}\label{cor:charpolysandpile1d}
The characteristic polynomial of $M_{\tau}$ on $[n]$ is given by
\[
\det(M_{\tau} - \lambda \iden) = (\lambda-x_{0}-x_{[n]})
\prod_{\phi \subseteq S \subsetneq [n]} (\lambda -x_{S})^{T_{S^{c}}},
\]
where $S^{c} = [n]\setminus S$,  $x_{S} = \sum_{i \in S} x_{i}$ and
$T_{S} = \prod_{i \in S} T_{i}$.	
\end{cor}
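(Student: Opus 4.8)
The plan is to obtain Corollary~\ref{cor:charpolysandpile1d} directly from Theorem~\ref{charpolysandpile2} by specializing the general product formula to the line $1 \to 2 \to \cdots \to n$ and simplifying. First I would record the combinatorial data of the one-dimensional arborescence: it has a single leaf, namely the source vertex $1$, so $L = \{1\}$, and its unique source probability is $y = y_1$ (written $x_0$ in the statement, viewing the injection as occurring at a virtual vertex $0$). Since the graph is a line with root $n$, the downset of the only leaf is the entire path, $1^{\downarrow} = [n]$.

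Next I would evaluate the quantity $y_S$ from \eqref{equation.xy} for each $S \subseteq [n]$. Because the sum defining $y_S$ ranges over leaves $\ell$ with $\ell^{\downarrow} \subseteq S$, and here the only leaf has $1^{\downarrow} = [n]$, the inclusion $\ell^{\downarrow} \subseteq S$ holds precisely when $[n] \subseteq S$, that is, only for $S = [n]$. Hence $y_{[n]} = y$ while $y_S = 0$ for every proper subset $S \subsetneq [n]$. I would then split the product of Theorem~\ref{charpolysandpile2} into the single term $S = [n]$ and the terms with $\emptyset \subseteq S \subsetneq [n]$. The term $S = [n]$ contributes $(\lambda - y - x_{[n]})^{T_{\emptyset}}$; since $T_{\emptyset}$ is the empty product and equals $1$, this reduces to the linear factor $\lambda - x_0 - x_{[n]}$. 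Each remaining term has $y_S = 0$ and so contributes $(\lambda - x_S)^{T_{S^{c}}}$ with $S^{c} = [n] \setminus S$, which is exactly the product appearing in the statement.

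The only point requiring genuine attention, though it is a mild one, is the observation that the condition $\ell^{\downarrow} \subseteq S$ isolates exactly $S = [n]$: this is what collapses the general product over all $2^n$ subsets into one distinguished linear factor together with the homogeneous-looking product over proper subsets. It rests on the fact that a line has a unique leaf whose downset is the whole vertex set, in sharp contrast to a branching arborescence, where many subsets $S$ can contain some leaf's downset and thereby acquire a nonzero $y_S$. Once this is noted, the corollary follows by substitution, using only the bookkeeping that $T_{\emptyset} = 1$ and the relabeling $y = x_0$; no new spectral computation is needed beyond Theorem~\ref{charpolysandpile2}.
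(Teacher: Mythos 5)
Your proposal is correct and is precisely the derivation the paper intends: the corollary is the specialization of Theorem~\ref{charpolysandpile2} to the line, where the unique leaf $1$ has $1^{\downarrow}=[n]$, so $y_S$ vanishes for every proper subset $S\subsetneq[n]$ and equals $y=y_1$ (relabeled $x_0$) only for $S=[n]$, whose factor carries exponent $T_{\emptyset}=1$. You also correctly identify the one substantive observation — that $\ell^{\downarrow}\subseteq S$ forces $S=[n]$ — which is exactly what collapses the general product into the stated form.
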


The stationary distribution of the \landslide{} is not a product
measure, but it still has some interesting structure. The following
conjecture follows from looking at the partition function for various
thresholds vectors up to size 6. We have a similar conjecture for
\landslide{} on trees, but is much more complicated to write down.

\begin{conj} \label{conj:partfn2-1d}
Given the threshold vector $T$ for the \landslide{} on $[n]$, let $k
:= \min\{i \in [n] \suchthat T_i>1\}$.
Then, the partition function $Z_\tau$ is given by
\[
Z_\tau = \prod_{i=1}^{k-1} (y_1 + x_i)^{T_i}
\prod_{\phi \subsetneq S \subseteq \{k,\dots,n\}} (y_1 + x_S)^{T_{\min\{i \suchthat i\in S\}}}.
\]
\end{conj}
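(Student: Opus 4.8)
The plan is to prove the formula by induction on $n$, exploiting the fact that in the \landslide{} on a line grains move only toward the root, so the sub-configuration on any initial segment $\{1,\dots,m\}$ evolves autonomously: the process restricted to those coordinates is Markov on its own and coincides with the standalone \landslide{} on $\{1,\dots,m\}$ (with $m$ as root, grains simply leaving when $\tau_m$ fires). This upstream autonomy is the line-version of the wreath-product self-similarity of Section~\ref{section.monoid sandpile}, and it licenses a bottom-up induction that appends one vertex (the new root) at a time. The target is to show that appending vertex $m$ multiplies the partition function by
\[
G_m \;=\; \prod_{\substack{S\ni m\\ S\subseteq\{k,\dots,m\}}} (y_1 + x_S)^{T_{\min S}} .
\]
As a consistency check, the conjectured right-hand side satisfies exactly this recursion: splitting the subsets of $\{k,\dots,m\}$ according to whether they contain $m$ gives the factor for $\{k,\dots,m-1\}$ times $G_m$.

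For the base regime I would invoke existing results. While every threshold seen so far equals one, i.e.\ for segments shorter than $k$, the \landslide{} and the \trickle{} coincide (Remark~\ref{remark.trickle and landslide equivalence}), so the partition function is the product form $\prod_{i<k}(y_1+x_i)$ of Corollary~\ref{cor:partfn1}; this establishes that the prefix $1,\dots,k-1$ decouples, since a threshold-one vertex cannot store grains and its marginal is the two-state chain of the single-vertex computation, passing grains downward as a rate-$y_1$ source. Appending the first buffering vertex $k$ (with $T_k>1$) should then contribute $(y_1+x_k)^{T_k}$, which is precisely the single high-threshold situation of Corollary~\ref{cor:partfn2}. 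Thus the induction reaches $Z_\tau=\prod_{i<k}(y_1+x_i)\,(y_1+x_k)^{T_k}$ for the segment $\{1,\dots,k\}$, matching the conjecture.

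The heart of the argument, and the main obstacle, is the inductive step for $m>k$: once a buffering vertex is present upstream, \emph{every} subsequently appended vertex participates in the full-subset structure regardless of its own threshold, producing the exponentially many factors in $G_m$. This is exactly where the stationary measure fails to be a product and the correlations created by the buffer must be controlled. I would compute the conditional law of the root $m$ given the (autonomous) upstream configuration and extract the denominator factor $G_m$; the appearance of all upstream subsets $S\ni m$ reflects that the effective arrival process of grains at $m$ depends on the joint upstream dynamics rather than on a single aggregated rate, and the exponent $T_{\min S}$ should emerge from the internal height-counts of the minimal active vertex of $S$. To make this rigorous I would use that the \landslide{} monoid is $\RR$-trivial (Section~\ref{section.Rtrivial}) together with the explicit stationary-distribution formula for random walks on $\RR$-trivial monoids from~\cite{ayyer_schilling_steinberg_thiery.2013}, whose summands are indexed by chains in the lattice of supports; for the line this lattice is indexed by the subsets of toppled vertices, and summing the contributions over all states should yield the claimed factorization. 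The known eigenvalues (Corollary~\ref{cor:charpolysandpile1d}), indexed by the complementary family of subsets, furnish a useful consistency check but do not by themselves pin down the normalization, since the partition function requires the full $1$-eigenvector and not merely the spectrum; closing that gap, and proving the full-subset participation of threshold-one vertices lying below a buffer, is where the difficulty concentrates.
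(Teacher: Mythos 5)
You should first be aware that the statement you set out to prove is Conjecture~\ref{conj:partfn2-1d}: the paper offers \emph{no proof} of it. The authors state explicitly that it was obtained ``from looking at the partition function for various thresholds vectors up to size 6,'' with the single analytic anchor that it agrees with Corollary~\ref{cor:partfn2} when $k=n$. So there is no paper argument to compare against, and the honest question is whether your proposal closes the open problem. It does not, by your own admission, and it is worth naming precisely which parts are solid and where the gap sits. What is correct: (a) the upstream-autonomy observation is right, and is exactly the line version of the paper's wreath-product triangularity (Table~\ref{tablerecursion}, Remark~\ref{remark.right_regular_representation_wreath}): the projection of the chain onto an initial segment $\{1,\dots,m\}$ is a lazy copy of the standalone \landslide{} on that segment, with holding probability $\sum_{j>m}x_j$, and laziness does not alter the stationary law; (b) your consistency check that the conjectured formula satisfies $Z_{[m]}=Z_{[m-1]}\cdot G_m$ is a correct algebraic identity; (c) the base regime is correctly handled by Remark~\ref{remark.trickle and landslide equivalence} together with Corollaries~\ref{cor:partfn1} and~\ref{cor:partfn2}.

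The genuine gap is the inductive step for $m>k$, which is the entire content of the conjecture, and your proposed route through it has a concrete circularity. Marginal autonomy tells you the law of the upstream configuration; it tells you nothing about the joint law, and beyond the first buffer the stationary measure is not a product, so the conditional distribution of $t_m$ given the upstream state genuinely varies with that state. There is therefore no single ``denominator factor'' $G_m$ attached to vertex $m$ that you can extract state by state: to compute the conditional law of $t_m$ you would need to sum transition contributions weighted by the unknown joint stationary measure, which is what you are trying to determine. (Contrast this with Proposition~\ref{stationaryrecursion}, where the product ansatz lets the paper push the recursion through precisely because the conditional law is constant; that mechanism is unavailable here by hypothesis.) Your fallback --- the stationary-distribution formula for $\RR$-trivial monoid walks from~\cite{ayyer_schilling_steinberg_thiery.2013}, summing over chains in $\Lambda(M(\tree))$ --- is a sensible place to look, since Section~\ref{subsection.eigenvalues} computes $\Lambda(M(\tree))$ as the Boolean lattice of subsets of $V$, and it would at least explain why irreducible factors $y_1+x_S$ indexed by subsets appear. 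But turning that sum into the claimed factorization of the common denominator, including the exponents $T_{\min S}$ and the participation of threshold-one vertices lying below the buffer, is left entirely unexecuted, and you correctly note that the spectrum (Corollary~\ref{cor:charpolysandpile1d}) cannot pin down the $1$-eigenvector. In short: your proposal is a reasonable research plan that reproduces the known boundary cases and correctly locates the difficulty, but it does not constitute a proof, and the statement remains open in the paper as well.
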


One can check that this matches Corollary~\ref{cor:partfn2} when
$k=n$, that is, when the thresholds are one everywhere except at the
root.

\section{Monoids for sandpile models}
\label{section.monoid sandpile}

We will now show how the two variants of the sandpile model can be modeled via the wreath product
of left transformation monoids~\cite{EilenbergB}.  This section is particularly inspired by the theory of
self-similar groups and automaton groups~\cite{GNS,GrigZuk,KSS,selfsimilar}. The wreath product
formulation makes it possible to give a simple proof of the stationary distribution for the \trickle{}
and to prove $\RR$-triviality of the underlying monoid of the \landslide. Using
the results in~\cite{steinberg.2006,steinberg.2008} then yields our results for eigenvalues and multiplicities.

This section is organized as follows. In Section~\ref{subsection.monoid} we review definitions and concepts
from monoid theory that are necessary to prove our theorems. In Section~\ref{subsection.gen wreath}
we present generalities on wreath products. The two variants of the nonabelian sandpile model are reformulated
in Section~\ref{subsection.wreath} in terms of the wreath product.
This formulation will be used in Sections~\ref{section.stationary} and~\ref{section.Rtrivial} to prove
our statements about the stationary distribution, eigenvalues, and rates of convergence.

\subsection{Posets and monoids}
\label{subsection.monoid}

A \defi{partially ordered set} (poset) $P$ is a set with a reflexive, transitive and asymmetric relation $\leq$.
The set of vertices of an arborescence is partially ordered by $v\leq w$ if there is a path from $w$ to $v$.  With this
ordering the root is the smallest element and the leaves are the maximal elements.  An \defi{upset} $U$ in a poset
$P$ is a subset such that $x\in U$ and $y\geq x$ implies $y\in U$.  A \defi{downset} $D$ is
defined dually, $x\in D$ and $y\leq x$ implies $y\in D$.
Denote by $x^{\downarrow}=\{y\in P\mid y\leq x\}$. (This is consistent with the usage in \eqref{equation.path}.) A poset is called a \defi{lattice} if it has a greatest element, a
least element and any two elements have a least upper bound (join) and a greatest lower bound (meet).  For a
finite poset to be a lattice it is enough for it to have a greatest element and binary meets.

A finite \defi{monoid} $M$ is a finite set with an associative
multiplication and an identity element $\id$.
 If $X\subseteq M$, then the submonoid $\langle X\rangle$
generated by $X$ is the smallest submonoid of $M$ containing $X$.  It consists of all (possibly empty) iterated
products of elements of $X$.
Basic references for the theory of monoids are~\cite{clifford_preston.1961,Howie,Higgins}.  Books specializing
in finite monoids are~\cite{EilenbergB,Arbib,Almeida,qtheor,Pin}.

An \defi{action} of a monoid $M$ on a set $\Omega$ is a mapping $M\times \Omega\to \Omega$, written as juxtaposition,
such that $\id x=x$ and $(mm')x=m(m'x)$ for all $m,m'\in M$ and $x\in \Omega$.
Given a probability $\P$ on $M$, we can define a Markov chain $\mathcal M$ on $\Omega$ by defining the
transition probability from $x$ to $y$ to be the probability that $mx=y$ if $m$ is distributed according to $\P$.
The so-called ``random mapping representation'' of a Markov chain~\cite{MarkovMixing} asserts that all finite
state Markov chains can be realized in this way.

The monoid point of view brings a new perspective: the generators of
the monoid act on itself on both sides. This gives rise to the left
(respectively right) \defi{Cayley graph}: its vertex set is $M$, and for
$m,m'\in M$ and $g$ a generator, there is an edge
$m\stackrel{g}{\mapsto} m'$ whenever $m'=gm$ (respectively $m'=mg$). See
Figures~\ref{figure.r_cayley_graph} and~\ref{figure.l_cayley_graph} for examples. Notice that the left
Cayley graph contains the graph of the Markov chain as its lowest
strongly connected component comparing with Figure~\ref{figure.G theta1d}
(a usual feature; the elements of this
component are the constant functions) and is therefore no simpler to
study than the Markov chain itself. On the other hand, the right
Cayley graph is acyclic! This is a strong feature, called
$\RR$-triviality, which we are going to introduce next and
which is used extensively throughout the paper.
As we will show in Section~\ref{subsection.Rtrivial}, the monoids associated
to the \landslide{} introduced on this paper are $\RR$-trivial.

\begin{sidewaysfigure}
 \vspace{10cm}
  \begin{center}
     \hspace{-1.5cm}
  \scalebox{0.5}{\includegraphics{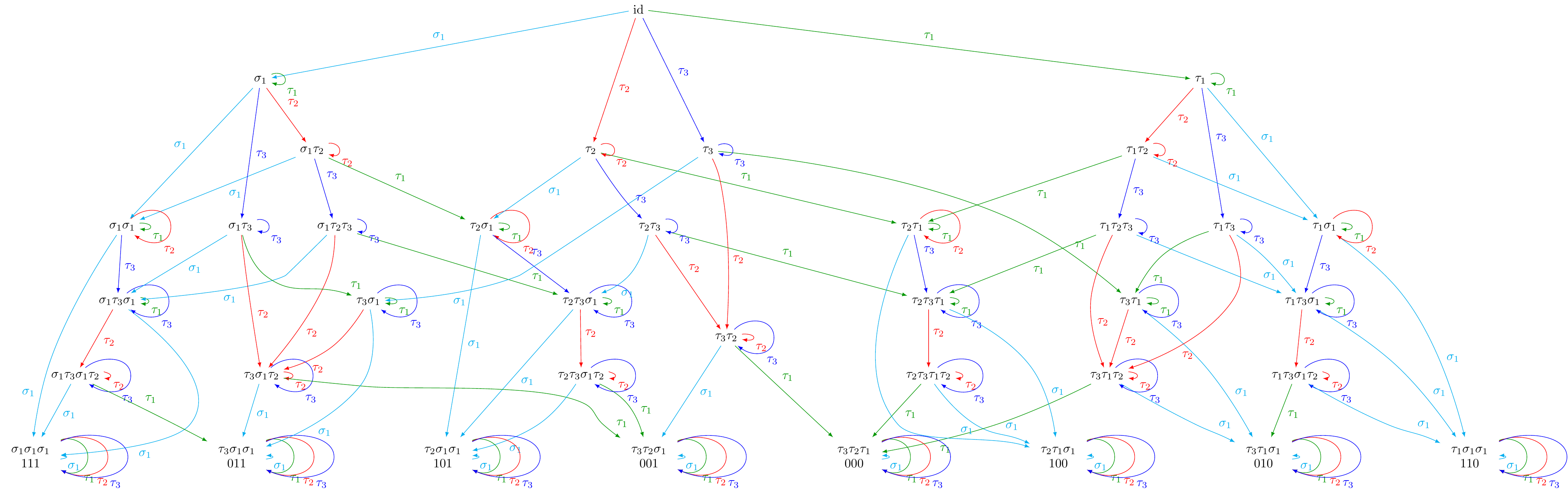}}
  \end{center}
    \caption{The right Cayley graph of the monoid for the one-dimensional
    \landslide{} with three sites as in Figure~\ref{figure.G theta1d}. Each vertex
    displays a reduced word for the corresponding element $m$ of the
    monoid; if $m$ is a constant function, the value of this function is appended.
    \label{figure.r_cayley_graph}
    }
\end{sidewaysfigure}

\begin{figure}
  \includegraphics[width=\textwidth]{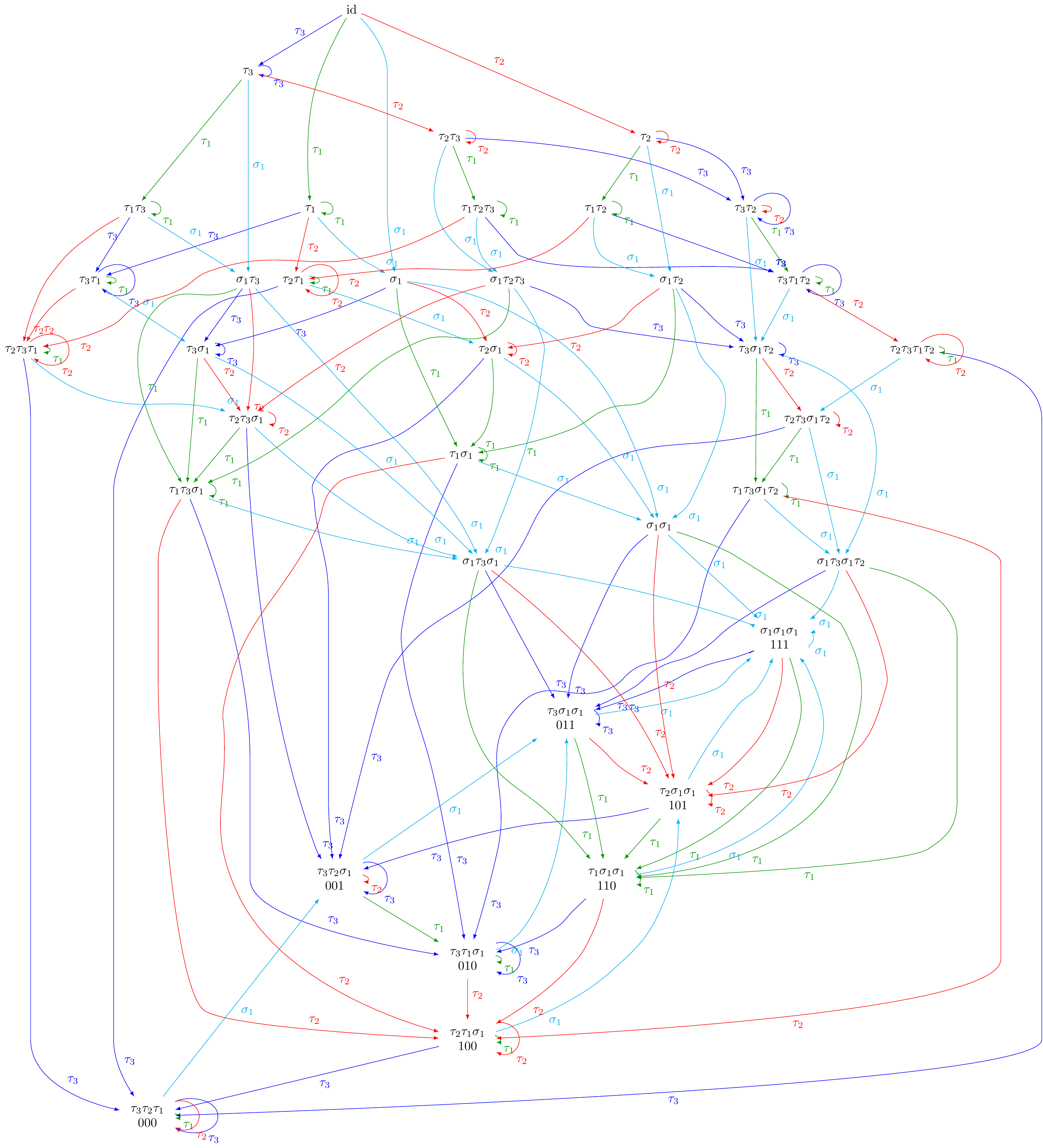}
  \caption{The left Cayley graph of the monoid for the one-dimensional
    \landslide{} with three sites. The constant function
    subgraph agrees with Figure~\ref{figure.G theta1d}.
  \label{figure.l_cayley_graph}
  }
\end{figure}

An element $e$ of a monoid $M$ is called \defi{idempotent} if $e=e^2$. The set of idempotents of $M$ is denoted
$E(M)$.  If $M$ is finite, then each element $m$ has a unique idempotent positive power, traditionally written
$m^{\omega}$. Let $X$ be a generating set for a monoid $M$. Then the \defi{content} of an idempotent $e$ is
defined to be the set $c(e)=\{x\in X\mid e\in MxM\}$.
In other words, $x\in c(e)$ if and only if $e=mxm'$ for some $m,m'\in M$.

A monoid is \defi{$\RR$-trivial} if $aM = bM$ for $a,b\in M$ is equivalent to $a=b$.
Similarly, a monoid is \defi{$\mathscr{J}$-trivial} if $M aM = M bM$ for $a,b\in M$
is equivalent to $a=b$. If $M$ is $\mathscr J$-trivial, then it is well-known
that $e=e'$ if and only if $c(e)=c(e')$ for $e,e'\in E(M)$.  See for instance Chapter~8 of~\cite{Almeida}.
Also a monoid $M$ is $\RR$-trivial if and only if, for each $e\in E(M)$, one has $ex=e$ for all $x\in c(e)$;
see Theorem~5.1 of~\cite{BrzozowskiFich}.  The classes of $\mathscr J$-trivial monoids and $\RR$-trivial
monoids are easily verified to be closed under taking submonoids.

Associated to an $\RR$-trivial monoid is a lattice.  The following can all be extracted from
Chapter~8 of~\cite{Almeida} or Chapter~6 of~\cite{qtheor}, where things are considered in much greater
generality.  A more recent exposition, closer to our viewpoint, can be found in~\cite{MargolisSteinbergQuiver}.
We say that two idempotents $e,f\in M$ are \defi{$\LL$-equivalent}  if $ef=e$ and $fe=f$.  The equivalence
class of $e$ will be denoted by $[e]$.  The set $\Lambda(M)$ of equivalence classes of idempotents is a lattice where
the order is given by $[e]\leq [f]$ if and only if $ef=e$.  The largest element of $\Lambda(M)$ is the class of the
identity and the meet of two elements $e,f$ is $[(ef)^{\omega}]$.

The following theorem is a special case of the results of the third author~\cite{steinberg.2006,steinberg.2008}.
\begin{thm}\label{Rtrivialwalk}
Let $M$ be an $\RR$-trivial monoid acting on a set $\Omega$.  Let $\P$ be a probability distribution on $M$ and
let $\mathcal M$ be the Markov chain with state set $\Omega$, where the transition probability from $x$ to $y$ is the
probability that $mx=y$ (if $m$ is chosen according to $\P$). Then the transition matrix has an eigenvalue
\[\lambda_{[e]}=\sum_{[m^{\omega}]\geq [e]} \P(m)\]
for each $[e]\in \Lambda(M)$.  The multiplicities $\boldsymbol{m}_{[e]}$, $[e]\in \Lambda(M)$, are determined recursively by
the equation \[|e\Omega| = \sum_{[f]\leq [e]} \boldsymbol{m}_{[f]}.\]
\end{thm}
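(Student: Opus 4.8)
The plan is to realize the transition matrix as the image of the probability element $\P=\sum_{m\in M}\P(m)\,m$ of the monoid algebra $\mathbb{C}M$ under the linear representation $\rho\colon \mathbb{C}M\to \mathrm{End}(\mathbb{C}\Omega)$ extending the action of $M$ on $\Omega$. Concretely, I view $\mathbb{C}\Omega$ as a left $\mathbb{C}M$-module with basis $\Omega$, where $\rho(m)$ sends a basis vector $x$ to $mx$; then $\rho(\P)_{y,x}=\sum_{m\colon mx=y}\P(m)$ is exactly the probability of moving from $x$ to $y$, so $\rho(\P)$ coincides with the transition matrix in the paper's column-stochastic convention (any difference of convention is a transpose, which does not affect eigenvalues). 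The eigenvalues and multiplicities will then be read off from a composition series of the module $\mathbb{C}\Omega$.

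First I would invoke the representation theory of $\RR$-trivial monoids from \cite{steinberg.2006,steinberg.2008}. Because $M$ is $\RR$-trivial, every simple $\mathbb{C}M$-module is one-dimensional, and the simple modules are indexed by the lattice $\Lambda(M)$ of $\LL$-classes of idempotents: to $[e]\in\Lambda(M)$ corresponds the character $\chi_{[e]}\colon M\to\{0,1\}$ with $\chi_{[e]}(m)=1$ if $[m^{\omega}]\geq[e]$ and $\chi_{[e]}(m)=0$ otherwise. Choosing a composition series of $\mathbb{C}\Omega$, each factor is isomorphic to some simple module $S_{[e]}$ on which every $m$ acts by the scalar $\chi_{[e]}(m)$; let $\boldsymbol{m}_{[e]}$ be the number of factors isomorphic to $S_{[e]}$. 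In a basis adapted to this filtration, $\rho(\P)$ becomes triangular with diagonal entries $\chi_{[e]}(\P)=\sum_{m\in M}\P(m)\chi_{[e]}(m)=\sum_{[m^{\omega}]\geq[e]}\P(m)=\lambda_{[e]}$, each occurring $\boldsymbol{m}_{[e]}$ times. This shows that $\lambda_{[e]}$ is an eigenvalue of the transition matrix with multiplicity $\boldsymbol{m}_{[e]}$; taking $[e]=[\id]$ (the top of $\Lambda(M)$) and using $\id\,\Omega=\Omega$ will confirm $\sum_{[e]}\boldsymbol{m}_{[e]}=|\Omega|$, so these account for all of the eigenvalues.

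It remains to pin down the multiplicities through the stated recursion. Fix an idempotent $e\in E(M)$. Since $e^2=e$, the operator $\rho(e)$ is idempotent, hence a projection, so $\mathrm{rank}\,\rho(e)=\mathrm{tr}\,\rho(e)$. On one hand, $\rho(e)$ fixes each basis vector $y\in e\Omega$ (there $ey=y$) and has image spanned by $e\Omega$, so $\mathrm{rank}\,\rho(e)=|e\Omega|$. On the other hand, computing the trace along the composition series gives $\mathrm{tr}\,\rho(e)=\sum_{[f]\in\Lambda(M)}\boldsymbol{m}_{[f]}\,\chi_{[f]}(e)$, and $\chi_{[f]}(e)=1$ exactly when $[e]=[e^{\omega}]\geq[f]$, that is when $[f]\leq[e]$. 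Equating the two expressions yields $|e\Omega|=\sum_{[f]\leq[e]}\boldsymbol{m}_{[f]}$, which determines the $\boldsymbol{m}_{[e]}$ recursively from the bottom of $\Lambda(M)$ upward (and the left-hand side depends only on $[e]$, since $ef=e$ and $fe=f$ force $e\Omega\subseteq f\Omega$ and $f\Omega\subseteq e\Omega$).

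The main obstacle is the input from monoid representation theory: establishing that $\mathbb{C}M$ is triangularizable with one-dimensional simples indexed by $\Lambda(M)$ and, above all, identifying the character $\chi_{[e]}$ explicitly as the indicator of the condition $[m^{\omega}]\geq[e]$. This is precisely where $\RR$-triviality enters, equivalently the facts that the right Cayley graph is acyclic and that $ex=e$ for every $x\in c(e)$, and it is supplied by \cite{steinberg.2006,steinberg.2008}. Once that structure is in hand, the triangular eigenvalue computation and the rank-equals-trace multiplicity argument above are routine.
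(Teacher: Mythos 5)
Your proposal is correct and is essentially the proof the paper relies on: the paper offers no argument of its own for Theorem~\ref{Rtrivialwalk}, deferring entirely to \cite{steinberg.2006,steinberg.2008}, and your derivation---realizing the transition matrix as $\rho(\P)$, triangularizing $\mathbb{C}\Omega$ along a composition series using the fact that all simple modules of an $\RR$-trivial monoid are one-dimensional with characters $\chi_{[e]}(m)=1$ iff $[m^{\omega}]\geq[e]$, and then extracting the multiplicities from $\mathrm{rank}\,\rho(e)=\mathrm{tr}\,\rho(e)=|e\Omega|$---is exactly the standard argument in those references. One parenthetical slip: $\LL$-equivalent idempotents need \emph{not} satisfy $e\Omega\subseteq f\Omega$ as sets (a two-element left-zero band with adjoined identity acting on itself gives $e\Omega=\{e\}\neq\{f\}=f\Omega$); the correct justification is that $ef=e$ gives $e\Omega=e(f\Omega)$, so $|e\Omega|\leq|f\Omega|$, and symmetrically $fe=f$ gives the reverse inequality---though well-definedness also follows for free from your own trace identity, which shows $|e\Omega|$ depends only on $[e]$.
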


\subsection{Wreath products}
\label{subsection.gen wreath}
We refer to Eilenberg~\cite{EilenbergB} for the wreath product of left
transformation monoids (except he uses right transformation
monoids). Another reference is the book~\cite{Meldrum}. Let
$[n]=\{0,\ldots,n\}$.  If $M$ is a monoid acting faithfully on the
left of $[n]$ and $N$ is a monoid acting faithfully on the left of
$X$, then $(M,[n]) \wr (N,X)$ is the monoid $W$ acting faithfully on
$[n]\times X$ defined as follows.   An element $f\in W$ is of the
form
\begin{equation}\label{wreath.element}
	f=\wreath_f(f_0,\ldots, f_n)\;,
\end{equation}
where $\wreath_f\in M$ and $f_i\in N$, for $0\leq i\leq n$.  The product is given by
\[
	\wreath_f(f_0,\ldots, f_n)\cdot \wreath_g(g_0,\ldots, g_n)
	= \wreath_f\wreath_g(f_{\wreath_g(0)}g_0,\ldots,f_{\wreath_g(n)}g_n).
\]
The action of $f$ as in \eqref{wreath.element} on $[n]\times X$ is given by $f(k,x) = (\wreath_f(k),f_k(x))$. If $\wreath_f$ is the identity,
then we just write $f=(f_0,\ldots,f_n)$. The monoid $W$ will be denoted $M \wr N$
when the underlying sets $[n]$ and $X$ are clear.

An alternative representation of this wreath product is via column monomial matrices.
A matrix is column monomial if each column contains exactly one non-zero entry.
Let $\mathscr T_n$ denote the monoid of all self-maps of $[n]$.  Then the wreath product $\mathscr T_n\wr N$
can be identified with the monoid of all column monomial $n\times n$ matrices over $N\cup \{0\}$ with the usual
matrix multiplication. Notice addition is never needed when multiplying column monomial matrices.  The matrix corresponding
to an element $\wreath_f(f_0,\ldots, f_n)$ is the matrix where the unique non-zero entry of column $j$ is $f_j$ and
this element is placed in row $\wreath_f(j)$.  For example if $f=\wreath(f_0,f_1,f_2)$ where (in two-line notation for a function)
\[
	\wreath = \begin{pmatrix} 0 & 1 &2\\ 0 & 0 &2\end{pmatrix},
\]
then the corresponding column monomial matrix is
\[
	f= \begin{bmatrix} f_0 & f_1 & 0\\ 0 & 0& 0\\ 0 & 0& f_2\end{bmatrix}.
\]

If $M$ is a monoid acting on the left of a set $X$, then the associated linear representation
$\rho_X\colon M\to M_{|X|}(\mathbb C)$ is given by
\[
	\rho_X(m)_{ij} = \begin{cases} 1& \text{if}\ mj=i,\\ 0 & \text{else.}\end{cases}
\]
Crucial to this paper is the following
observation.  Consider the Markov chain $\mathcal M$ with state space $X$ where if we are in state $x$, then we
choose an element $m\in M$ with probability $p_m$ and we transition from $x$ to $mx$. Then the transition matrix of
$\mathcal M$ is given by
\[
	\sum_{m\in M}p_m\cdot \rho_X(m).
\]
This is why the representation theory of monoids is potentially useful to analyze Markov chains.

Let $\rho_X\colon N\to M_{|X|}(\mathbb C)$ be the linear representation
associated to the action of $N$ on $X$.  To describe the linear representation
$\rho \colon \mathscr T_n\wr N\to M_{n|X|}(\mathbb C)$ associated to the action of $\mathscr T_n\wr N$ on $[n]\times X$, we should think of $\mathbb C^{n|X|}$ as $\mathbb C^n\otimes \mathbb C^{|X|}$.  Then
$\rho(f)$ is given by the block column monomial matrix obtained by applying $\rho_X$ to each entry of
the column monomial matrix associated to $f$ (where $\rho_X(0)$ is understood to be the $|X|\times |X|$ zero matrix).

\subsection{A wreath product approach to sandpile models}
\label{subsection.wreath}
Let $\tree=(V,E,T)$ be the data for the arborescence as in Section~\ref{subsection.trees}.  Let $\Omega(\tree)$ be the
state space of the Markov chain associated to $\tree$ (see Eq.~\eqref{equation.state space}).  If $\ell\in L$ is a leaf, define
$\nabla_\ell\tree$ to consist of the arborescence obtained by removing the leaf $\ell$ from the vertex set $V$, the outgoing edge
from $\ell$ from the edge set $E$, and $T_\ell$ from the threshold vector $T$. In this section we shall allow an empty arborescence.
Note that $\Omega(\emptyset)$ is a one-element set.

If $w\in V$ is any vertex of our arborescence we can define an operator $\source_w$ on $\Omega(\tree)$ analogously to the
way the source operator in Section~\ref{subsection.entire} was defined for leaves. For the empty arborescence, we interpret
$\source_\emptyset$ by convention to be the identity on $\Omega(\emptyset)$. We can define a \defi{successor} operator on the
vertices of an arborescence by letting $\su(v)$ be the endpoint of the unique edge from $v$.  For convenience, set
$\su(\rootnode)=\emptyset$.

An important role is played in this paper by two families of monoids corresponding to the two variants of the sandpile model.
The monoids associated to $\tree$ are given by
\[N(\tree)=\langle \source_v,\theta_v\mid v\in V\rangle\quad \text{and}\quad
M(\tree)= \langle \source_v,\tau_v\mid v\in V\rangle.\]
Note that $N(\emptyset)=M(\emptyset) = \{\source_{\emptyset}\}$.  Also, since $\tau_v=\theta^{T_v}_v$, it follows that $M(\tree)$ is a submonoid of $N(\tree)$.

The monoids $N(\tree)$ and $M(\tree)$ can be described recursively as follows.  Fix a leaf $\ell\in L$ of $\tree$ and observe
that $\Omega(\tree) = [T_\ell]\times \Omega(\nabla_\ell\tree)$.  We then have the recursions in Table~\ref{tablerecursion},
where $t\in \Omega(\nabla_\ell \tree)$, $t_\ell\in[T_\ell]$, and the operators on the right hand side are viewed as mappings
on $\Omega(\nabla_v\tree)$.

\begin{table}[tbhp]
\begin{align*}
\source_\ell(t_\ell,t) &= \begin{cases}(t_\ell+1,t) & \text{if}\ t_\ell<T_\ell\\ (T_\ell,\source_{\su(\ell)}t) & \text{if}\ t_\ell=T_\ell\end{cases}\\
\source_v(t_\ell,t)&=(t_\ell,\source_{v}t) & (v\neq \ell)\\
\theta_\ell(t_\ell,t)&=\begin{cases} (t_\ell-1,\source_{\su(\ell)}t) & \text{if}\ t_\ell>0\\ (0,t) & \text{if}\ t_\ell=0\end{cases}\\
\theta_v(t_\ell,t)&=(t_\ell,\theta_{v}t) & (v\neq \ell)\\
\tau_\ell(t_\ell,t)&= (0,\source_{\su(\ell)}^{t_\ell}t)\\
\tau_v(t_\ell,t)&=(t_\ell,\tau_{v}t) & (v\neq \ell).
\end{align*}
\caption{Recursions for sandpile operators\label{tablerecursion}}
\end{table}

To rephrase the recursions in Table~\ref{tablerecursion} in the language of wreath products, we need to introduce some notation.
For an $m\geq 0$, define mappings $\alpha_m$ and $\beta_m$ on $[m]$ as follows:
\begin{align*}
\alpha_m(h) &= \begin{cases}h+1 & \text{if}\ h<m,\\ m & \text{if}\ h=m,\end{cases}\\
\beta_m(h) &= \begin{cases}h-1 & \text{if}\ h>0,\\ 0 & \text{if}\ h=0.\end{cases}
\end{align*}
Denote by $\ov k$ the constant mapping on $[m]$ with image $k$.
Let $N(m) = \langle \alpha_m,\beta_m\rangle$ and $M(m) = \langle \alpha_m,\ov 0\rangle$. Note that $M(m)\subseteq N(m)$.

Clearly
\begin{align*}
N(\tree)&\subseteq (N(T_\ell),[T_\ell])\wr (N(\nabla_\ell \tree), \Omega(\nabla_\ell\tree))\;,\\
M(\tree)&\subseteq (M(T_\ell),[T_\ell])\wr (M(\nabla_\ell \tree), \Omega(\nabla_\ell\tree))\;.
\end{align*}

Indeed, we have
\begin{equation}\label{wreathcoord}
\begin{split}
\source_\ell &= \alpha_{T_\ell}(\id,\ldots,\id,\source_{\su(\ell)})\\
\source_v &= (\source_v,\ldots,\source_v)\qquad\qquad\qquad\qquad (v\neq \ell)\\
\theta_\ell &= \beta_{T_\ell}(\id,\source_{\su(\ell)},\source_{\su(\ell)},\ldots,\source_{\su(\ell)})\\
\theta_v &= (\theta_v,\ldots,\theta_v)\qquad\qquad\qquad\qquad (v\neq \ell)\\
\tau_\ell &= \ov 0(\id,\source_{\su(\ell)},\source^2_{\su(\ell)},\ldots,\source^{T_\ell}_{\su(\ell)})\\
\tau_v &= (\tau_v,\ldots,\tau_v)\qquad\qquad\qquad\qquad (v\neq \ell).
\end{split}
\end{equation}

The wreath product setting immediately yields a convenient description
of the action of the generators by multiplication on the right.
\begin{rem}
  \label{remark.right_regular_representation_wreath}
  Take $f:=\wreath_f(f_0,\ldots,f_{T_\ell})\in M(\tree)$ and $m\in
  M(\nabla_\ell\tree)$. Then,
  \begin{equation*}
  \begin{split}
    \wreath_f(f_0,\ldots,f_{T_\ell})\ \tau_\ell &= \ov{\wreath_f(0)} (f_0,f_0\source_\ell,\ldots,f_0\source_{\su(\ell)}^{T_\ell})\,,\\
    \wreath_f(f_0,\ldots,f_{T_\ell})\ \source_\ell &= (\wreath_f\circ \alpha_{T_\ell}) (f_1,\ldots,f_{T_\ell+1})\,,\\
    \wreath_f(f_0,\ldots,f_{T_\ell})\ m &= \wreath_f (f_0m,\ldots,f_{T_\ell}m)\,,
  \end{split}
  \end{equation*}
  where, for notational convenience,
  $f_i:=f_{T_\ell}\source_{\su(\ell)}^{i-T_\ell}$ for $i>T_\ell$.
\end{rem}

\section{Stationary distributions}
\label{section.stationary}

In this section we prove the stationary distributions stated in Section~\ref{section.models}.
In Section~\ref{subsection.master equation} we use a master equation approach to prove
Theorem~\ref{stationaryforfirstvarianttree}. Both Theorems~\ref{stationaryforfirstvarianttree}
and~\ref{thm.stationarymu} are proved in Section~\ref{subsection.stationary} using wreath products.

\subsection{Master equation proof}
\label{subsection.master equation}

We recall basic facts about the stationary distribution of a finite Markov chain.
The stationary probability of every configuration $t \in \Omega$ satisfies the
\defi{master equation}
\be \label{mastereq}
\sum_{t' \in \Omega} \text{probability}(t \to t') \;\P(t) =
\sum_{t'' \in \Omega} \text{probability}(t'' \to t) \;\P(t''),
\ee
namely that the total weight of the outgoing transitions of any
configuration is equal to the incoming weight.
In cases where the chain is ergodic, the solution to \eqref{mastereq} is
unique up to an overall scaling factor. This factor is determined by the
fact that the sum of all probabilities is one.
Let us denote by
\begin{align*}
\Out(t) &= \{t'\;| \; \text{probability}(t \to t') \neq 0\} \\
\In(t) &= \{t''\;| \; \text{probability}(t'' \to t) \neq 0\}
\end{align*}
the sets of outgoing and incoming  configurations into $t$.
For reversible Markov chains, $\Out(t) = \In(t)$ and this equation
is satisfied term by term simply by setting $t''=t'$. This is essentially
the definition of a reversible chain.

For nonreversible Markov chains, this is not true. In special cases, the
\defi{pairwise balance} condition~\cite{srb1996} is satisfied which says
that there is an invertible map $\phi\colon \Out(t) \to \In(t)$ so that for every
$t' \in \Out(t)$, $\phi(t) = t''$ satisfies
\be \label{pairbal}
\text{probability}(t \to t') \;\P(t)=\text{probability}(t'' \to t) \;\P(t'').
\ee
Obviously, a necessary condition for this to work is that $|\Out(t)| = |\In(t)|$ for
all $t \in \Omega$.

We will need a variant of the pairwise balance condition, which we describe now.
Suppose $P_{O}(t)$ (respectively $P_{I}(t)$) is a set partition of $\Out(t)$ (respectively $\In(t)$)
of the same cardinality and further that there exists an invertible map
$\Phi \colon P_{O}(t) \to P_{I}(t)$ which satisfies for every $\wp \in P_{O}$,
\be \label{genpairbal}
\sum_{t' \in \wp}\text{probability}(t \to t') \;\P(t)=
\sum_{t'' \in \Phi(\wp)}\text{probability}(t'' \to t) \;\P(t'').
\ee
If this happens for all $t$, then the master equation~\eqref{mastereq}
holds
and we say that \defi{partitioned balance} holds.

\begin{proof}[Proof of Theorem~\ref{stationaryforfirstvarianttree}]
By Proposition~\ref{prop:strong1}, this chain is ergodic, and hence has a unique stationary distribution.
Therefore, we simply need to check that $\P(t)$ given by formula~\eqref{probtree1} satisfies the master
equation \eqref{mastereq} for a generic configuration $t\in \Omega$. Set $|V|=n$.

We will do so by showing that partitioned balance holds
with $n+1$ partitions of $\Out(t)$. $n$ of these partitions correspond
to singletons $\{\theta_{v}(t)\}$ for all $v \in V$. The last partition
is given by the $\{\source_\ell(t)\mid \ell \in L \}$. Clearly, these form a
set partition of $\Out(t)$.

We will first describe $\Phi(\{\theta_{v}(t)\})$ and show that \eqref{genpairbal} is satisfied.
If $t_v=0$, then $\theta_{v}(t)=t$ and we set $\Phi(\{\theta_{v}(t)\})=\{t\}$.
In this case~\eqref{genpairbal} reduces to~\eqref{pairbal}, which is easy to check.

When $t_{v} \neq 0$, we first describe the set $\Phi(\{\theta_{v}(t)\})$ in words. It is the set of all
possible configurations $t''$ which make a transition to $t$ in such a way that the last grain
falls at site $v$. Note that this can happen either by toppling another vertex
or by entering at a leaf. More precisely, a branch is a contiguous set of all vertices in
$\ell^\downarrow \setminus v^\downarrow$ for a given leaf $\ell\in L$, ending in $v$ but not including $v$,
which are filled to the threshold in configuration $t$. Define $B_{t}(v)$ to be the
set of all \defi{branches} for any $\ell\in L$. Then $B_{t}(v) \cap L$ is the set of leaves of
the \defi{filled branches}, which are the branches where all the vertices from the leaf to $v$
are filled to the threshold. Similarly, let
\[
	W_{t}(v) = \{w \in \bigcup_{\ell\in L_v} (\ell^\downarrow \setminus v^\downarrow) \setminus B_{t}(v)
	\mid \text{$w$ adjacent to $B_t(v)$}\}.
\]
In other words, the set of vertices in $W_t(v)$ are those that sit just above an \defi{unfilled branch}.

If $w \in W_{t}(v)$, define $t^{(v,w)}$ as follows.
Let $t^{(v,w)}_{v}= t_{v}-1, t^{(v,w)}_{w}= t_{w}+1$
and  $t^{(v,w)}_{u}= t_{u}$  for all other vertices $u$.
For $\ell\in B_{t}(v) \cap L$, let $t^{(v,\ell)}$ be the configuration such that $t^{(v,\ell)}_{v}= t_{v}-1$ and
$t^{(v,\ell)}_{u}= t_{u}$ for all other vertices $u$.
Then
\[
\Phi(\{\theta_{v}(t)\}) =
\{t^{(v,w)} \mid w \in W_{t}(v) \}
  \cup
\{t^{(v,\ell)} \mid \ell \in B_{t}(v) \cap L \}\;.
\]
To verify \eqref{genpairbal} for this partition, we have to show that
\[
x_{v} \P(t) = \sum_{w \in W_{t}(v)} x_{w} \P(t^{(v,w)})
+ \sum_{\ell \in B_{t}(v) \cap L } y_{\ell} \P(t^{(v,\ell)})\;.
\]
But, dividing by $\frac{x_v}{Y_v}\P(t)$ and using \eqref{probtree1} for the
probabilities, this amounts to showing
\[
Y_{v}  = \sum_{w \in W_{t}(v)} Y_{w}
+ \sum_{\ell \in B_{t}(v) \cap L } y_{\ell},
\]
and this is easy to see from the definition of $Y_v=\sum_{\ell\in L_v}y_\ell$.

So far, we have considered all possible interior topplings in $\Out(t)$ and all possible
transitions which end with a sand grain being deposited in the interior.
We now consider the last partition of $\Out(t)$ given by the action of
the boundary operators $\{\source_\ell(t)\mid \ell \in L \}$. We will show that
the corresponding configurations in $\In(t)$ are those which end with a sand grain
leaving from the root.

As before, we consider all branches of the root
$\widetilde B_{t}(\rootnode)$, this time including the root. Note that if the root is not filled
to the threshold, only $\rootnode$ belongs to this set. We use the same terminology
as in the first half of the proof and let
$\widetilde W_{t}(\rootnode)  \subseteq \bigcup_{\ell\in L} \ell^\downarrow  \setminus
\widetilde B_{t}(\rootnode)$ denote the vertices which have an arrow to a vertex in $\widetilde
B_{t}(\rootnode)$. As before, for $w \in \widetilde W_{t}(\rootnode)$ we define
$t^{(\rootnode,w)}$ to be the configurations with $t^{(\rootnode,w)}_{\rootnode}= t_{\rootnode}-1$, and
$t^{(\rootnode,w)}_{u}= t_{u}$ for all other vertices $u$.
For $\ell \in \widetilde B_{t}(\rootnode) \cap L$, we define $t^{(\rootnode,\ell)}=t$.
The case $t^{(\rootnode,\ell)}$ corresponds to the situation where a path from leaf $\ell$ to
$\rootnode$ is completely filled to the threshold in $t$.
Thus, we have to show
\[
\sum_{\ell \in L} y_\ell \P(t) = \sum_{w \in \widetilde W_{t}(\rootnode)}
x_{w} \P(t^{(\rootnode,w)})
+ \sum_{\ell \in \widetilde B_{t}(\rootnode) \cap L } y_\ell \P(t^{(\rootnode,\ell)}).
\]
Dividing by $\P(t)$ on both sides and using \eqref{probtree1}, we obtain
\[
\sum_{\ell \in L} y_\ell = \sum_{w \in \widetilde W_{t}(\rootnode)} Y_{w}
+ \sum_{\ell \in \widetilde B_{t}(\rootnode) \cap L }  y_\ell,
\]
and this is also true exactly as before.
Therefore we have confirmed that pairwise balance holds for this model
using the probabilities given by \eqref{probtree1}.   This
completes the proof.
\end{proof}

An alternative proof of Theorem~\ref{stationaryforfirstvarianttree} with an algebraic flavor is presented
in Section~\ref{subsection.stationary}.

\subsection{Stationary distributions via wreath products}
\label{subsection.stationary}
We now use the wreath product representation of Section~\ref{subsection.wreath} to give an alternative proof of
Theorem~\ref{stationaryforfirstvarianttree} and a proof of Theorem~\ref{thm.stationarymu}. We use the notation of these
theorems and of Section~\ref{subsection.gen wreath}.

What we shall actually do is prove a more general result.  Suppose that we have operators $\psi_1,\ldots, \psi_r$ acting on a set
$\Omega'$ and a threshold $T$. Let $\Omega=[T]\times \Omega'$.  Set $N=\langle\psi_1,\ldots,\psi_r\rangle$. We define
elements in $(N(T),[T])\wr (N,\Omega')$ as follows:
\begin{align*}
\source_0 &= \alpha_T(\id,\ldots,\id,\psi_1)&\\
\theta_0 &= \beta_T(\id,\psi_1,\ldots,\psi_1)&\\
\Psi_i &= \,\,\,\,\,\,\,(\psi_i,\ldots,\psi_i) & (1\leq i\leq r).
\end{align*}

For example, the operators defining $N(\tree)$ for the \trickle{} are of this form, as are the operators
defining $M(\tree)$ for the \landslide{} if $T_\ell=1$, where $\ell$ is the distinguished leaf considered above.

In column monomial form the above operators are given by
\begin{equation}\label{monomialoperators}
\begin{split}
\source_0 &= \begin{bmatrix}  0&0&\cdots &0 &0\\ I & 0 &\ddots &0 &0\\ 0 & I &\ddots &0&\vdots\\ \vdots &
0& \ddots&0 &0
\\ 0 &0&\cdots &I &\psi_1\end{bmatrix},\qquad
 \theta_0 = \begin{bmatrix}  I&\psi_1& 0&\cdots &\vdots\\ 0 & 0  &\psi_1 &0 &0\\ \vdots & 0 &\ddots
 &\ddots &0\\ 0 & 0& \ddots&0 &\psi_1
\\ 0 &0&\cdots &0 &0\end{bmatrix},\\
\Psi_i&= \begin{bmatrix} \psi_i & 0 & \cdots &0\\ 0 &\psi_i& \ddots&\vdots \\ \vdots & \ddots
&\ddots &0\\ 0&\cdots & 0 &\psi_i\end{bmatrix}\qquad\qquad (1\leq i\leq r).
\end{split}
\end{equation}
where we are thinking of $\mathbb C^{|\Omega|}$ as $\mathbb C^T\otimes \mathbb C^{|\Omega'|}$.

Consider the Markov chain $\mathcal M$ with state set $\Omega$, where $\source_0$ is applied
with probability $y_0$, $\theta_0$ is applied with probability $x_0$, and $\Psi_i$ is applied with probability $z_i$.
Let us also consider the derived Markov chain $\mathcal M'$ with state set $\Omega'$, where the $\psi_i$ are applied
 with probabilities
\begin{equation} \label{equation.z}
	z'_i = \begin{cases} \dfrac{z_1+y_0}{1-x_0} & \text{if}\ i=1,\\
                    \dfrac{z_i}{1-x_0} & \text{if}\ 2\le i\leq r.
       \end{cases}
\end{equation}

\begin{prop}\label{stationaryrecursion}
Suppose that $\P'$ is a stationary distribution for $\mathcal M'$ and define
\[\pi(h) = \dfrac{y_0^hx_0^{T-h}}{\sum_{i=0}^Tx_0^iy_0^{T-i}}.\]  Then the product measure
$\P = \pi\times \P'$ on $\Omega$ is a stationary distribution for $\mathcal M$, that is,
$\P(h,t) = \pi(h)\P'(t)$ for all $(h,t)\in \Omega$ is stationary for $\mathcal M$.
\end{prop}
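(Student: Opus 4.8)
The plan is to verify directly that the product measure $\P=\pi\times\P'$, given by $\P(h,t)=\pi(h)\P'(t)$, satisfies the master equation~\eqref{mastereq}; since $\mathcal M$ is ergodic this suffices. First I would unwind the wreath product action encoded in~\eqref{monomialoperators} to record the explicit transitions on $\Omega=[T]\times\Omega'$: applying $\source_0$ (probability $y_0$) sends $(h,t)\mapsto(h+1,t)$ when $h<T$ and $(T,t)\mapsto(T,\psi_1 t)$; applying $\theta_0$ (probability $x_0$) sends $(h,t)\mapsto(h-1,\psi_1 t)$ when $h>0$ and fixes $(0,t)$; and applying $\Psi_i$ (probability $z_i$) sends $(h,t)\mapsto(h,\psi_i t)$. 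The single elementary fact I would isolate at the outset is the detailed-balance relation $y_0\,\pi(h)=x_0\,\pi(h+1)$ for $0\le h\le T-1$, which follows at once from the closed form of $\pi$.

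Next I would write the incoming weight into an arbitrary target $(h,t)$, splitting into the three cases $h=0$, $1\le h\le T-1$, and $h=T$, because the boundary behaviour of $\source_0$ and $\theta_0$ differs there. In each case I substitute $\P=\pi\times\P'$, then use detailed balance to rewrite $y_0\,\pi(h-1)$ as $x_0\,\pi(h)$ and $x_0\,\pi(h+1)$ as $y_0\,\pi(h)$, so that a common factor $\pi(h)$ can be pulled out. The point I expect to fall out is that all three cases collapse to one and the same identity, namely
\[
\P'(t) = x_0\,\P'(t) + y_0\sum_{t':\,\psi_1 t'=t}\P'(t') + \sum_{i=1}^r z_i\sum_{t':\,\psi_i t'=t}\P'(t').
\]

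Finally I would show that this identity is exactly the stationarity of $\P'$ for $\mathcal M'$. Moving $x_0\,\P'(t)$ to the left, dividing by $1-x_0$, and merging the $\psi_1$ contribution coming from $y_0$ with the $i=1$ term reproduces $\P'(t)=\sum_{i=1}^r z'_i\sum_{\psi_i t'=t}\P'(t')$ with the rates $z'_i$ of~\eqref{equation.z}. Since $y_0+x_0+\sum_i z_i=1$ gives $\sum_i z'_i=1$, the $z'_i$ form a genuine distribution and the displayed identity is precisely the master equation for $\mathcal M'$, which holds by hypothesis on $\P'$. This closes every case.

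The main obstacle will be the careful bookkeeping of the boundary transitions: the $\theta_0$ self-loop at $h=0$ and the ``stuck at the top'' move of $\source_0$ at $h=T$ (which applies $\psi_1$ to the second coordinate). These are exactly the terms that force the three cases to reduce to a single identity, so accounting for their contributions correctly is the crux; once the transition list and the detailed-balance relation are in hand, the remaining manipulations are routine.
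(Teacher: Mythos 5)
Your proposal is correct and takes essentially the same route as the paper: the paper verifies $M\P=\P$ by writing $M$ in block tridiagonal form and computing $[M\P]_h$ in the same three cases ($h=0$, $0<h<T$, $h=T$), using the same relation $y_0\pi(h-1)=x_0\pi(h)$ together with $M'\P'=\P'$, and your per-state identity $\P'(t)=x_0\P'(t)+y_0\sum_{\psi_1 t'=t}\P'(t')+\sum_i z_i\sum_{\psi_i t'=t}\P'(t')$ is exactly the componentwise form of that block computation. One cosmetic remark: ergodicity of $\mathcal M$ is neither assumed in this general setting nor needed, since verifying the master equation (plus $\pi\times\P'$ being a probability measure) already establishes that $\P$ is \emph{a} stationary distribution, which is all the proposition asserts.
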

\begin{proof}
Straightforward computation shows that $\pi$ is a probability distribution on $[0,T]$ and that
\begin{equation}\label{pirec}
\pi(h) = \left(\frac{y_0}{x_0}\right)\pi(h-1)\qquad (1\leq h\leq T).
\end{equation}
Denote by $M$ the transition matrix of $\mathcal M$ and $M'$ the transition matrix of $\mathcal M'$.  Then $M=y_0\source_0+x_0\theta_0+\sum_{i=1}^r z_i\Psi_i$ and $M'=\sum_{i=1}^rz_i'\psi_i$.  It then follows
from~\eqref{monomialoperators} that $M$ has the block tridiagonal form
\[
	M =  \begin{bmatrix}x_0I+A & x_0\psi_1 & & & 0\\ y_0I& A& x_0\psi_1 & \\  & \ddots & \ddots & \ddots & \\
	& &y_0I& A& x_0\psi_1   \\ 0& & & y_0I & y_0\psi_1+A\end{bmatrix}\;,
\]
where \[A=\sum_{i=1}^rz_i\psi_i=(1-x_0)M'-y_0\psi_1\]
by~\eqref{equation.z}.
In block vector form we have $\P = \left[\pi(0)\P',\ldots,\pi(T)\P'\right]^T$.
We then compute by direct matrix multiplication the block form of $M\P$ using that $M'\P'=\P'$ and repeated
application of~\eqref{pirec}:
\begin{align*}
[M\P]_0 &= \pi(0)\left[x_0+(1-x_0)M'-y_0\psi_1\right]\P'+\pi(1)x_0\psi_1\P'\\
&= \pi(0)\P'+\left[\pi(1)x_0-\pi(0)y_0\right]\psi_1\P'\\ &= \pi(0)\P';\\
[M\P]_h &= \pi(h-1)y_0\P'+\pi(h)\left[(1-x_0)M'-y_0\psi_1\right]\P'+\pi(h+1)x_0\psi_1\P'\\
&= \left[\pi(h-1)y_0-\pi(h)x_0\right]\P'+\pi(h)\P'+\left[\pi(h+1)x_0-\pi(h)y_0\right]\psi\P'\\&=\pi(h)\P' \qquad\qquad (\text{for}\ 0<h<T);\\
[M\P]_T &= \pi(T-1)y_0\P'+\pi(T)(1-x_0)M'\P'\\ &= \pi(T)\P'+\left[\pi(T-1)y_0-\pi(T)x_0\right]\P'\\ &= \pi(T)\P'.
\end{align*}
Therefore, $M\P = \P$ and so $\P$ is stationary.
\end{proof}

We can now deduce Theorem~\ref{stationaryforfirstvarianttree}.

\begin{proof}[Proof of Theorem~\ref{stationaryforfirstvarianttree}]
We retain the notation of Section~\ref{subsection.wreath}.  We work by induction on the number of vertices, the theorem being
trivial for no vertices.  Let $\mathcal M$ be the Markov chain with state space $\Omega(\tree)$ where $\source_v$ has
probability $y_v$ for all $v\in V$ and $\theta_v$ has probability $x_v$ for all $v\in V$.  Let \[Y_v = \sum_{w\geq v} y_w.\] Define
$\rho_v$, for $v\in V$, and $\P$ as in \eqref{defrho} and \eqref{probtree1}, respectively (but with the above definition of $Y_v$).
We prove $\P$ is the stationary distribution for $\mathcal M$. Theorem~\ref{stationaryforfirstvarianttree} will then follow by
setting $y_v=0$ if $v$ is not a leaf.

Let $\mathcal M'$ be the Markov chain with state space $\Omega(\nabla_v\tree)$ with probabilities
\[
	\widetilde y_v = \begin{cases}\dfrac{y_{\su(\ell)}+y_{\ell}}{1-x_\ell} & \text{if}\ v=\su(\ell),\\
	\dfrac{y_v}{1-x_\ell} & \text{if}\ v\neq \su(\ell),
	\end{cases}
\]
for the $\source_v$ and probabilities $\widetilde x_v=\frac{x_v}{1-x_{\ell}}$ for the $\theta_v$ with $v\in V\setminus \{\ell\}$.
For $v\in V\setminus \{\ell\}$, let
\[
	\widetilde Y_v =\sum_{w\geq v}\widetilde y_w.
\]
Using that $v<\ell$ if and only if $v\leq \su(\ell)$, we conclude that
\begin{equation}\label{movedtosucc}
\widetilde Y_v = \frac{Y_v}{1-x_\ell}.
\end{equation}
Denote by $\P'$ the stationary distribution of $\mathcal M'$.  Notice that if
\[
	\rho'_v(h) =  \frac{\widetilde Y_v^h \;\widetilde x_v^{T_v-h}}{\sum_{i=0}^{T_v} \widetilde Y_v^i\;
	\widetilde x_v^{T_v-i}},
\]
then $\rho'_v(h)=\rho_v(h)$ by \eqref{movedtosucc}.  By induction, we may assume
that the stationary distribution $\P'$ of $\mathcal M'$ is given by
\[
	\P'(t)=\prod_{v\in V\setminus \{\ell\}}\rho_v'(v) = \prod_{v\in V\setminus \{\ell\}}\rho_v(v).
\]
Proposition~\ref{stationaryrecursion} now implies that
\[
	\P(t)=\prod_{v\in V}\rho_v(v),
\]
as required.
\end{proof}

Next we prove Theorem~\ref{thm.stationarymu}.
\begin{proof}[Proof of Theorem~\ref{thm.stationarymu}]
Notice that if the threshold $T_v$ is $1$, then $\theta_v=\tau_v$.  Therefore, the inductive step of the proof of
Theorem~\ref{thm.stationarymu} proceeds identically to that of the proof of Theorem~\ref{stationaryforfirstvarianttree}.
The difference is only in the base case, which will be when the graph just contains the root.  In that case we just have
state space $[T_{\rootnode}]$ and with probability $y_{\rootnode}$ we increase the number of grains by $1$ to a
threshold of $T_{\rootnode}$ and with probability $x_{\rootnode}$ we go to $0$.  This is precisely the classical winning
streak Markov chain~\cite[Example 4.15]{MarkovMixing} and~\eqref{definemu} is the well-known stationary distribution for that chain.
\end{proof}

\section{$\RR$-triviality, eigenvalues, and rate of convergence for the \landslide}
\label{section.Rtrivial}

In this section we first show that the monoid associated to the \landslide{} is $\RR$-trivial. This is then used
to prove the eigenvalues of the transition matrix of this model and the rate of convergence.

\subsection{$\RR$-triviality of $M(\tree)$}
\label{subsection.Rtrivial}
Let us again fix $\tree$ an arborescence with a threshold vector. We retain the notation of Section~\ref{subsection.wreath}.
In particular, $\ell$ will denote a fixed leaf throughout this subsection.

Define a partial order, called the \defi{dominance order}, on
$\Omega(\tree)$ as follows: $t$ is \defi{dominated by} $t'$, written
$t\dom t'$, if for each vertex $v$ one has \[\sum_{w\geq v}t_w\leq \sum_{w\geq v}t'_w.\]

A mapping $f$ on $\Omega(\tree)$ is \defi{order-preserving} if $f(t)
\dom f(t')$ whenever $t\dom t'$. It is called \defi{decreasing} if $f(t)
\dom t$. The set of all order-preserving and decreasing mappings on a
poset is well-known to be a $\JJ$-trivial monoid. See e.g.~\cite{Pin}
or~\cite{denton_hivert_schilling_thiery.2010} for details. 
\begin{lem}\label{Jtrivial}
 The following hold.
\begin{enumerate}
\item  The mappings $\{\tau_v\mid v\in V\}$ preserve the dominance order
  and are decreasing.  Thus the monoid $J(\tree)=\langle \tau_v\mid
  v\in V\rangle$ is $\mathscr J$-trivial.

\item  The mappings $\{\source_v\mid v\in V\}$ preserve the dominance
  order, commute and are increasing. Thus the commutative monoid
  $\langle \source_v\mid v\in V\rangle$ is $\mathscr J$-trivial.

\item All mappings in the monoid $M(\tree)$ preserve dominance order.
\end{enumerate}
\end{lem}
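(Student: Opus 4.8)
The plan is to reduce all three parts to a single property of the generators—that each $\source_v$ and each $\tau_v$ preserves the dominance order $\dom$—together with the easy facts that $\tau_v$ is decreasing, that $\source_v$ is increasing, and that the $\source_v$ pairwise commute. Granting these, part~(1) follows because $J(\tree)$ is then a submonoid of the monoid of order-preserving decreasing self-maps of $(\Omega(\tree),\dom)$, which is $\JJ$-trivial by the cited fact, and $\JJ$-triviality passes to submonoids. Part~(2) follows the same way using the dual poset: applying the cited fact to $(\Omega(\tree),\dom^{\mathrm{op}})$ shows that the order-preserving increasing maps form a $\JJ$-trivial monoid, so $\langle \source_v\rangle$ is $\JJ$-trivial; the commutativity is recorded separately. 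Part~(3) is then immediate, since order-preserving maps are closed under composition and $M(\tree)=\langle \source_v,\tau_v\rangle$.

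The monotonicity statements are quick. Writing $S_u(t)=\sum_{w\ge u}t_w$ for the number of grains weakly above $u$, one has $t\dom t'$ if and only if $S_u(t)\le S_u(t')$ for every $u\in V$, by definition. A source move either adds one grain or does nothing, so it weakly increases every $S_u$; hence $\source_v$ is increasing. A toppling $\tau_v$ only relocates grains strictly toward the root (or off the root), and moving a grain from $w$ to some $w'\le w$ changes $S_u$ by $[w'\ge u]-[w\ge u]\le 0$; hence $\tau_v$ is decreasing. Commutativity of the $\source_v$ is a direct check from the recursion in Table~\ref{tablerecursion}.

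The substance of the lemma is order-preservation, and the main obstacle is that the site at which a grain settles depends on the current configuration, so one cannot compare $f(t)$ and $f(t')$ coordinatewise in a naive way. I would localize the problem to the covers of $\dom$. Via the linear bijection $t\mapsto (S_u(t))_{u\in V}$, the poset $(\Omega(\tree),\dom)$ embeds into $\Z^V$ with the coordinatewise order as the set of vectors $(s_u)$ satisfying $0\le s_u-\sum_{\su(c)=u}s_c\le T_u$; one checks this image is a sublattice and that its cover relations are exactly the unit moves---either moving one grain from a vertex to one of its children, or adding one grain at $\rootnode$ (these are the only moves changing a single $S_u$ by $1$). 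The delicate point is that whenever $t\dom t'$ with $t\ne t'$ there is such a unit move out of $t$ still dominated by $t'$; I would prove this by choosing a $\dom$-maximal vertex where the fluxes differ and, if its parent is empty, walking toward the root until a legal move is found, terminating at the root, which always admits the add-grain move. It then suffices to verify $f(t)\dom f(t')$ when $t'$ is obtained from $t$ by a single unit move.

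For that final verification I would record the flux-update formulas and check them against each type of unit move. Along the path $v^\downarrow=(v_0,\dots,v_a)$ one has $S_{v_j}(\source_v(t))=S_{v_j}(t)+[\text{some vertex among }v_0,\dots,v_j\text{ is below threshold in }t]$, with $S_u$ unchanged off the path, and $S_{v_j}(\tau_v(t))=S_{v_j}(t)-t_v+\min\bigl(t_v,\sum_{i=1}^{j}(T_{v_i}-t_{v_i})\bigr)$, again with $S_u$ unchanged off the path. A unit move toward $t'$ changes the bracketed indicator monotonically and changes $t_v$ and the path-capacity $\sum_{i=1}^{j}(T_{v_i}-t_{v_i})$ each by at most one; since $h\mapsto\min(t_v,h)$ is nondecreasing and $1$-Lipschitz and the indicator is monotone in the occupation numbers, each case yields $S_u(f(t))\le S_u(f(t'))$ for all $u$. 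Running through the finitely many cases---organized by whether the moved grain's endpoints lie on $v^\downarrow$---completes the proof that $\source_v$ and $\tau_v$ are order-preserving, and hence all three parts of Lemma~\ref{Jtrivial}.
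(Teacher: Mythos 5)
Your proposal is correct, but it follows a genuinely different route from the paper's. The paper never passes through cover relations: it proves order-preservation for the single-grain operator $\theta_v$ directly (so $\tau_v=\theta_v^{T_v}$ inherits everything), by taking an arbitrary pair $t\dom t'$ and comparing $\zeta(\theta_v(t))_w$ with $\zeta(\theta_v(t'))_w$ through a case analysis on $w$: the cases $w\nleq v$ and $w=v$ are quick, and for $w<v$ it splits according to whether the path segment between $w$ and $v$ is filled to threshold in $t'$ --- if yes, it decomposes $\zeta(\cdot)_w$ along the path plus the minimal vertices incomparable to $v$; if no, it uses that the toppled grain settles at some $v'$ with $w\le v'<v$, so $\zeta(\theta_v(t'))_w=\zeta(t')_w$. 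You instead prove a connectivity lemma (dominance is the transitive closure of single-grain unit moves staying inside $\Omega$; your maximal-differing-flux vertex argument and the walk to the root are sound --- maximality of the chosen vertex gives $t_w<T_w$, and an empty parent inherits a strict flux deficit, terminating with the add-grain move at $\rootnode$) and then verify closed-form flux updates, notably $S_{v_j}(\tau_v(t))=S_{v_j}(t)-t_v+\min\bigl(t_v,\sum_{i=1}^{j}(T_{v_i}-t_{v_i})\bigr)$, of which the paper has no analogue since it only ever moves one grain at a time. Your route costs an extra structural lemma but buys purely local verifications and handles $\tau_v$ in one shot; the paper's is shorter but its case analysis is global and more ad hoc. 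One caution in your final step: the indicator in the $\source_v$ update is antitone, not ``monotone,'' in the occupation numbers --- a unit move can switch it from $1$ to $0$, and this happens exactly when the added grain sits at $v_j$ itself and fills the last sub-threshold vertex among $v_0,\dots,v_j$ while the removed grain (if any) comes from $v_{j+1}$; in precisely that case $S_{v_j}$ simultaneously rises by one and compensates. Your announced case analysis, organized by the endpoints of the moved grain, does catch this, but the one-line heuristic as written does not, so spell out that compensation when you write the cases in full.
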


\begin{proof}
We prove part (1) as the other parts are similar to, or a direct consequence of, part (1).
Since by definition $\tau_v=\theta_v^{T_v}$, it suffices to prove that $\theta_v$ is decreasing and order
preserving. Because $\theta_v$ moves at most one grain lower down in the arborescence, it is clear
that it is a decreasing map. It remains to prove that it is order preserving. First we introduce some notation.

Recall that the \defi{$\zeta$-transform} of $t\in \Omega$ (a vector indexed by $V$) is defined by
$\zeta(t)_v = \sum_{w\geq v}t_w$ for a vertex $v$ and note that $t\dom t'$ if and only if $\zeta(t)_v\leq \zeta(t')_v$ for all vertices $v$.
Suppose now that $t\dom t'$.  If $t'_v=0$, then $\theta_v(t)\dom t\dom t'=\theta_v(t')$ and we are done.  So assume that $t'_v>0$.

Since $\theta_v$ changes a state only in vertices belonging to $v^{\downarrow}$, we have that if $w\nleq v$, then
$\zeta(\theta_v(t))_w=\zeta(t)_w\leq \zeta(t')_w= \zeta(\theta_v(t'))_w$.  Thus it suffices to prove
$\zeta(\theta_v(t))_w\leq \zeta(\theta_v(t'))_w$ if $w\leq v$.

Suppose $w=v$. If $t_v>0$, then clearly $\zeta(\theta_v(t))_v = \zeta(t)_v-1\leq \zeta(t')_v-1=\zeta(\theta_v(t'))_v$. On the other hand, if
$t_v=0$, then $\theta_v(t)=t$ and so (recalling $t'_v>0$)
\[
	\zeta(\theta_v(t))_v =\zeta(t)_v= \sum_{u\gtrdot v} \zeta(t)_u\leq \sum_{u\gtrdot v} \zeta(t')_u\leq \zeta(t')_v-1=\zeta(\theta_v(t'))_v\;,
\]
where $u\gtrdot v$ means $u$ covers $v$ in the ordering on vertices. Thus $\zeta(\theta_v(t))_v\leq \zeta(\theta_v(t'))_v$ in either case.

Next suppose that $w<v$ and that $t'_u=T_u$ for all $w\leq u<v$. Let $Y$ consist of those vertices $y\geq w$ such that $y$ is incomparable with $v$ and let $X$ be the set of minimal elements of $Y$.  Note that each vertex of $Y$ is above a unique vertex of $X$ because we are in a tree.  Also if $x\in X$, then $x\ngeq v$ and so $\zeta(\theta_v(t))_x = \zeta(t)_x \leq \zeta(t')_x = \zeta(\theta_v(t'))_x$. Then
\begin{align*}
	\zeta(\theta_v(t))_w &= \zeta(\theta_v(t))_v + \sum_{w\leq u<v}\theta_v(t)_u +  \sum_{x\in X}\zeta(\theta_v(t))_x\\ &\leq \zeta(\theta_v(t'))_v + \sum_{w\leq u<v}T_u +  \sum_{x\in X}\zeta(\theta_v(t'))_x = \zeta(\theta_v(t'))_w\;.
\end{align*}

Finally, suppose that $w<v$ and that $t'_u<T_u$ for some $u$ with $w\leq u<v$.
Then since $\theta_v(t)\dom t$, we have
\[
	\zeta(\theta_v(t))_w\leq \zeta(t)_w\leq \zeta(t')_w=\zeta(\theta_v(t'))_w\;,
\]
where the last equality follows because the grain of sand that was toppled from $v$ winds up at some vertex $v'$ with $w\leq v'<v$.
This completes the proof that $\theta_v$ (and hence $\tau_v$) is order preserving.
\end{proof}

In order to simultaneously handle a mix between topple and source
operators, we will need to split, according to the circumstances, the
tree $\tree$ into a downset and an upset, and to control the action of
the operators on these two parts in a different fashion.

For an idempotent $e$, define
\begin{equation}\label{equation.L}
  L(e) := \bigcup_{\{v\in V \suchthat \source_v\in c(e)\}} v^{\downarrow}
\end{equation}
and $U(e):=L(e)^c$ (where $X^c$ is the complement of a set $X$). Note
that $L(e)$ is a downset and $U(e)$ is an upset.

On an upset the control comes from order preserving properties.
Namely, for an upset $U\subseteq V$, define the dominance preorder
$\dom_U$ on $\Omega(\tree)$ analogously to the usual dominance order except
we only take into account vertices in $U$; that is, $t\dom_U t'$ if, for all $v\in U$, we have \[\sum_{w\geq v}t_w\leq \sum_{w\geq v}t'_w.\] We write $t\equiv_U t'$ if $t\dom_U t'$ and $t'\dom_U t$, that
is, if $t$ and $t'$ coincide on $U$.
As with the full dominance order, the monoid $M(\tree)$ interacts nicely with this preorder.
The fact that $\equiv_U$ is compatible with the action of $M(\tree)$ means
that $M(\tree)$ embeds in the generalized wreath product indexed by a poset
as discussed in~\cite{meldrum.1995}.

\begin{lem}\label{lemma.congruence}
  Let $U$ be an upset of $V$ in $\tree$.
\begin{enumerate}
\item  The mappings $\{\tau_v\mid v\in V\}$ preserve the dominance preorder
  $\dom_U$ and are decreasing.
\item   The mappings $\{\source_v\mid v\in V\}$ preserve the dominance
  preorder $\dom_U$ and are increasing.
\item  Furthermore, if $v\not\in U$, then $\source_v t\equiv_U t$ and
  $\tau_v t\equiv_U t$ for all $t\in \Omega(\tree)$.
  \end{enumerate}
\end{lem}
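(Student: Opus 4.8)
The plan is to bootstrap from Lemma~\ref{Jtrivial}, relativizing its proof to the preorder $\dom_U$ and exploiting that $U$ is an upset. The increasing and decreasing claims in parts (1) and (2) need no new work: since $t\dom t'$ implies $t\dom_U t'$, the full-order statements of Lemma~\ref{Jtrivial} immediately give $\tau_v(t)\dom_U t$ and $t\dom_U\source_v(t)$, so $\tau_v$ is $\dom_U$-decreasing and $\source_v$ is $\dom_U$-increasing. Using $\tau_v=\theta_v^{T_v}$ and the fact that a composite of $\dom_U$-preserving maps is again $\dom_U$-preserving, preservation of $\dom_U$ for the family $\{\tau_v\}$ reduces to preservation for each single $\theta_v$.

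The substance is thus to show $\zeta(\theta_v(t))_u\le\zeta(\theta_v(t'))_u$ for a fixed target $u\in U$, whenever $t\dom_U t'$, where $\zeta$ is the $\zeta$-transform of Lemma~\ref{Jtrivial}; quantifying over $u\in U$ then yields $\theta_v(t)\dom_U\theta_v(t')$. The hypothesis is genuinely weaker here, as we only know $\zeta(t)_w\le\zeta(t')_w$ for $w\in U$. The crucial structural point is that, because $U$ is an upset, the entire cone $\{w\mid w\ge u\}$ lies in $U$; hence every vertex entering the Lemma~\ref{Jtrivial} argument with target $u$ sits above $u$, and the inequalities previously read off from full dominance are now supplied verbatim by $t\dom_U t'$. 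Concretely I would reuse the same case split: if $u\not\le v$ then $\theta_v$ alters no vertex $\ge u$ and $\zeta(\theta_v(t))_u=\zeta(t)_u\le\zeta(t')_u=\zeta(\theta_v(t'))_u$; if $u\le v$ then $v\ge u\in U$ forces $v\in U$, and the $u=v$, filled-branch, and unfilled-branch cases go through exactly as before, since the covers of $v$, the side-branch minima, and the chain vertices between $u$ and $v$ all lie above $u$ and hence in $U$. Part~(2) for $\source_v$ is the identical relativization, tracking only whether the grain added by $\source_v$ lands inside the cone of $u$.

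Part~(3) is the cleanest and I would dispatch it first as a warm-up. If $v\notin U$, then no $u\in U$ satisfies $u\le v$, for otherwise $v\ge u\in U$ and the upset property would force $v\in U$. Now $\source_v$ and each application of $\theta_v$ (hence $\tau_v=\theta_v^{T_v}$) modify only vertices of $v^{\downarrow}$, that is, vertices $\le v$; such a modified vertex $w$ cannot satisfy $w\ge u$ for any $u\in U$, since $u\le w\le v$ would contradict $u\not\le v$. Thus no modification lies in the cone of any $u\in U$, giving $\zeta(\source_v t)_u=\zeta(t)_u$ and $\zeta(\tau_v t)_u=\zeta(t)_u$ for all $u\in U$, which is exactly $\source_v t\equiv_U t$ and $\tau_v t\equiv_U t$.

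The main obstacle is the preservation step: one must verify that weakening $\dom$ to $\dom_U$ breaks no case of the Lemma~\ref{Jtrivial} argument. The resolution is entirely the upset property, which guarantees that the decomposition of the cone $\{w\ge u\}$ into the part above $v$, the chain from $u$ up to $v$, and the hanging side branches involves only vertices of $U$; the threshold bounds and cone comparisons used in the filled-branch case then carry over unchanged, while the $t_v>0$ dichotomy and the unfilled-branch estimate are either statements about a single configuration or consequences of the $\dom_U$-decreasing property already established, and so are unaffected.
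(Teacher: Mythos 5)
Your proposal is correct and follows exactly the route the paper intends: the paper states Lemma~\ref{lemma.congruence} without proof, treating it as the relativization of Lemma~\ref{Jtrivial} to $\dom_U$, and your key observation --- that since $U$ is an upset, every vertex entering the $\zeta$-transform case analysis with target $u\in U$ (the covers of $v$, the chain from $u$ to $v$, the side-branch minima) lies in $U$, so the needed inequalities are supplied by $t\dom_U t'$ --- is precisely what makes that relativization go through, with part~(3) following from the fact that $\source_v$ and $\tau_v$ only modify vertices in $v^{\downarrow}$, none of which can lie in the cone of any $u\in U$ when $v\notin U$.
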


On a downset, control comes from the fact that functions involving
repeated source operators tend to be constant below these
sources. This is best expressed in the wreath product setting and we
start with some general remarks, whose proofs are straightforward.

\begin{prop}\label{prop.wreathprodidem}
Suppose that $f=\wreath_f(f_0,\ldots, f_{T_\ell})\in M(\tree)$.  Then $f$ is an idempotent
if and only if $\wreath_f=\id$ and $f_i^2=f_i$ for all $0\leq i\leq T_\ell$ or $\wreath_f=\ov k$
and $f_kf_i=f_i$, for $0\leq i\leq T_\ell$ (and so in particular $f_k^2=f_k$).
\end{prop}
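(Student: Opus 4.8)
The plan is to analyze the wreath product multiplication formula directly to determine when $f=\wreath_f(f_0,\ldots,f_{T_\ell})$ satisfies $f^2 = f$. First I would compute the product $f\cdot f$ using the multiplication rule stated in Section~\ref{subsection.gen wreath}:
\[
\wreath_f(f_0,\ldots,f_{T_\ell})\cdot \wreath_f(f_0,\ldots,f_{T_\ell})
= \wreath_f^2\bigl(f_{\wreath_f(0)}f_0,\ldots,f_{\wreath_f(T_\ell)}f_{T_\ell}\bigr).
\]
Thus $f$ is idempotent if and only if $\wreath_f^2=\wreath_f$ in $M(T_\ell)$ and $f_{\wreath_f(i)}f_i = f_i$ for all $0\leq i\leq T_\ell$. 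The task then reduces to understanding which elements $\wreath_f$ of the top monoid $M(T_\ell)=\langle \alpha_{T_\ell},\ov 0\rangle$ are idempotent, and translating the coordinate conditions $f_{\wreath_f(i)}f_i=f_i$ into the form stated.

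The key step is to classify the idempotents of $M(T_\ell)$. Since $\alpha_{T_\ell}$ is the ``shift up, saturate at $T_\ell$'' map and $\ov 0$ is the constant map to $0$, I would observe that the only idempotents of $M(T_\ell)$ are the identity $\id$ and the constant maps $\ov k$ (the latter arising because $\ov k$ is idempotent and products $\alpha_{T_\ell}^j \ov 0$ give constant maps). This dichotomy $\wreath_f\in\{\id\}\cup\{\ov k\}$ is exactly the case split in the statement. In the case $\wreath_f=\id$, the condition $f_{\wreath_f(i)}f_i=f_if_i=f_i$ says precisely that each $f_i$ is idempotent. In the case $\wreath_f=\ov k$, we have $\wreath_f(i)=k$ for all $i$, so the condition becomes $f_kf_i=f_i$ for all $i$; taking $i=k$ gives $f_k^2=f_k$, recovering the parenthetical remark that $f_k$ is automatically idempotent.

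The main obstacle is making the classification of idempotents in $M(T_\ell)$ fully rigorous, since one must confirm that no element with $\wreath_f$ neither $\id$ nor constant can be idempotent, and that an idempotent with $\wreath_f=\id$ genuinely forces the base coordinates to be idempotents. Both reduce to elementary facts: idempotency of $\wreath_f$ in the transformation monoid $M(T_\ell)$ forces $\wreath_f$ to fix its image pointwise, and the only order-preserving decreasing-or-constant maps generated by $\alpha_{T_\ell}$ and $\ov 0$ that are idempotent are $\id$ and the constants. Since the paper explicitly labels this proposition as having a \emph{straightforward} proof, I would keep this verification brief, simply citing the multiplication rule and the transparent structure of $M(T_\ell)$. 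The remaining verifications are direct substitution and require no further machinery.
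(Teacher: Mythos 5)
Your proposal is correct and follows essentially the same route as the paper's proof: both reduce idempotency of $f$ to idempotency of $\wreath_f$ via the wreath product multiplication rule, classify the idempotents of $M(T_\ell)=\langle\alpha_{T_\ell},\ov 0\rangle$ as $\id$ and the constants $\ov k$, and then read off the coordinate conditions $f_i^2=f_i$ (respectively $f_kf_i=f_i$) in the two cases. The paper is even terser on the classification step (citing only ``the definition of $M(T_\ell)$''), so your extra verification that no proper power of $\alpha_{T_\ell}$ is idempotent is harmless and, if anything, slightly more explicit.
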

\begin{proof}
If $f$ is an idempotent, then $\wreath_f$ must be an idempotent, and hence by definition of $M(T_\ell)$ either
$\wreath_f=\id$ or $\wreath_f=\ov k$ for some $0\leq k\leq T_\ell$.  In the first case, we have
\[
	f^2=(f_0,\ldots, f_{T_\ell})^2=(f_0^2,\ldots,f_{T_\ell}^2)
\]
and hence $f$ is idempotent if and only if $f_i^2=f_i$ for $0\leq i\leq T_\ell$.  In the second case, we have
\[
	f^2=\ov k(f_0,\ldots, f_{T_\ell})\ov k(f_0,\ldots, f_{T_\ell})= \ov k(f_kf_0,\ldots, f_kf_{T_\ell})
\]
and so $f$ is idempotent if and only if $f_kf_i=f_i$ for all $0\leq i\leq T_\ell$.
\end{proof}

We say that $m\in M(\tree)$ is \defi{constant with value} $k_v\in [T_v]$ at the vertex $v$ if $(mt)_v=k_v$ for all $t\in \Omega(\tree)$.

\begin{rem}\label{rightideal}
If $m\in M(\tree)$ is constant at the vertex $v$ with value $k_v$, then so is $mm'$ for all $m'\in M(\tree)$.
This follows, because putting $t'=m't$, we have $(mm't)_v=(mt')_v = k_v$.
\end{rem}

It will be convenient to describe the property of being constant at a vertex in terms of wreath product coordinates.
Recall that $\ell\in L$ is the chosen fixed leaf.

\begin{rem}\label{rem.constantwreathprod}
Let $m=\wreath_m(m_0,\ldots, m_{T_\ell})\in M(\tree)$.  Then $m$ is constant at $\ell$ with value $k_\ell$ if and only if
$\wreath_m=\ov k_\ell$. If $v\neq \ell$, then $m$ is constant with value $k_v$ at $v$ if and only if each $m_{t_{\ell}}$,
with $0\leq t_\ell\leq T_\ell$, is constant with value $k_v$ at $v$. This is immediate because
$m(t_\ell,t) = (\wreath_m(t_\ell),m_{t_\ell}t)$.
\end{rem}

For the next statement we need the notion of words representing monoid elements.
Let $f\in M(\tree)$. Then an expression $f=g_1\cdots g_r$ in terms of the generators
$g_i\in \{\source_v, \tau_v \suchthat v\in V\}$ is called a \defi{word} representing $f$.
If $w$ is a word in the generators of $M(\tree)$, the corresponding element of $M(\tree)$ will be written in
wreath product coordinates as $\wreath_w(w_0,\ldots, w_{T_\ell})$.

\begin{prop}
  \label{prop.wreath.partial}
  If $w$ is a word with at least $r$ occurrences of $\source_\ell$ with $\ell\in L$, then each $w_i$
  is represented by a word with at least $r+i-T_\ell$ occurrences of $\source_{\su(\ell)}$.
\end{prop}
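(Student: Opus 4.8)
The plan is to prove the sharper statement in which $r$ is the \emph{exact} number of occurrences of $\source_\ell$ in $w$: I claim that each coordinate $w_i$ is represented by a word over the generators $\{\source_v,\tau_v\mid v\in V\setminus\{\ell\}\}$ of $M(\nabla_\ell\tree)$ containing at least $r+i-T_\ell$ occurrences of $\source_{\su(\ell)}$. Since lowering $r$ only weakens the bound, this yields the proposition as stated. Observe that the bound is vacuous whenever $i<T_\ell-r$, so only the top coordinates are genuinely constrained. I would argue by induction on the length of $w$. The base case is the empty word, where every $w_i=\id$, we have $r=0$, and $i-T_\ell\leq 0$, so the empty word trivially witnesses the (nonpositive) bound.

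For the inductive step I would peel off the \emph{last} letter, writing $w=w'g$ with $w'=\wreath_{w'}(w'_0,\ldots,w'_{T_\ell})$, and read off the coordinates of $w$ from the right-multiplication formulas in Remark~\ref{remark.right_regular_representation_wreath}. There are four cases. If $g=\source_v$ or $g=\tau_v$ with $v\neq\ell$, then $w_i=w'_i\,g$, the number of $\source_\ell$ is unchanged, and right-multiplication by the generator $g$ of $M(\nabla_\ell\tree)$ does not decrease the count of $\source_{\su(\ell)}$; so the bound for $w'_i$ transfers directly. If $g=\tau_\ell$, then $w_i=w'_0\,\source_{\su(\ell)}^{\,i}$ and $r$ is unchanged, so appending $i$ copies of $\source_{\su(\ell)}$ to a word for $w'_0$ (which by induction carries at least $r-T_\ell$ of them) yields the required $r-T_\ell+i$. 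Finally, if $g=\source_\ell$, the shift by $\alpha_{T_\ell}$ gives $w_i=w'_{i+1}$ for $i<T_\ell$ and $w_{T_\ell}=w'_{T_\ell}\,\source_{\su(\ell)}$, while the count of $\source_\ell$ in $w'$ is one less than in $w$.

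The delicate bookkeeping is concentrated in the case $g=\source_\ell$, and this is the step I expect to be the crux. For $i<T_\ell$, the inductive bound for $w'_{i+1}$ is $(r-1)+(i+1)-T_\ell=r+i-T_\ell$: the single unit gained from the index shift $i\mapsto i+1$ exactly compensates for the additional $\source_\ell$ now counted in $w$; for $i=T_\ell$ the appended $\source_{\su(\ell)}$ supplies that unit instead. The point to emphasize is that reading the word from the right---via the right regular action, which is the natural one in an $\RR$-trivial monoid---makes the index shift $\alpha_{T_\ell}$ and the freshly produced $\source_{\su(\ell)}$ cooperate; by contrast, peeling off the first letter loses one occurrence in the $\source_\ell$ case and does not close the induction. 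Once all four cases are verified, the induction is complete.
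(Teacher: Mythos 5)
Your proof is correct and follows essentially the same route as the paper's: induction on the length of $w$, peeling off the last generator and tracking coordinates via the right-multiplication formulas of Remark~\ref{remark.right_regular_representation_wreath}, with the identical case analysis for $\tau_\ell$, $\source_\ell$, and the remaining generators (which the paper lumps into one case). Your strengthening to an exact count of $\source_\ell$ is an inessential variant, as the paper runs the same induction directly with the ``at least $r$'' hypothesis.
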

\begin{proof}
We proceed by induction on the length of the word $w$.  If $w$ is empty, there is nothing to prove.  Assume
that the proposition holds for a word $u$ and that $w=ug$ with $g$ a generator.  Suppose that $u$ has at
least $r$ occurrences of $\source_\ell$.  Then we have three cases.  If $g=\tau_\ell$, then by
Remark~\ref{remark.right_regular_representation_wreath} we have $w_i=u_0\source_{\su(\ell)}^i$ and so
$w_i$ is represented by a word with at least $r+i-T_\ell$ occurrences of $\source_{\su(\ell)}$ by induction.
If $g=\source_\ell$, then Remark~\ref{remark.right_regular_representation_wreath} shows that $w_i = u_{i+1}$,
where $u_{T_\ell+1}=u_{T_\ell}\source_{\su(\ell)}$.  Thus by induction $w_i$ has at least $r+i+1-T_\ell=r+1+i-T_\ell$
occurrences of $\source_{\su(\ell)}$.  Finally, if $g\neq \tau_\ell,\source_\ell$, then $w_i=u_ig$ and the result follows by induction.
\end{proof}

The following technical lemma is the key statement for proving that $M(\tree)$ is $\mathscr R$-trivial.

\begin{lem}\label{constantstuff}
Let $e\in E(M(\tree))$ and let $\source_v\in c(e)$.  Then $e$ is constant at each $w\in v^{\downarrow}$.
\end{lem}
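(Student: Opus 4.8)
The plan is to argue by induction on the number of vertices of $\tree$, using the wreath product decomposition with respect to a fixed leaf $\ell$, namely $M(\tree) \subseteq (M(T_\ell),[T_\ell]) \wr (M(\nabla_\ell\tree), \Omega(\nabla_\ell\tree))$, and writing $e = \wreath_e(e_0,\ldots,e_{T_\ell})$. Throughout, Proposition~\ref{prop.wreathprodidem} controls the shape of the idempotent: either $\wreath_e=\id$ and every $e_i$ is idempotent, or $\wreath_e=\ov k$ and $e_k$ is idempotent with $e_k e_i = e_i$ for all $i$. The two facts that let me pass between $e$ and its coordinates are Remark~\ref{rem.constantwreathprod} (for $w\neq\ell$, $e$ is constant at $w$ if and only if every $e_i$ is) and Remark~\ref{rightideal} (constancy at a vertex survives right multiplication). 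The base cases are straightforward, so I reduce the constancy of $e$ on $v^{\downarrow}$ to the constancy of the idempotent coordinate(s) on the corresponding downset in the smaller tree $\nabla_\ell\tree$, where the inductive hypothesis applies, and then split on whether $v=\ell$.

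If $v\neq \ell$, then $v$ and the whole path $v^{\downarrow}$ lie in $\nabla_\ell\tree$ (as $\ell$ is maximal), and $\source_v=(\source_v,\ldots,\source_v)$ is diagonal. Writing $e = a\,\source_v\,b$ with $\source_v\in c(e)$ and multiplying out in wreath coordinates yields $e_j = a_{\wreath_b(j)}\,\source_v\,b_j$ for every $j$, so each coordinate lies in $M(\nabla_\ell\tree)\,\source_v\,M(\nabla_\ell\tree)$. When $\wreath_e=\id$ I apply the inductive hypothesis to each (idempotent) $e_j$ to make it constant on $v^{\downarrow}$; when $\wreath_e=\ov k$ I apply it to the idempotent $e_k$ and then propagate to all $e_i$ via $e_i=e_k e_i$ and Remark~\ref{rightideal}. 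Remark~\ref{rem.constantwreathprod} then gives that $e$ is constant on $v^{\downarrow}$.

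If $v=\ell$, the source operator $\source_\ell=\alpha_{T_\ell}(\id,\ldots,\id,\source_{\su(\ell)})$ is not diagonal, and this is the crux. Projecting to the top coordinate is a monoid homomorphism onto $M(T_\ell)$ sending $\source_\ell$ to $\alpha_{T_\ell}$; since $\alpha_{T_\ell}$ is non-surjective it cannot lie in the content of the identity $\id$ of $M(T_\ell)$, so $\wreath_e\neq \id$, i.e.\ $\wreath_e=\ov k$, and already this shows $e$ is constant at $\ell$. It remains to treat $w\in \su(\ell)^{\downarrow} = \ell^\downarrow\setminus\{\ell\}$. Because $e$ is idempotent and $\source_\ell\in c(e)$, writing $e=(m\source_\ell m')^N$ exhibits $e$ by a word with at least $N$ occurrences of $\source_\ell$; taking $N=T_\ell+1$ and invoking Proposition~\ref{prop.wreath.partial} shows that $e_k$ is represented by a word containing at least one $\source_{\su(\ell)}$, whence $\source_{\su(\ell)}\in c(e_k)$. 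The inductive hypothesis applied to the idempotent $e_k$ in $M(\nabla_\ell\tree)$ makes $e_k$ constant on $\su(\ell)^{\downarrow}$; propagating along $e_i=e_k e_i$ and applying Remarks~\ref{rightideal} and~\ref{rem.constantwreathprod} shows $e$ is constant on $\su(\ell)^{\downarrow}$, which together with constancy at $\ell$ gives constancy on all of $\ell^{\downarrow}$.

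The main obstacle is exactly this case $v=\ell$: unlike the diagonal case, one cannot read a $\source_{\su(\ell)}$ off a coordinate from a single factorization $e=a\,\source_\ell\,b$, and it is essential to use idempotency to manufacture arbitrarily many copies of $\source_\ell$ and feed them through the coordinate word-count of Proposition~\ref{prop.wreath.partial}. The bookkeeping point to watch is the distinction between $e_i$ as a monoid element and the particular word representing it that the proposition produces.
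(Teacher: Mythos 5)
Your proposal is correct and takes essentially the same route as the paper's own proof: induction on the number of vertices in wreath product coordinates at a fixed leaf $\ell$, with Proposition~\ref{prop.wreathprodidem} controlling the idempotent shape, extraction of $\source_v\in c(e_k)$ from a factorization through the diagonal generator when $v\neq\ell$, and---the crux, exactly as in the paper---using idempotency to represent $e$ by a word with at least $T_\ell+1$ occurrences of $\source_\ell$ so that Proposition~\ref{prop.wreath.partial} yields $\source_{\su(\ell)}\in c(e_k)$ when $v=\ell$, then propagating constancy via $e_ke_i=e_i$ and Remarks~\ref{rightideal} and~\ref{rem.constantwreathprod}. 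The only cosmetic difference is in the case $\wreath_e=\id$, where the paper notes all coordinates equal a single idempotent $e'$ with $c(e')=c(e)$ while you derive $\source_v\in c(e_j)$ coordinatewise from the factorization; both variants are sound.
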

\begin{proof}
The proof proceeds by induction on the number of vertices in the arborescence.  The statement is vacuous when
there are no vertices. Let us write $e=\wreath_e(e_0,\ldots,e_{T_\ell})$.  We distinguish two cases: $v\neq \ell$ and $v=\ell$.

\smallskip

\noindent \textbf{Case 1}: $v\neq \ell$.
Let $w\leq v$. By Proposition~\ref{prop.wreathprodidem}, either $\wreath_e=\id$ or $\wreath_e=\ov k$ for
some $0\leq k\leq T_\ell$.  When $\wreath_e = \id$, $e_i^2=e_i$ for all $i$.  Also, neither $\source_\ell$, nor $\tau_\ell$
can belong to $c(e)$.  A glance at \eqref{wreathcoord} then shows that $e_0=\cdots=e_{T_\ell}=e'$ for some
$e'\in E(M(\nabla_\ell\tree))$ with $c(e')=c(e)$.  Thus, by induction, $e'$ is constant at $w$.  But then $e$ is
constant at $w$ by Remark~\ref{rem.constantwreathprod}.

Next suppose that $\wreath_e=\ov k$ with $0\leq k\leq T_{\ell}$. Then $e_k^2=e_k$ and $e_ke_i=e_i$ for
$0\leq i\leq T_\ell$.  According to Remark~\ref{rem.constantwreathprod}, in order for $e$ to be constant at
$v$ with value $k_v$, we need each $e_i$ to be constant at $v$ with value $k_v$. In light of Remark~\ref{rightideal}
and the equalities $e_ke_i=e_i$, it suffices to show that $e_k$ is constant at $v$ with value $k_v$.  By induction, it
therefore is enough to show that $\source_v\in c(e_k)$. This follows immediately from
Remark~\ref{remark.right_regular_representation_wreath} with $m=\sigma_v$.
\smallskip

\noindent \textbf{Case 2}: $v=\ell$.
Let $w\leq \ell$.  Note that $\wreath_e=\ov k$ for some $0\leq k\leq T_{\ell}$ because $\source_\ell\in c(e)$.
Hence $e_k^2=e_k$ and $e_ke_i=e_i$ for all $0\leq i\leq T_{\ell}$.

If $w=\ell$, then we are done by Remark~\ref{rem.constantwreathprod}, so assume $w<\ell$. Since $e$ is
idempotent and $\source_\ell\in c(e)$, we can represent $e$ by a word with at least $T_\ell+1$ occurrences
of $\source_\ell$. Proposition~\ref{prop.wreath.partial} then yields $\source_{\su(\ell)}\in c(e_k)$.  Then by
induction $e_k$ is constant at $w$. Arguing as in the previous case, we conclude that $e$ is constant at $w$.
\end{proof}

An immediate corollary is the following crucial fact.

\begin{cor}\label{constantconvexhull}
If $e\in E(M(\tree))$, then $e$ is constant at each $w\in L(e)$.
\end{cor}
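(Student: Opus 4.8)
The plan is to simply unwind the definition of $L(e)$ and reduce directly to Lemma~\ref{constantstuff}, which does all the real work. Recall that
\[
  L(e) = \bigcup_{\{v\in V \suchthat \source_v\in c(e)\}} v^{\downarrow},
\]
so membership $w\in L(e)$ is, by definition of a union, equivalent to the existence of some vertex $v\in V$ with $\source_v\in c(e)$ and $w\in v^{\downarrow}$.

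First I would fix an arbitrary $w\in L(e)$ and produce such a witness $v$: choose $v\in V$ with $\source_v\in c(e)$ and $w\le v$ (i.e.\ $w\in v^{\downarrow}$). Then I would invoke Lemma~\ref{constantstuff} with this particular $v$: since $e\in E(M(\tree))$ and $\source_v\in c(e)$, the lemma asserts that $e$ is constant at every vertex of $v^{\downarrow}$, and in particular at $w$. As $w\in L(e)$ was arbitrary, this shows $e$ is constant at each $w\in L(e)$, which is exactly the claim.

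There is essentially no obstacle here, since the corollary is a direct quantifier-level consequence of the lemma: Lemma~\ref{constantstuff} establishes the conclusion for a single downset $v^{\downarrow}$ attached to one source $\source_v\in c(e)$, and $L(e)$ is precisely the union of these downsets over all such sources. The only thing to be careful about is that the witness $v$ may depend on $w$, but this causes no trouble because ``$e$ is constant at $w$'' is a pointwise (per-vertex) property, so it suffices to verify it one vertex at a time using whatever source witnesses that vertex's membership in $L(e)$.
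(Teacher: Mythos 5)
Your proof is correct and is exactly the argument the paper has in mind: the paper states this as an immediate corollary of Lemma~\ref{constantstuff}, and your unwinding of the union defining $L(e)$ to produce a per-vertex witness $v$ with $\source_v\in c(e)$ is precisely that immediate deduction. Your remark that the witness may depend on $w$ but the property is pointwise is a sensible (if routine) clarification.
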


We are now in position to state and prove the main theorem of this
section.
\begin{thm}\label{isRtrivial}
The monoid $M(\tree)$ is $\RR$-trivial.
\end{thm}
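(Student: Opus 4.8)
The plan is to invoke the characterization recalled after Theorem~\ref{Rtrivialwalk}: a finite monoid is $\RR$-trivial if and only if, for every idempotent $e$ and every generator $x\in c(e)$, one has $ex=e$ (Theorem~5.1 of~\cite{BrzozowskiFich}). So I would fix $e\in E(M(\tree))$ and a generator $x\in c(e)$, and prove the equality of self-maps $ex=e$ on $\Omega(\tree)$ by verifying it separately on the downset $L(e)$ and on the upset $U(e)=L(e)^c$. Since $(ex)(t)=e(x(t))$, it suffices to show $e(x(t))=e(t)$ for all $t\in\Omega(\tree)$.

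On $L(e)$ there is nothing to prove: Corollary~\ref{constantconvexhull} states that $e$ is constant at every $w\in L(e)$, so $(e(x(t)))_w=(e(t))_w$ for all $t$ and all $w\in L(e)$, irrespective of $x$. Thus $ex$ and $e$ automatically agree on $L(e)$, and the entire content of the statement reduces to proving $e(x(t))\equiv_{U(e)} e(t)$, i.e.\ that $ex$ and $e$ agree on the upset $U(e)$.

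For the $U(e)$-part I would pass to the quotient by $\equiv_{U(e)}$. By Lemma~\ref{lemma.congruence}(1)--(2) every generator preserves $\dom_{U(e)}$, hence preserves $\equiv_{U(e)}$, so $M(\tree)$ acts on the quotient poset $\Omega(\tree)/\!\equiv_{U(e)}$. The key structural remark is that no source $\source_w$ with $w\in U(e)$ can occur in any word representing $e$: were it to occur, we would have $e\in M(\tree)\,\source_w\,M(\tree)$, so $\source_w\in c(e)$ and hence $w\in L(e)$, a contradiction. Consequently $e$ lies in the submonoid $W=\langle \tau_w\ (w\in V),\ \source_v\ (v\in L(e))\rangle$. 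Each generator of $W$ descends to an order-preserving and decreasing map of the quotient poset: the $\tau_w$ by Lemma~\ref{lemma.congruence}(1), and the $\source_v,\tau_v$ with $v\in L(e)=U(e)^c$ to the identity by Lemma~\ref{lemma.congruence}(3). Hence the image $\overline W$ sits inside the monoid of all order-preserving decreasing self-maps of this finite poset, which is $\JJ$-trivial (as recalled just before Lemma~\ref{Jtrivial}) and therefore $\RR$-trivial; and $\RR$-triviality passes to the submonoid $\overline W$.

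It then remains to transport the content hypothesis into $\overline W$ and finish. If $x=\source_v$ or $x=\tau_v$ with $v\in L(e)$, then $x$ acts as the identity on the quotient, so $\bar e\,\bar x=\bar e$ is immediate. The main obstacle is the remaining case $x=\tau_v$ with $v\in U(e)$, which is exactly where $\tau_v$ acts nontrivially on $U(e)$ and where the order structure must do the work. Here I would write $e=a\tau_v b$ with $a,b\in M(\tree)$; since the concatenated word $a\,\tau_v\,b$ represents $e$, the remark above forbids any letter $\source_w$ with $w\in U(e)$ in it, so in fact $a,b\in W$ and $\bar e=\bar a\,\overline{\tau_v}\,\bar b$, giving $\overline{\tau_v}\in c_{\overline W}(\bar e)$. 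Applying the $\RR$-trivial characterization \emph{inside} $\overline W$ yields $\bar e\,\overline{\tau_v}=\bar e$, which says precisely $e\tau_v\equiv_{U(e)} e$, i.e.\ agreement on $U(e)$. Combining this with the agreement on $L(e)$ gives $ex=e$ for every $x\in c(e)$, and hence $M(\tree)$ is $\RR$-trivial.
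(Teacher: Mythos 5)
Your proposal is correct and follows essentially the same route as the paper: the same criterion ($ex=e$ for every idempotent $e$ and $x\in c(e)$), the same split into $L(e)$ and $U(e)$ with Corollary~\ref{constantconvexhull} disposing of $L(e)$, and the same key factorization $e=a\tau_v b$ together with the observation that no $\source_w$ with $w\in U(e)$ can occur in any word representing $e$. The only difference is packaging: where the paper finishes Case~2 by the direct dominance sandwich $et=e h\tau_v h' t \dom_{U(e)} e\tau_v t \dom_{U(e)} et$, you pass to the quotient by $\equiv_{U(e)}$ and cite the $\JJ$-triviality (hence $\RR$-triviality) of order-preserving decreasing maps --- precisely the ``same trick'' the paper says it is inlining.
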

\begin{proof}
  As noted in Section~\ref{subsection.monoid}, it is sufficient to
  take any idempotent $e\in E(M(\tree))$ and $g\in c(e)$ and prove
  that $eg=e$. We will do that by controlling $e$ separately on $L(e)$
  and its complement $U(e)$. Take $t\in \Omega(\tree)$. By
  Corollary~\ref{constantconvexhull} $e$ is constant on $L(e)$ and so
  $egt$ and $et$ coincide on $L(e)$ (cf.~Remark~\ref{rightideal}). Thus, it just remains to
  prove that $egt$ and $et$ also coincide on $U(e)$, that is,
  $egt\equiv_{U(e)} et$. First note that by the definition of $L(e)$, if $\source_v\in c(e)$, then $v\notin U(e)$.

\smallskip

  \noindent \textbf{Case 1}: $g$ is of the form $\source_v$ or $\tau_v$ with $v\notin U(e)$. Lemma~\ref{lemma.congruence}
  then yields $gt\equiv_{U(e)} t$, and therefore $et\equiv_{U(e)} et$.

\smallskip

  \noindent \textbf{Case 2}: $g$ is of the form $\tau_v$ with $v\in U(e)$. Here we use the same trick as
  in the usual proof of $\JJ$-triviality of a decreasing
  order-preserving monoid of transformations. Namely, using that $\tau_v\in
  c(e)$, write $e=h\tau_v h'$. Note that any expression of $h$ and $h'$ as a product of generators contains no
  $\source_u$ with $u\in U(e)$ by the definition of $c(e)$ and $L(e)$~\eqref{equation.L}. This implies by
  Lemma~\ref{lemma.congruence} that $h,h',\tau_v$ preserve $\dom_{U(e)}$ and are decreasing on $U(e)$. Therefore,
  we can conclude with:
  \begin{displaymath}
    et = e^2t = e h\tau_vh't \dom_{U(e)} e\tau_vt \dom_{U(e)} et\,.\qedhere
  \end{displaymath}
\end{proof}

\subsection{Eigenvalues}
\label{subsection.eigenvalues}
Our goal is to compute the eigenvalues for the \landslide{} using Theorem~\ref{Rtrivialwalk}.
To each $S\subseteq V$, we associate an idempotent
  \begin{equation}\label{defineidempotentS}
    e_S:=\left(\prod_{v\in S}\tau_v\right)^{\omega}.
  \end{equation}
  Since $J(\tree)$ is $\mathscr J$-trivial, the resulting idempotent
  is independent of the order in which the product is taken.  Note
  that $e_\emptyset$ is the identity, whereas $e_V$ sends all of
  $\Omega(\tree)$ to the zero vector.

The reader should recall the definition of the lattice associated to an $\RR$-trivial monoid in Section~\ref{subsection.monoid}.
Note that $S_1\subseteq S_2$ if and only if $e_{S_2}e_{S_1} = e_{S_2}$, if and only if $[e_{S_1}]\geq [e_{S_1}]$.  Thus the
lattice $\Lambda(J(\tree))$ associated to $J(\tree)$ is isomorphic to the lattice of subsets of $V$ ordered by reverse inclusion.

\begin{prop}
  The lattice $\Lambda(M(\tree))$ of idempotents of $M(\tree)$
  coincides with that of $J(\tree)$.
\end{prop}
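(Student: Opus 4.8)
The plan is to attach to every idempotent $e\in E(M(\tree))$ the subset
\[
S(e):=L(e)\cup\{v\in V\suchthat \tau_v\in c(e)\}\subseteq V,
\]
and to prove that $[e]=[e_{S(e)}]$; this exhibits $S\mapsto [e_S]$ as a surjection of $\Lambda(J(\tree))$ onto $\Lambda(M(\tree))$. (As a reality check, $S(\id)=\emptyset$, $S(e_{S_0})=S_0$, and $S(\source_v^{\omega})=v^{\downarrow}$.) The injectivity and order-compatibility are for free: the relation ``$ef=e$ and $fe=f$'' defining $\LL$-equivalence, and the order ``$[a]\le[b]\iff ab=a$'', are intrinsic to the idempotents involved and do not change when we pass from the submonoid $J(\tree)$ to $M(\tree)$. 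Combined with the identity $e_{S_2}e_{S_1}=e_{S_2}\iff S_1\subseteq S_2$ recorded before the statement, this shows that $S\mapsto [e_S]$ is an order-reflecting injection of $\Lambda(J(\tree))\cong(2^{V},\supseteq)$ into $\Lambda(M(\tree))$, so the whole proposition reduces to surjectivity, i.e.\ to the equality $[e]=[e_{S(e)}]$ for each idempotent $e$. Writing $S=S(e)$, I would establish the two inequalities $[e]\le[e_S]$ and $[e_S]\le[e]$ separately.

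For $[e]\le[e_S]$, i.e.\ $e\,e_S=e$, since $e_S=(\prod_{v\in S}\tau_v)^{\omega}$ it suffices to prove $e\tau_v=e$ for every $v\in S$. If $\tau_v\in c(e)$, this is exactly the criterion for $\RR$-triviality recalled in Section~\ref{subsection.monoid}: as $M(\tree)$ is $\RR$-trivial (Theorem~\ref{isRtrivial}), one has $eg=e$ for every $g\in c(e)$. If instead $v\in L(e)$, then $v\notin U(e)$, so Lemma~\ref{lemma.congruence}(3) gives $\tau_v t\equiv_{U(e)} t$ for all $t$; since every generator preserves $\dom_{U(e)}$ by Lemma~\ref{lemma.congruence}(1)--(2), so does the product $e$, whence $e(\tau_v t)\equiv_{U(e)} e(t)$. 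On the complementary downset $L(e)$ the two sides agree because $e$ is constant there by Corollary~\ref{constantconvexhull}. Agreement on $U(e)$ and on $L(e)$ gives $e\tau_v=e$, and hence $e\,e_S=e$.

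For $[e_S]\le[e]$, i.e.\ $e_S e=e_S$, I would write $e$ as a product of generators, each of which lies in $c(e)$, so that it suffices to prove $e_S g=e_S$ for every $g\in c(e)$. If $g=\tau_v$ then $v\in S$, so $\tau_v\in c(e_S)$ and, since $J(\tree)$ is $\JJ$-trivial hence $\RR$-trivial (Lemma~\ref{Jtrivial}), $e_S\tau_v=e_S$. If $g=\source_v$ then $v^{\downarrow}\subseteq L(e)\subseteq S$, and I would combine $e_S e_{v^{\downarrow}}=e_S$ (valid in $J(\tree)$ because $v^{\downarrow}\subseteq S$) with $e_{v^{\downarrow}}\source_v=e_{v^{\downarrow}}$ to get
\[
e_S\source_v=e_Se_{v^{\downarrow}}\source_v=e_Se_{v^{\downarrow}}=e_S.
\]
The intermediate identity $e_{v^{\downarrow}}\source_v=e_{v^{\downarrow}}$ holds because $e_{v^{\downarrow}}$ empties the entire root-path $v^{\downarrow}$ (flushing all its grains out at $\rootnode$) while $\source_v$ modifies only vertices of $v^{\downarrow}$, so the inserted grain is absorbed.

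This source case of the second inequality is where I expect the real work, and it is precisely where the tree structure is essential: $v^{\downarrow}$ is a single path to the root, so a grain inserted by $\source_v$ can never leave $v^{\downarrow}$ and is necessarily flushed out once all of $S\supseteq v^{\downarrow}$ is toppled. Granting the two inequalities, every idempotent satisfies $[e]=[e_{S(e)}]$, so $S\mapsto[e_S]$ is onto $\Lambda(M(\tree))$; being in addition injective and order-reflecting for reverse inclusion, it is a lattice isomorphism $\Lambda(J(\tree))\to\Lambda(M(\tree))$, which is exactly the assertion that the two lattices coincide.
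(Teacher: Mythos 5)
Your proof is correct, and it follows the paper's blueprint up to a point: the paper proves exactly your surjectivity statement as Lemma~\ref{findLequiv}, with the identical set $S(e)=L(e)\cup\{v\suchthat \tau_v\in c(e)\}$ (and, as you observe, injectivity and order-compatibility come for free, since $\LL$-equivalence and the order on idempotent classes are defined by equations among the idempotents themselves and so are insensitive to passing between $J(\tree)$ and $M(\tree)$). Where you genuinely diverge is in how the two absorption identities $e\,e_{S(e)}=e$ and $e_{S(e)}\,e=e_{S(e)}$ are verified. The paper proves both at once by comparing the idempotents directly: it shows $et\equiv_{U(e)}e_{S(e)}t$ for all $t$ (taking a high power of a word for $e$ containing every $\tau_v\in c(e)$ and applying Lemma~\ref{lemma.congruence}), deduces agreement of the products on $U(e)$, and finishes on $L(e)$ via constancy of both $e$ (Corollary~\ref{constantconvexhull}) and $e_{S(e)}$, the latter being constant with value $0$ there because $\tau_v e_{S(e)}=e_{S(e)}$ by $\JJ$-triviality. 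You instead work generator by generator: for the first identity you invoke $\RR$-triviality of $M(\tree)$ (Theorem~\ref{isRtrivial}) directly when $\tau_v\in c(e)$ --- something the paper's lemma never needs, though it is legitimately available at this point --- and for the second you dispose of the source generators through the local flushing identity $e_{v^{\downarrow}}\source_v=e_{v^{\downarrow}}$. That identity is true but is the one spot where you argue informally; to make it rigorous, note that $\tau_w e_{v^{\downarrow}}=e_{v^{\downarrow}}$ for $w\in v^{\downarrow}$ by $\JJ$-triviality of $J(\tree)$, so $e_{v^{\downarrow}}$ is constant with value $0$ at each vertex of $v^{\downarrow}$ (the paper's own one-line argument, used there for $e_{S(e)}$), while both $e_{v^{\downarrow}}$ and $\source_v$ fix every coordinate outside $v^{\downarrow}$; agreement on the two parts gives the identity. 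On balance, your route buys modularity and makes explicit that, once $\RR$-triviality is established, the lattice comparison is a purely equational computation in the two monoids; the paper's route is more self-contained, never using Theorem~\ref{isRtrivial}, and treats all generators uniformly through the dominance congruence instead of splitting into cases.
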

This is an immediate consequence of the following lemma.
\begin{lem}\label{findLequiv}
  Let $e$ be an idempotent of $M(\tree)$, and define
  \begin{displaymath}
    S(e) = L(e)\cup \{v\mid \tau_v\in c(e)\}\,.
  \end{displaymath}
  Then $e$ is $\LL$-equivalent to the idempotent $e_{S(e)}$ of
  $J(\tree)$.
\end{lem}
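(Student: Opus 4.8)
The plan is to verify directly the two defining relations of $\LL$-equivalence, namely $e\, e_{S(e)}=e$ and $e_{S(e)}\, e=e_{S(e)}$; writing $S:=S(e)$ and using that $M(\tree)$ acts faithfully on $\Omega(\tree)$, each relation can be checked as an identity of transformations. Three facts established earlier drive the argument: $M(\tree)$ is $\RR$-trivial (Theorem~\ref{isRtrivial}), so $eg=e$ for every $g\in c(e)$; the idempotent $e$ is constant on the downset $L(e)$ (Corollary~\ref{constantconvexhull}); and every generator preserves the preorder $\dom_{U(e)}$ while, for a vertex $v\notin U(e)$, fixing the $U(e)$-part, i.e. $\source_v t\equiv_{U(e)}t$ and $\tau_v t\equiv_{U(e)}t$ (Lemma~\ref{lemma.congruence}). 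Since $e$ and $e_S$ are products of generators, both preserve $\equiv_{U(e)}$. I also record that $L(e)\subseteq S$, hence $S^c\subseteq U(e)$.

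For the first relation I would show $e\tau_w=e$ for every $w\in S$, which gives $e\prod_{w\in S}\tau_w=e$ and therefore $e\, e_S=e$. If $\tau_w\in c(e)$ this is immediate from $\RR$-triviality. If instead $w\in L(e)$, then $w\notin U(e)$, so $\tau_w t\equiv_{U(e)}t$ by Lemma~\ref{lemma.congruence}(3); applying the $\equiv_{U(e)}$-preserving map $e$ shows that $e(\tau_w t)$ and $e(t)$ agree on $U(e)$, while they agree on $L(e)$ because $e$ is constant there. As $V=L(e)\sqcup U(e)$, the two transformations coincide, so $e\tau_w=e$.

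For the second relation I first establish that $e_S$ empties $S$. Because $J(\tree)$ is $\JJ$-trivial (Lemma~\ref{Jtrivial}(1)), hence both $\RR$- and $\LL$-trivial, and $\tau_w\in c(e_S)$ for $w\in S$, one has $\tau_w e_S=e_S$; read as a transformation this says $\tau_w(e_S t)=e_S t$, and since $\tau_w$ fixes a state exactly when that state is $0$ at $w$, we get $(e_S t)_w=0$ for all $w\in S$ and all $t$. Now write $e=g_1\cdots g_r$ with each $g_i\in c(e)$ (every word representing $e$ uses only letters of $c(e)$); it then suffices to prove $e_S g=e_S$ for each $g\in c(e)$. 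When $g=\tau_v$ with $v\in S$, this is $e_S\tau_v=e_S$, again from $\JJ$-triviality. When $g=\source_v$ with $\source_v\in c(e)$, we have $v^{\downarrow}\subseteq L(e)\subseteq S$, so $v\notin U(e)$ and $\source_v t\equiv_{U(e)}t$; applying the $\equiv_{U(e)}$-preserving map $e_S$ shows that $e_S(\source_v t)$ and $e_S t$ agree on $U(e)\supseteq S^c$, while both vanish on $S$ by the emptying property. Hence $e_S\source_v=e_S$, and the relation follows.

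I expect the second relation to be the main obstacle. The delicate point is the source generators $\source_v\in c(e)$: a priori $\source_v$ injects a grain that could alter the output of $e_S$ on $S^c$. The resolution is geometric — since $v$ lies in the downset $L(e)$, the operator $\source_v$ never touches the $U(e)$-part, and since $v^{\downarrow}\subseteq S$ the injected grain only travels down a path lying entirely in $S$, a region that $e_S$ annihilates. Making this rigorous requires both the congruence $\equiv_{U(e)}$ and the emptying identity $(e_S t)|_S=0$, the latter depending on the $\LL$-triviality half of the $\JJ$-triviality of $J(\tree)$.
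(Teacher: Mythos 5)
Your proof is correct, and while it uses the same raw ingredients as the paper (the splitting $V=L(e)\sqcup U(e)$, Lemma~\ref{lemma.congruence}, Corollary~\ref{constantconvexhull}, and the absorption identities in the $\JJ$-trivial monoid $J(\tree)$), it is organized along a genuinely different route. The paper first proves the intermediate functional statement $et\equiv_{U(e)}e_{S(e)}t$ for all $t$ --- via the somewhat delicate device of representing $e$ by a word containing every $\tau_v\in c(e)$, raising it to a large power, and discarding letters at vertices outside $U(e)$ modulo $\equiv_{U(e)}$ --- and then deduces both products agree on $U(e)$, finishing on $L(e)$ by showing $e$ and $e_{S(e)}$ are each constant there (the latter with value $0$, via $\tau_v e_{S(e)}=e_{S(e)}$). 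You instead reduce each defining identity of $\LL$-equivalence to single-generator absorption: $e\tau_w=e$ for $w\in S(e)$ (by $\RR$-triviality of $M(\tree)$ when $\tau_w\in c(e)$, and by congruence-plus-constancy when $w\in L(e)$), and $e_{S(e)}g=e_{S(e)}$ for each $g\in c(e)$, using the correct observation that every letter of any word representing $e$ lies in $c(e)$. Your ``emptying'' identity $(e_{S(e)}t)_w=0$ for all $w\in S(e)$ --- extracted from $\tau_w e_{S(e)}=e_{S(e)}$ and the fact that $\tau_w$ fixes a state exactly when it is empty at $w$ --- is the same fact the paper proves but states only on $L(e)$; you exploit it on all of $S(e)$, which is what lets the $\source_v$ case go through cleanly ($e_{S(e)}\source_v t$ and $e_{S(e)}t$ agree on $U(e)\supseteq S(e)^c$ by Lemma~\ref{lemma.congruence}, and both vanish on $S(e)$). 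The payoff of your version is that it avoids the paper's terse word-power argument entirely and makes each step a checkable one-generator identity; the paper's version buys a reusable intermediate fact ($e$ and $e_{S(e)}$ act identically on the upset $U(e)$) in a single stroke.
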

\begin{proof}
Suppose that $v\in U(e)$.  Then we have that $\source_v\notin c(e)$ by definition of $L(e)$ and that $\tau_v\in c(e)$
if and only if $\tau_v\in c(e_{S(e)})$.  Thus $et\equiv_{U(e)} e_{S(e)}t$ for all $t\in \Omega$ by Lemma~\ref{lemma.congruence}
(consider a word $w$ representing $e$ containing each $\tau_v\in c(e)$ and raise it to a large power; then use the third item
of Lemma~\ref{lemma.congruence}).  Another application of Lemma~\ref{lemma.congruence} then yields
\[e_{S(e)}et\equiv_{U(e)}e_{S(e)}^2t=e_{S(e)}t\equiv_{U(e)} et=e^2t\equiv_{U(e)} ee_{S(e)}t.\]

It thus remains to show that $ee_{S(e)}t$ and $et$ (respectively, $e_{S(e)}et$ and $e_{S(e)}t$) coincide on $L(e)$.
To do this, it suffices by Remark~\ref{rightideal} to verify that $e$ and $e_{S(e)}$ are both constant at each vertex of
$L(e)$.  For $e$, this is the precisely the conclusion of Corollary~\ref{constantconvexhull}.
We claim that $e_{S(e)}$ is constant with value $0$ at each vertex of $L(e)$. Indeed, since $e_{S(e)}$ belongs to the
$\JJ$-trivial monoid $\langle\tau_v\mid v\in V\rangle$, we have that $\tau_v e_{S(e)}=e_{S(e)}$ for all $v\in L(e)$.
As $\tau_v$ is constant with value $0$ at $v$, we conclude that $e_{S(e)}$ is constant with value $0$ at all vertices
of $L(e)$ (cf. Remark~\ref{rightideal}).
\end{proof}

We now have all the ingredients to describe the eigenvalues of the
\landslide. This is achieved by computing the character of $M(\tree)$
acting on $\Omega(\tree)$ -- which boils down to counting fixed points
of idempotents -- and inverting that data using the character table --
which reduces to M\"obius inversion along $\Lambda(M(\tree))$ (i.e.
inclusion-exclusion).

\begin{proof}[Proof of Theorem~\ref{charpolysandpile2}]
Observe that the fixed point set of $e_S$ is
\begin{displaymath}
	e_S\Omega = \{t\in \Omega \mid t_v=0\ \text{for all}\ v\in S\}\,.
\end{displaymath}
For $S\subseteq V$, write $S^c$ for $V\setminus S$. Set
\begin{displaymath}
  \Upsilon_S :=\{t\in \Omega\mid t_v=0\iff v\in S\}\,.
\end{displaymath}
Note that $|\Upsilon_S| = \prod_{v\in S^c}T_v=T_{S^c}$.
Also we have that
\begin{equation}\label{disjointunion}
e_S\Omega = \biguplus_{X\supseteq S} \Upsilon_{X}\,.
\end{equation}

Recall that $\Lambda(M(\tree))$ is isomorphic to the lattice of subsets of $V$ ordered by reverse inclusion via
the mapping $[e_S]\mapsto S$.
If $\ell$ is a leaf, then $S(\source_\ell^{\omega}) = \ell^{\downarrow}$.  Thus $[\source_\ell^{\omega}]\geq [e_S]$
if and only if $\ell^{\downarrow}\subseteq S$.  On the other hand, $[\tau_v]\geq [e_S]$ if and only if $v\in S$.
It then follows from Theorem~\ref{Rtrivialwalk}
that to each subset $S\subseteq V$ there is an associated eigenvalue $\lambda_S= x_S+y_S$, where $x_S$ and $y_S$
are defined in~\eqref{equation.xy}.

Let $\boldsymbol{m}_S$ be the multiplicity of $\lambda_S$. As $[e_S]\in \Lambda(M(\tree))$ corresponds to the subset $S$,
Theorem~\ref{Rtrivialwalk} implies
\[
	|e_S\Omega|=\sum_{X\supseteq S} \boldsymbol{m}_{X}.
\]
By \eqref{disjointunion} we have
\[
	|e_S\Omega| = \sum_{X\supseteq S} |\Upsilon_X|=\sum_{X\supseteq S}T_{X^c}
\]
and so M\"obius inversion (see for example~\cite{stanley.ECII})
yields  $\boldsymbol{m}_S=T_{S^c}$ for all $S\subseteq V$, as desired.
\end{proof}

\subsection{Rate of convergence}
\label{subsection.rate of convergence}
In this section we prove Theorem~\ref{theorem.rate of convergence}.
There is a general technique, called coupling from the past, which allows one to bound the distance to stationarity for an
ergodic random walk coming from a monoid action.  Roughly speaking, it says the following. Suppose that we have a
random mapping representation of an ergodic Markov chain $\mathcal M$ with state set $\Omega$ coming from a probability
distribution $\P$ on a monoid $M$ acting on $\Omega$.  Assume furthermore that $M$ contains a constant map.  Then the
distance to stationarity after $k$ steps of $\mathcal M$ is bounded by the probability of not being at a constant map after $k$
steps of the right random walk on $M$ driven by $\P$.  More precisely, we have the following reformulation of~\cite[Theorem~3]{DiaconisBrown}.

\begin{thm}\label{thm.couplingpast}
Let $M$ be a monoid acting on a set $\Omega$ and let $\P$ be a probability distribution on $M$.
Let $\mathcal M$ be the Markov chain with state set $\Omega$ such that the transition probability from $x$ to $y$ is the
probability that $mx=y$ if $m$ is chosen according to $\P$. Assume that $\mathcal M$ is ergodic with stationary
distribution $\pi$ and that some element of $M$ acts as a constant map on $\Omega$.

Letting $P^k$ be the distribution of $\mathcal M$ after $k$ steps and $\P^k$ be the $k^{th}$-convolution power of
$\P$, we have that
\[\
	\|P^k-\pi\|\leq \P^k(M\setminus C)\;,
\]
where $C$ is the set of elements of $M$ acting as constants on $\Omega$.
\end{thm}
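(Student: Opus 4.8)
The plan is to prove this by the coupling-from-the-past argument underlying the strong stationary time method, realizing both the time-$k$ distribution $P^k$ and an exact stationary sample on a single probability space built from the driving sequence of the walk. First I would fix a sequence $m_1,m_2,\ldots$ of i.i.d.\ elements of $M$, each distributed according to $\P$, together with a starting state $x_0\in\Omega$. Running $\mathcal M$ forward applies $m_1$, then $m_2$, and so on, so that the state after $k$ steps is $m_k\cdots m_1 x_0$ and has law $P^k$. The key device is to compare this with the right random walk $V_k:=m_1 m_2\cdots m_k$ on $M$ driven by $\P$, whose law is exactly $\P^k$. Since the $m_i$ are i.i.d., reversing their order leaves the joint law unchanged, so $V_k \stackrel{d}{=} m_k\cdots m_1$ as elements of $M$; applying both to $x_0$ gives $V_k x_0 \stackrel{d}{=} m_k\cdots m_1 x_0$, whence $V_k x_0$ also has law $P^k$.

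Next I would exploit that $C$ is a right ideal of $M$: if $c\in C$ is constant with value $c_0$ and $m\in M$ is arbitrary, then $(cm)x = c(mx)=c_0$ for every $x\in\Omega$, so $cm\in C$ with the same value $c_0$. Hence once $V_k$ enters $C$ its value is frozen: setting $\tau:=\inf\{k\mid V_k\in C\}$, on $\{\tau\le k\}$ the map $V_k$ is constant with a value $V_\infty$ that no longer depends on $k$ (nor on $x_0$). Since $M$ is generated by the support of $\P$, the hypothesised constant map is a product $g_1\cdots g_r$ of support elements, and for any $V_k$ the element $V_k g_1\cdots g_r$ is again constant; thus each disjoint block of $r$ steps has a fixed positive probability of driving the walk into $C$, and ergodicity gives $\tau<\infty$ almost surely. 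Combining the distributional identity $V_k x_0 \stackrel{d}{=} m_k\cdots m_1 x_0 \to \pi$ (ergodic convergence of $\mathcal M$) with the almost-sure convergence $V_k x_0 \to V_\infty$ (the sequence is eventually constant, equal to $V_\infty$) forces the law of $V_\infty$ to be $\pi$. This identification of the frozen value $V_\infty$ as an exact stationary draw is the crux of the argument and the step I expect to be the main obstacle, since it rests squarely on the distributional reversal of the first paragraph together with the almost-sure finiteness of $\tau$.

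Finally I would invoke the coupling inequality. On the common probability space set $U:=V_k x_0$, which has law $P^k$, and $W:=V_\infty$, which has law $\pi$. On the event $\{\tau\le k\}=\{V_k\in C\}$ we have $U = V_k x_0 = V_\infty = W$, so $\{U\neq W\}\subseteq\{\tau>k\}$. The coupling inequality for total variation distance then yields
\[
\|P^k-\pi\| \le \Pr(U\neq W) \le \Pr(\tau>k) = \P^k(M\setminus C),
\]
which is precisely the desired bound.
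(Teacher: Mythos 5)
The paper offers no internal proof of Theorem~\ref{thm.couplingpast} to compare against: it is stated as a reformulation of Theorem~3 of Brown and Diaconis \cite{DiaconisBrown} and cited without proof. Your argument is essentially the standard proof of that result, and it is sound: the reversal trick (i.i.d.\ increments give $m_1\cdots m_k x_0 \stackrel{d}{=} m_k\cdots m_1 x_0$, so the right-walk image $V_kx_0$ has law $P^k$), the observation that the set $C$ of constant maps is a right ideal, so $\{\tau\le k\}=\{V_k\in C\}$ and $\Pr(\tau>k)=\P^k(M\setminus C)$ exactly, the identification of the law of the frozen value $V_\infty$ with $\pi$ by uniqueness of distributional limits (this is the only place ergodicity is genuinely used), and the coupling inequality. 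Your step ``$V_kg_1\cdots g_r$ is again constant'' silently uses that $C$ is also a \emph{left} ideal, i.e.\ $(mc)x=m(c_0)$ for $c$ constant at $c_0$; this is a one-line check but should be stated, since you only verified the right-ideal property.

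One hypothesis discrepancy needs a patch: you assume ``$M$ is generated by the support of $\P$,'' which is not among the stated hypotheses --- the theorem only posits that \emph{some} element of $M$ is constant, and that element need not be a product of support elements. The fix is a two-line case split. If $\P^{k_0}(C)>0$ for some $k_0$, then there is a word $g_1\cdots g_{k_0}$ in the support lying in $C$, and your disjoint-block argument runs verbatim with blocks of length $k_0$, giving $\Pr(\tau>jk_0)\le(1-\P^{k_0}(C))^j$ and hence $\tau<\infty$ a.s.\ (note it is this geometric-trials argument, not ergodicity as you wrote, that delivers finiteness of $\tau$). If instead $\P^k(C)=0$ for all $k$, then $\P^k(M\setminus C)=1$ and the claimed inequality holds trivially because total variation distance is at most $1$. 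Finally, your coupling is built from a fixed initial state $x_0$; since the resulting bound is uniform in $x_0$ and total variation distance is convex, it extends to an arbitrary initial distribution, which is worth a sentence. With these repairs your proof is complete, and in the paper's application the generation hypothesis holds anyway since all generators carry positive probability.
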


In the context of the \landslide{} we let $M$ be the submonoid of $M(\tree)$ generated by $\source_v$ with $v\in L$
and $\tau_v$ with $v\in V$.  We shall define a statistic $u$ on $M$ so that $u(m)=0$ if and only if $m$ is a constant
map. It follows from Theorem~\ref{thm.couplingpast} that $\|P^k-\pi\|$ is bounded by the probability $u(m)>0$.

\begin{proof}[Proof of Theorem~\ref{theorem.rate of convergence}]
Let $m\in M$.  Say that an upset $U$ of vertices is \defi{deterministic} for $m$ if $mt=mt'$ whenever $t\equiv_U t'$
for $t,t'\in \Omega$.
Notice that the set $V$ of all vertices is deterministic for $m$. Also, if $U_1,U_2$ are deterministic for $m$ and
$U_1\cap U_2=U$, then $U$ is also deterministic for $m$.  Indeed, if $t\equiv_U t'$, choose $t''\in \Omega$ such
that $t\equiv_{U_1}t''\equiv_{U_2} t'$.  This can be done because $t$ and $t'$ agree on $U=U_1\cap U_2$. Then
$mt=mt''=mt'$.  It follows that there exists a unique minimum deterministic upset $U(m)$ for $m$.  Moreover, $m$
is constant on $\Omega$ if and only if $U(m)=\emptyset$.  Define the statistic $u$ on $M$ by $u(m)=|U(m)|$.  Then
we have $0\leq u(m)\leq n$ where $n=|V|$, and $u(m)=0$ if and only if $m$ is constant. Note that $u(\id)=n$.

\smallskip
\textbf{Claim 1}: $u$ decreases along $\mathscr R$-order: $u(mm')\leq
  u(m)$ for any $m,m'\in M$. To see this, it suffices to show that $U(m)$ is deterministic for $mm'$, whence
  $U(mm')\subseteq U(m)$. If
  $t\equiv_{U(m)} t'$ then by Lemma~\ref{lemma.congruence}, $m't
  \equiv_{U(m)} m't$ and therefore $mm't=mm't'$.

\smallskip
  \textbf{Claim 2}:  Assume that $v$ is a minimal element of $U(m)$. Then
  $u(m\tau_v)<u(m)$.  It suffices to show that $U'=U(m)\setminus \{v\}$ is deterministic for $m\tau_v$.  If $t\equiv_{U'} t'$,
  then $\tau_vt\equiv_{U'} \tau_vt'$ and
  furthermore $(\tau_vt)_v=0=(\tau_vt')_v$. Therefore,
  $\tau_vt\equiv_{U(m)} \tau_vt'$ and hence $m\tau_v(t)=m\tau_v(t')$.

  \smallskip
   Let us call a step $m_i\mapsto m_{i+1}$ in the random walk on the
  right Cayley graph of $M$ \defi{successful} if either $m_i$ is constant or
  $u(m_{i+1})< u(m_i)$. Claim~1 implies that $u(m_i)=u(m_{i+1})$ if the step is not successful.
  Thus the probability that $u(m)>0$ after $k$ steps of the right random walk on $M$ is the probability of having
  at most $n-1$ successful steps in the first $k$ steps.

  Claim~2 says that each step has
  probability at least $p_x$ to be successful.  Therefore, the probability that $u(m)>0$ after $k$ steps of the right
  random walk on $M$ is bounded above by the probability of having at most $n-1$ successes in $k$ Bernoulli trials
  with success probability $p_x$.

    Using Chernoff's inequality for the cumulative distribution function of a binomial random variable we obtain
 that (see for example~\cite[After Theorem 2.1]{devroye_lugosi.2001})
  \[
  ||P^k-\pi || \le \sum_{i=0}^{n-1} {k\choose i}p_x^i(1-p_x)^{k-i}
  \leq \exp\left(-\frac{(kp_x-(n-1))^2}{2kp_x}\right)\,,
  \]
  where the last inequality holds as long as $k\ge (n-1)/p_x$.
\end{proof}

\bibliographystyle{alpha}
\bibliography{domwals}

\end{document}